\numberwithin{equation}{section}
\numberwithin{figure}{section}
\def\comment#1{}
\newcommand{\dfcn}[5]{\setlength{\arraycolsep}{1.5pt}\begin{array}{cccc}#1:&#2&\longrightarrow&#3\\&#4&\longmapsto&#5\end{array}}
\newcommand{\R}{\mathbb{R}}
\newcommand{\N}{\mathbb{N}}
\newcommand{\C}{\mathbb{C}}
\newcommand{\PP}{\mathbf{P}}
\newcommand{\Z}{\mathbb{Z}}
\newcommand{\T}{\mathbf{S}^1} %circle
\newcommand{\cE}{\mathcal{E}}
\newcommand{\cG}{\mathcal{G}}
\newcommand{\mI}{\mathcal{I}}
\newcommand{\stab}[2]{\mathrm{Stab}_{#1}(#2)}	%stabilizer
\newcommand{\eps}{\varepsilon}
\newcommand{\fhi}{\varphi}
\newcommand{\Sch}{\mathrm{Sch}}
\def\to{\mathop{\rightarrow}}
\def\dans{\mathop{\subset}}
\newcommand{\moins}{\setminus}
\newcommand{\pstar}{(\star)}
\newcommand{\Diff}{\mathrm{Diff}}
\newcommand{\NE}{\mathrm{NE}}
\newcommand{\mN}{\mathcal{N}}
\newcommand{\PSL}{\mathrm{PSL}}
\newcommand{\cD}{\mathsf{dev}}
\DeclareMathOperator{\rel}{rel}
\newcommand{\wR}{\widetilde{\mathcal{R}}}
\newcommand{\cR}{\mathcal{R}}
\newcommand{\cC}{\mathcal{C}}
\newtheorem{thm}{Theorem}[section]
\newtheorem{thmA}{Theorem}
\newtheorem*{thm*}{Theorem}
\newtheorem*{pdfn}{Proposition - Definition}
\newtheorem{lem}[thm]{Lemma}
\newtheorem{claim}{Claim}
\newtheorem*{claim*}{Claim}
\newtheorem{prop}[thm]{Proposition}
\newtheorem{cor}[thm]{Corollary}
\newtheorem*{mpconj}{``Missing Piece'' Conjecture}
\newtheorem{conj}[thm]{Conjecture}
\theoremstyle{definition}
\newtheorem{dfn}[thm]{Definition}
\newtheorem{notation}[thm]{Notation}
\newtheorem{conv}[thm]{Convention}
\theoremstyle{remark}
\newtheorem{rem}[thm]{Remark}
\newtheorem{ex}[thm]{Example}
\title{
Groups with infinitely many ends acting analytically on the circle
}
\date{}
\author{\begin{tabular}{ccc}
S\'ebastien Alvarez &  Dmitry Filimonov &
Victor Kleptsyn \\
Dominique Malicet & Carlos Meni\~{n}o & Andr\'es Navas \\ &
 Michele Triestino &
\end{tabular}
}
\begin{document}

\maketitle
\selectlanguage{french}
\begin{center}
\emph{Dedi\'e \`a \'Etienne Ghys \`a l'occasion de son 60\` eme anniversaire}
\end{center}

\selectlanguage{english}
\begin{abstract}
This article is inspired by two milestones in the study of non-minimal group actions on the circle: Duminy's theorem about the number of ends of semi-exceptional leaves, and Ghys' freeness result in real-analytic regularity. Our first result concerns groups of real-analytic diffeomorphisms with infinitely many ends: if the action is non expanding, then the group is virtually free. The second result is a Duminy type theorem for minimal codimension-one foliations: either non-expandable leaves have infinitely many ends, or the holonomy pseudogroup preserves a projective structure.
\end{abstract}

\tableofcontents

\section{Foreword and results}

The projective linear group $\PSL(2,\R)$ is the main source of inspiration for understanding groups of circle diffeomorphisms. Although not as huge as $\Diff_+(\T)$ -- it is only a three-dimensional Lie group versus an infinite dimensional group -- it is a good model to study several important aspects of subgroups of $\Diff_+(\T)$.

To begin, recall that $\PSL(2,\R)$ naturally acts on the circle $\T$ viewed either as the projective real line $\R\mathbf{P}^1$ or as the boundary of the hyperbolic plane. This action is clearly real analytic, thus we can see $\PSL(2,\R)$ as a subgroup of the group $\Diff^\omega_+(\T)$ of orientation-preserving real-analytic circle diffeomorphisms.

Several works have already described ``non-discrete'' (more precisely, \emph{non locally discrete}) subgroups of $\Diff^\omega_+(\T)$, if not thoroughly, at least in a very satisfactory way (see Ghys \cite{proches}, Shcherbakov \textit{et al.}~\cite{shcherbakov}, Nakai \cite{nakai}, Loray and Rebelo \cite{rebelo-ergodic,loray-rebelo,rebelo-transitive}, Eskif and Rebelo \cite{eskif-rebelo}, etc.). Morally, they resemble \emph{non-discrete} subgroups in $\PSL(2,\R)$, in the sense that, because of the presence of the so-called \emph{local flows}, their dynamics approximate \emph{continuous} dynamics (see \S~\ref{ssc:dynamics}).

A decade ago or so, some of the authors, in collaboration with Bertrand Deroin, started a systematic study of \emph{locally discrete} subgroups of $\Diff_+^\omega(\T)$  \cite{DKN2009,FK2012_C_eng,DKN2014,FKone,tokyo}. They introduced an auxiliary property, named $\pstar$ (and $(\Lambda\star)$, but we do not make a distinction here), under which groups behave roughly like \emph{Fuchsian groups}, \textit{i.e.}~discrete subgroups of $\PSL(2,\R)$. Informally speaking, property $\pstar$ requires that the action is  \emph{non-uniformly hyperbolic}: points at which hyperbolicity is lost must be \emph{parabolic} fixed points (or more generally the fixed point of some element with derivative~$1$). This is indeed the case for non-elementary Fuchsian groups (see Example~\ref{ex:fuchsian}). 

Starting from this, the final purpose of the aforementioned works is to show that property $\pstar$ is always satisfied. Conjecturally, it should be satisfied even for groups of $C^2$ circle diffeomorphisms, as $C^2$ is the lowest regularity setting where one always disposes of tools of control of affine distortion. However, the attention should be focused first on real-analytic actions, where arguments are often less technical.

The progresses obtained so far ensure property $\pstar$ by relating the dynamics with the algebraic structure of the group. The program proceeds by distinction of the number of \emph{ends} of the group. Extending the previous work \cite{DKN2014} on \emph{virtually free groups} (\textit{i.e.}~groups containing free subgroups of finite index), our first main result proves that property $\pstar$ holds for groups with \emph{infinitely many ends}:

\begin{thmA}\label{mthm:1}
Let $G$ be a finitely generated, locally discrete subgroup of $\mathrm{Diff}^\omega_+(\T)$. If $G$ has infinitely many ends, then 
it satisfies property $\pstar$, and it is virtually free.
\end{thmA}

Our second result goes in the reverse direction: property $\pstar$ determines the structure of the group. As we already mentioned, classical examples of locally discrete groups with property $\pstar$ are Fuchsian groups. Similarly, one can consider \emph{finite central extensions} of Fuchsian groups (\textit{i.e.}~discrete subgroups of a $k$-fold cover $\PSL^{(k)}(2,\R)$ of $\PSL(2,\R)$). A discrete group $\Gamma\subset \PSL^{(k)}(2,\R)$ is \emph{cocompact} if the quotient $\PSL^{(k)}(2,\R)/\Gamma$ is compact.
Cocompact discrete groups have only one end.

\begin{thmA}\label{mthm:2}
Let $G$ be a finitely generated, locally discrete subgroup of $\mathrm{Diff}^\omega_+(\T)$ satisfying property~$\pstar$. Then
\begin{itemize}
\item either $G$ is $C^\omega$-conjugate to a finite central extension of a cocompact Fuchsian group
\item or it is virtually free.
\end{itemize}
\end{thmA}

An exhaustive description of virtually free, locally discrete subgroups of $\mathrm{Diff}_+^\omega(\T)$ will be the object of a forthcoming work \cite{markov-partition-vfree}. The first possibility in Theorem~\ref{mthm:2} is actually due to Bertrand Deroin (Theorem~\ref{t:Deroin2}).

\paragraph{Motivations --}
Recall that if a group $G$ acts (continuously) on the circle $\T$ and there is no finite orbit, then the group  admits a unique \emph{minimal invariant compact set}, which 
can be the whole circle or a Cantor set. In the latter case, one says that the action has an \emph{exceptional} minimal set.
The most interesting dynamics takes place on this minimal set. For example, any semi-conjugacy restricts to a conjugacy on minimal sets, so that one can think of it as the ``incompressible'' part of the dynamics, from the \emph{topological} point of view. Is it the same from the \emph{measure-theoretical} point of view?

For this, notice that the notion of ergodicity can be naturally extended to transformations with \emph{quasi-invariant} measures (as for example the Lebesgue measure for any $C^1$ action) as saying that any $G$-invariant subset of the circle has either full or zero Lebesgue measure. Now, going back to the 80s, it was observed by Shub and Sullivan \cite{SS} that \emph{expanding} actions of subgroups $G\subset \Diff^{1+\alpha}_+(\T)$ have nice \emph{ergodic} properties: if the action is minimal then it is also ergodic with respect to the Lebesgue measure, whereas if the action has an exceptional minimal set~$\Lambda$, then the Lebesgue measure of $\Lambda$ is zero and the complementary set $\T\setminus\Lambda$ splits into finitely many distinct orbits of intervals (or \emph{gaps}). An analogous result was known for $\Z$ actions by $C^2$ circle diffeomorphisms: in case of minimality (which, according to 
Denjoy's theorem, is equivalent to that nontrivial elements have irrational rotation number \cite{denjoy}), the action is Lebesgue ergodic (this was independently proven by Katok \cite{KH} and Herman \cite{herman}).

One of the motivations for studying local flows for non locally discrete groups (see for instance \cite{rebelo-ergodic}) was to extend the method of Katok and Herman to more general actions. Indeed, the group generated by a minimal circle diffeomorphism $f$ is the most natural example of a non-discrete group (and thus non locally discrete): if $(q_n)$ is the sequence of denominators of the rational approximations of the rotation number of~$f$, then the sequence $f^{q_n}$ tends to the identity in the $C^1$ topology (see \cite[Ch.~VII]{herman} and also \cite{NT}).

One of the key ingredients behind the aforementioned results is the technique of control of the affine distortion of the action (highly exploited throughout this paper as well). In the 70-80s, this suggested the conjecture that the picture above should hold as soon as control of distortion can be sought (see~\cite{Schweitzer}).

\begin{conj}[Ghys, Sullivan]\label{conj:GS}
	Let $G\subset \Diff^2_+(\T)$ be a finitely generated subgroup whose action on the circle is minimal. Then the action is also Lebesgue ergodic.
\end{conj}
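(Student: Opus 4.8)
The plan is to argue by contradiction, producing a $G$--invariant measurable set $A\subset\T$ with $0<\mathrm{Leb}(A)<1$ and deriving a contradiction; since the Lebesgue measure is quasi-invariant under any $C^1$ action, this is exactly the negation of ergodicity in the sense of the Remark above. The driving mechanism is the classical \emph{Lebesgue density point} argument coupled with the control of affine distortion available in class $C^2$. I would fix a density point $x_0$ of $A$ and a density point $y_0$ of its complement $A^c$, both of which exist almost everywhere by Lebesgue's theorem. The whole point is then to transport the density of $A$ near $x_0$ (which tends to $1$) onto a fixed neighbourhood of $y_0$ by group elements of uniformly bounded distortion, contradicting the fact that $A^c$ has density $1$ at $y_0$.

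Concretely, I would look for a sequence $g_n\in G$ and neighbourhoods $I_n\ni x_0$ shrinking to $x_0$ such that each $g_n$ maps $I_n$ diffeomorphically onto a fixed interval $J\ni y_0$ (such maps exist by minimality, since the orbit of $x_0$ accumulates on $y_0$). The key requirement is a \emph{uniform} distortion bound
\[
\sup_{a,b\in I_n}\left|\log g_n'(a)-\log g_n'(b)\right|\;\le\;C_0
\]
with $C_0$ independent of $n$. Granting this, the push-forward by $g_n$ distorts Lebesgue measure by a factor at most $e^{C_0}$, so from $\mathrm{Leb}(A\cap I_n)/\mathrm{Leb}(I_n)\to 1$ and the $G$--invariance $g_n(A\cap I_n)=A\cap J$ one gets $\liminf_n \mathrm{Leb}(A\cap J)/\mathrm{Leb}(J)\ge e^{-C_0}>0$, which is incompatible with $y_0$ being a density point of $A^c$. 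The uniform bound itself I would extract from the standard $C^2$ estimate, which controls the total variation of $\log g_n'$ on $I_n$ by a constant times the sum of the lengths of the intervals visited by $I_n$ along a decomposition of $g_n$ into generators; bounded distortion is therefore equivalent to the summability of these intermediate lengths.

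The summability of the intermediate lengths is where expansion enters, and this is the crux. When the action is \emph{expanding} ($\NE=\emptyset$) a compactness argument yields uniform expansion along orbits, so the backward iterates of $J$ shrink geometrically towards $x_0$; their lengths form a convergent geometric series and $C_0$ is bounded --- this recovers the Shub--Sullivan ergodicity recalled in the text. The genuine difficulty is the \emph{non-expanding} minimal case, $\NE\neq\emptyset$: near a non-expandable point the derivatives are $\le 1$, the dynamics is at best parabolic, and the backward orbit of $J$ may linger arbitrarily long in such a slow region, so the series of intermediate lengths threatens to diverge and the distortion estimate degenerates.

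To close this case I would try to route the zooming elements $g_n$ so as to control the time spent near non-expandable points, exploiting that in the model situation singled out by property $\pstar$ a non-expandable point behaves like a parabolic fixed point, through which the projective (and hence, in class $C^2$, the affine) distortion accumulated in a single passage is finite. The strategy is a renewal/pigeonhole selection: among the many ways of realising the zoom $I_n\to J$, pick those whose backward itinerary makes only a bounded number of excursions into the slow regions, each of bounded depth, so that the parabolic contributions to the distortion sum stay uniformly bounded. The main obstacle is precisely to make these parabolic contributions \emph{uniformly} (not merely individually) bounded as the zoom sharpens: polynomial expansion through a cusp contributes a distortion that a priori grows with the penetration depth, and turning this into a uniform constant seems to require additional structure --- for instance a Markov partition of the minimal set reducing the dynamics to a finite alphabet with finitely many cusps, as predicted by the Main Conjecture. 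It is exactly the absence of such control through non-expandable points in the bare $C^2$ category that keeps the Ghys--Sullivan conjecture open, and any proof will have to supply it.
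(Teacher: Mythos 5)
The statement you were asked to prove is not a theorem of the paper: it is the Ghys--Sullivan \emph{conjecture} (Conjecture~\ref{conj:GS}), which the paper records as open and does not prove. The paper only notes where it is known: for expanding actions, by the Shub--Sullivan distortion technique, and for groups with property $\pstar$, after \cite{DKN2009,FK2012_C_eng}, where the finiteness of $\NE$ yields a Markov partition with a non uniformly expanding return map through which the density-point argument can be pushed. There is therefore no proof in the paper to compare yours against.

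Your proposal is accordingly not a proof, and you say so yourself; judged as a map of the problem it is accurate and consistent with the paper's discussion. The density-point/bounded-distortion scheme is the classical mechanism; your expanding case is correct (uniform backward contraction makes the intermediate lengths a geometric series, hence a uniform distortion constant $C_0$); and you identify the right obstruction: when $\NE\neq\emptyset$ the backward itinerary of the zoom can linger arbitrarily long near a non-expandable point, the intermediate lengths need not be uniformly summable, and nothing in the bare $C^2$ hypothesis supplies that control. This is exactly what property $\pstar$ buys in the known partial result: finitely many non-expandable points, each isolated as a topologically repelling fixed point of specific elements, which is what makes the Markov partition of Theorem~\ref{t:Markov} (and hence uniform control of the passages through the parabolic regions) possible. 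Two smaller points. First, your conclusion $\liminf_n \mathrm{Leb}(A\cap J)/\mathrm{Leb}(J)\ge e^{-C_0}$ for a \emph{fixed} interval $J$ does not by itself contradict $y_0$ being a density point of $A^c$; you need either to shrink $J$ to $y_0$ with $C_0$ uniform in $J$, or, better and standard, to note that the same distortion bound forces the density of $A$ in $J$ to tend to $1$ (push forward the complement: $\mathrm{Leb}(g_n(A^c\cap I_n))/\mathrm{Leb}(J)\le e^{C_0}\,\mathrm{Leb}(A^c\cap I_n)/\mathrm{Leb}(I_n)\to 0$), so that $A$ has full measure in $J$, and conclude by minimality. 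Second, your phrase suggesting that under $\pstar$ the conjecture would follow from ``the Markov partition predicted by the Main Conjecture'' understates what is already known: under $\pstar$ the Markov partition exists unconditionally (Theorem~\ref{t:Markov}) and Conjecture~\ref{conj:GS} is a theorem in that case; the open ground is precisely minimal $C^2$ actions with $\NE\neq\emptyset$ for which $\pstar$ is not known.
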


\begin{conj}[Ghys, Sullivan; Hector]\label{conj:GSH}
	Let $G\subset \Diff^2_+(\T)$ be a finitely generated subgroup whose action on the circle has an exceptional minimal set $\Lambda$. Then the Lebesgue measure of $\Lambda$ is zero, and the complementary set $\T\setminus\Lambda$ splits into finitely many orbits of intervals.
\end{conj}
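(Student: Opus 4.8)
The plan is to organise the argument along the expanding/non-expanding dichotomy that structures the whole paper, applied to the relevant set $\NE\cap\Lambda$. When $\NE\cap\Lambda=\emptyset$, the action is expanding along $\Lambda$ and both conclusions are, in essence, the content of the Shub--Sullivan theorem \cite{SS} recalled above: the control of the affine distortion available in class $C^{1+\alpha}$ (hence a fortiori in $C^2$) forces $\Lambda$ to have zero Lebesgue measure and forces $\T\moins\Lambda$ to decompose into finitely many orbits of intervals. So the entire difficulty is concentrated in the case $\NE\cap\Lambda\neq\emptyset$, where the expanding arguments break down precisely at the non-expandable points.

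In that remaining case I would try to reduce the problem to a finite combinatorial model. The key intermediate step is to establish property $\lstar$: that for every $x\in\NE\cap\Lambda$ there exist $g_\pm\in G$ having $x$ as an isolated fixed point in $\Lambda$ from the right, resp.~from the left. Granting $\lstar$, the number of non-expandable points in $\Lambda$ is finite, and the construction of Deroin--Filimonov--Kleptsyn--Navas \cite{DKN2009,FK2012_C_eng} yields a Markov partition of $\Lambda$ (Theorem~\ref{t:Markov}). From such a partition both assertions should follow. For the gaps, every connected component of $\T\moins\Lambda$ is a pullback, along the finitely many Markov branches, of one of the finitely many ``first-generation'' gaps sitting inside the partition pieces, so there can be only finitely many orbits of intervals. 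For the measure, the Markov dynamics is eventually expanding away from the finitely many (parabolic-type) non-expandable points; combining a distortion estimate along admissible words with Sacksteder's hyperbolic fixed point \cite{Sacksteder} one runs the usual density-point argument, pushing a Lebesgue density point of $\Lambda$ by a bounded-distortion branch onto a fixed fundamental interval that contains a gap of definite relative size, which contradicts density arbitrarily close to one and forces $|\Lambda|=0$.

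The main obstacle is establishing property $\lstar$ in class $C^2$. In analytic regularity this is automatic, since by Ghys' theorem (Theorem~\ref{thm:Ghys}) the group is virtually free and then $\lstar$ holds by \cite{DKN2014}; this is exactly the route behind Theorems~\ref{mthm:1} and~\ref{mthm:2}. In class $C^2$ neither virtual freeness nor analytic rigidity is at hand, so one must argue directly in the spirit of the Duminy-type dichotomy of Theorem~\ref{t:duminy}: studying the one-sided holonomy at a point of $\NE\cap\Lambda$, one would show that failure of $\lstar$ produces diffeomorphisms converging nontrivially to the identity, hence local flows in the $C^1$-closure by Proposition~\ref{p:localvf}, contradicting local discreteness, unless the holonomy preserves a projective structure. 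The serious difficulty is that this control is on the \emph{projective} distortion, which classically costs three derivatives, precisely the $C^3$-versus-$C^2$ gap flagged after Theorem~\ref{t:duminy}. Bridging that gap, or replacing the projective-distortion estimate by an affine one valid in $C^2$, is the heart of the remaining problem and is what keeps the statement at the level of a conjecture.
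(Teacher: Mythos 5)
The statement you were handed is Conjecture~\ref{conj:GSH}: it is an \emph{open conjecture}, and the paper contains no proof of it, so there is no internal argument to compare yours against. What the paper does assert is exactly the conditional picture you reconstructed: properties $\pstar$ and $\lstar$ were introduced in \cite{DKN2009} precisely so that the Shub--Sullivan technique could be extended, through the Markov partition of \cite{DKN2009,FK2012_C_eng} (Theorem~\ref{t:Markov} and its $\lstar$ analogue), to prove Conjectures~\ref{conj:GS} and~\ref{conj:GSH} for groups satisfying those properties. Your treatment of the expanding case $\NE\cap\Lambda=\emptyset$ via \cite{SS}, your reduction of the general case to establishing $\lstar$, and your remark that $\lstar$ is automatic in analytic regularity (Ghys' Theorem~\ref{thm:Ghys} together with \cite{DKN2014}) but unknown in class $C^2$, all agree with the paper's own discussion. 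So your proposal is not a proof --- as you yourself say --- but it correctly locates both the known reductions and the genuinely open step, which is the right outcome here: a blind ``proof'' of this statement that claimed completeness would necessarily be wrong.

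One technical caveat on the route you sketch for attacking $\lstar$ in class $C^2$: Proposition~\ref{p:localvf} (the Loray--Rebelo--Nakai--Shcherbakov argument) is stated for \emph{real analytic} diffeomorphisms, and analyticity is essential to its proof; it is not available in $C^2$, so the ``local flows in the $C^1$-closure'' horn of your proposed dichotomy would need an entirely different mechanism, on top of the projective-versus-affine distortion issue you already flag. This only reinforces your conclusion that the $C^2$ case is out of reach by the paper's methods, but it means the obstacle is even more basic than the three-derivatives cost of the Schwarzian: the very alternative (local discreteness versus local flows) that drives \S\ref{s:mthma} and \S\ref{s:duminy} is an analytic-regularity phenomenon.
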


Property $\pstar$ was indeed introduced in \cite{DKN2009} as a property under which these conjectures can be established by somewhat standard techniques. 
%Roughly, as we already mentioned, from the set $\NE\cap\Lambda$ of non-expandable points it is possible to define an expansion procedure.
More precisely, as done in \cite{FK2012_C_eng}, one defines Markov partition of the minimal set, with a  non-uniformly expanding map encoding the dynamics of $G$ (see \S~\ref{sc:Markovp}). This allows to extend the technique of Shub and Sullivan and prove the Conjectures~\ref{conj:GS} and~\ref{conj:GSH} for groups with property $\pstar$.

\paragraph{State of the art --} It is strongly believed that property $\pstar$ holds for any (finitely generated) subgroup of $\Diff^\omega_+(\T)$. 
Although property $\pstar$ is always satisfied by non locally discrete groups, whether it holds or not is a challenging question for locally discrete groups. 
This has already been verified for certain classes of groups: virtually free groups \cite{DKN2014} and finitely presented one-ended groups of bounded torsion \cite{FKone}. Theorem \ref{mthm:1} enlarges this list. 
In the real-analytic framework, we are still left with one class of groups.

\begin{mpconj}
	Let $G\subset \Diff^\omega_+(\T)$ be a finitely generated, one-ended subgroup. Assume that $G$ is neither finitely presented nor 
	has a sequence of torsion elements of unbounded order. Then $G$ cannot be locally discrete.
\end{mpconj}

For a brief summary, see also Table~\ref{table:summary}. This simplified conjecture needs further comments. Our impression is that if any counter-example existed, it should be very pathological. The feeling is that a locally discrete subgroup of $\Diff_+^\omega(\T)$ should be \emph{Gromov-hyperbolic}. Finitely generated Gromov-hyperbolic groups are always finitely presented and have bounded torsion (see \cite[Ch.~III.$\Gamma$]{bridson-haefliger}). Even if we are still not able to prove Gromov-hyperbolicity for general locally discrete groups, this has been done in one particular case:

\begin{thm}[Deroin]
	Let $G\subset \Diff^\omega_+(\T)$ be a locally discrete, finitely generated subgroup whose action on the circle is minimal and expanding. Then $G$ is Gromov-hyperbolic.
\end{thm}

The theorem above is actually an intermediate step for a much stronger result, which suggests that locally discrete subgroups of $\Diff^\omega_+(\T)$ are strongly related to Fuchsian geometry:

\begin{thm}[Deroin]\label{t:Deroin2}
	Let $G\subset \Diff^\omega_+(\T)$ be a locally discrete, finitely generated subgroup whose action on the circle is minimal and expanding. Then $G$ is analytically conjugate to a finite central extension of a cocompact Fuchsian group.
\end{thm}
These results appear in \cite{Deroin}. In the statements, \emph{expanding} means that for every $x\in\T$ there exists $g\in G$ such that $g'(x)>1$. 

The interested reader may consult the survey \cite{tokyo} for getting an idea of the landscape growing around the study of locally discrete groups.

\begin{table}[ht]
	\begin{center}
		\begin{tabular}{|c|c|c|c|}
			\hline
			\textbf{infinitely many ends} & \textbf{two ends} & \textbf{one end} & \textbf{one end}\\
			& & \textbf{expanding} & \textbf{non expanding}\\
			\hline
			virtually free  & virtually $\Z$ & finite central extension of a & conjectured to be \\
			&& cocompact Fuchsian group & impossible\\
			(Theorem \ref{mthm:1}) & (Corollary \ref{cor:finite-orbits}) & (Theorem~\ref{t:Deroin2}) & (partial result in \cite{FKone})\\
			\hline
		\end{tabular}
	\end{center}
	\caption{Classification of locally discrete subgroups of $\Diff_+^\omega(\T)$.}\label{table:summary}
\end{table}

\paragraph{Outline of the paper --}
In Sections \ref{s:basic} and \ref{s:technical} we introduce the main definitions and objects, which are both of dynamical and group-theoretical nature. The proof of Theorem~\ref{mthm:1} is worked out in Section~\ref{s:mthma}, combining dynamics with Bass-Serre theory. On the other hand, the proof of Theorem~\ref{mthm:2} is more involved, and is obtained by joining two intermediate results, here Theorems~\ref{t:duminy} and \ref{t:ends_germs}, together with Theorem~\ref{mthm:1}. Theorem~\ref{t:duminy} is discussed in Section~\ref{s:thmc}, and the proof is a nice interplay between geometry and dynamics. Section~\ref{s:thmd} is devoted to the proof of Theorem~\ref{t:ends_germs}, which is definitely the most technical part of this paper.

%%%%%%%%%%%%%%%%%%%%%%%%%%%%%%%%%%%%%

\section{Basic definitions and preliminaries}
\label{s:basic}

Let us introduce the main ingredients, which are both of dynamical and group-theoretical nature. This will be also the occasion for a better discussion on the background.

\subsection{Ends and groups}

\begin{dfn}
	Let $X$ be a connected topological space. Let $(K_n)_{n\in\N}$ be an increasing sequence 
	of compact subsets $K_n\subset X$, such that the union of their interiors covers $X$. An \emph{end} of $X$ is a decreasing sequence
	\[
	\cC_1\supset\cC_2\supset\ldots\supset \cC_n\supset\ldots,
	\]
	where $\cC_n$ is a connected component of $X\setminus K_n$.
	We denote by $e(X)$ the \emph{space of ends} of $X$; it does not depend on the choice of $(K_n)$.
\end{dfn}

Note the cardinality of $e(X)$, called the \emph{number of ends} of $X$, is the least upper bound, possibly infinite, for the number of unbounded connected components of the complementary sets $X\setminus K$, where $K$ runs through the compact subsets of $X$.

The space of ends carries a natural \emph{topology}:
an open subset $V$ in $X$ induces an open subset in $e(X)$ given by the subset of sequences $(\cC_n)$ so that $\cC_n\subset V$ for all but finitely many $n$.
Also the topology does not depend on the choice of $(K_n)$.
For nice topological spaces (connected and locally connected) the space of ends defines a compactification of $X$.

\begin{dfn}
	A sequence of points $(x_n)$ in $X$ \emph{goes to an end} if for every compact subset $K\subset X$, there exist $n_0$ and a connected component $\mathcal C$ of $X\setminus K$ such that $x_n\in \mathcal C$ for all $n\ge n_0$.
\end{dfn}

If $G$ is a group generated by a finite set $\mathcal G$, we define the space of ends $e(G)$ of $G$ to be the space of ends of the \emph{Cayley graph} of $G$ relative to $\mathcal G$. This is the graph whose vertices are the elements of $G$, and two elements $g,h\in G$ are joined by an edge if $g^{-1} h\in\mathcal G$. The graph metric induces the \emph{length metric} on $G$ given by the following expression:
\[d_\mathcal G(g,h)=\min\{\ell\mid g^{-1}h=s_1\cdots s_\ell,\,s_j\in \mathcal G\cup\mathcal G^{-1}\}.\]
The \emph{length of an element} $g \in G$ is defined as $\|g\| = d_{\mathcal{G}} (id,g)$. We will denote as usual by $B(n)=\{g\in G\mid \|g\|\le n\}$ the ball of radius $n$ centered at the identity.
Here a \emph{graph} will always be a one-dimensional  complex, endowed with any metric which is compatible with the graph metric defined on vertices.

It is a classical fact \cite[\S~8.30]{bridson-haefliger} that the space of ends, and hence the number of ends, of a group does not depend on the choice of the finite generating set (this easily follows from the fact that Cayley graphs associated with different finite generating systems are bilipschitz equivalent). Moreover, the number of ends does not change when passing to finite extensions or finite-index subgroups.
Furthermore, finitely generated groups can only have $0$, $1$, $2$ or infinitely many ends. Groups with $0$ or $2$ ends are not of particular interest: they are respectively finite or virtually infinite cyclic, \textit{i.e.}~they contain $\Z$ as a finite index subgroup (we refer to \cite[\S~8.32]{bridson-haefliger} for further details). Although they represent a broader class, groups with infinitely many ends may also be algebraically characterized, according to the celebrated Stallings' theorem \cite{stallings} (due to Bergman \cite{bergman} in the case of torsion). Before stating it, we recall two basic operations on groups.

\begin{dfn}
	Let $G_1$ and $G_2$ be two groups, and denote by $\rel G_i$ the set of relations in $G_i$. 
	Let $Z$ be a group which embeds in both $G_1$ and $G_2$ via morphisms $\phi_i:Z\hookrightarrow G_i$, $i=1,2$. The \emph{amalgamated product} $G_1*_ZG_2$ of $G_1$ and $G_2$ over the group $Z$ is defined by the presentation
	\[
	\langle G_1,G_2\mid \rel G_1,\rel G_2\text{ and }\phi_1(z)=\phi_2(z)\text{ for every }z\in Z\rangle.
	\]
\end{dfn}
Amalgamated products arise, for example, in the classical van Kampen theorem.
It is clear that if $G_1$ and $G_2$ are finitely generated, then any amalgamated product $G_1*_ZG_2$ is also finitely generated. Conversely, 
if $Z$ and $G_1*_ZG_2$ are finitely generated, then $G_1$ and $G_2$ are also finitely generated.

\begin{dfn}
	Let $H$ be a group and $Z$ another group that embeds in two (possibly equal) ways into $H$ via morphisms $\phi_i:Z\hookrightarrow H$, $i=1,2$. The \emph{HNN extension} $H*_Z$ of $H$ over $Z$ is defined by the presentation
	\[
	\langle H,\sigma \mid \rel H,\text{ and }\phi_1(z)=\sigma \phi_2(z)\sigma^{-1}\text{ for every }z\in Z\rangle.\]
	The generator $\sigma$ is usually called the \emph{stable letter} of the extension.
\end{dfn}

The most basic examples are the Baumslag-Solitar groups $\mathrm{BS}(m,n)=\langle t,\sigma\mid t^n=\sigma t^m\sigma^{-1}\rangle$, which correspond to HNN extensions 
of the type $\Z*_\Z$ (here the embeddings $\phi_i:\Z\hookrightarrow \Z$ are the multiplications by $m$ and~$n$, respectively).

From an algebraic point of view, an HNN extension $H*_Z$ is isomorphic to the semi-direct product of $\Z$ (generated by $\sigma$) and a bi-infinite chain of amalgamated products of copies of $H$. 
As before, if $H$ is finitely generated, then any HNN extension $H*_Z$ is also finitely generated. Conversely, if $Z$ and $H*_Z$ are finitely generated, then $H$ is also finitely generated.
We refer the reader to~\cite{serre,baumslag} for more details.

Proper HNN extensions and amalgamated products different from $\Z_{2k}*_{\Z_k}\Z_{2k}$ have infinitely many ends. The converse is also true:

\begin{thm}[Stallings]\label{thm:stallings}
	Let $G$ be a finitely generated group with infinitely many ends. Then $G$ is either an amalgamated product $G_1*_ZG_2$ over a finite group $Z$ (different from $G_1$ and $G_2$) or an HNN extension $H*_Z$ over a finite group $Z$ (different from $H$).
\end{thm}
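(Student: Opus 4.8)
The plan is to produce a nontrivial action of $G$ on a simplicial tree with \emph{finite} edge stabilizers and then read off the decomposition from Bass--Serre theory. The first step is to recast the hypothesis on ends in combinatorial terms. Call a subset $A\subseteq G$ \emph{almost invariant} if its coboundary in the Cayley graph $\Gamma$ — the set of edges joining $A$ to its complement — is finite, equivalently if the symmetric difference $A\,\triangle\,Ag$ is finite for every $g\in G$; call it \emph{proper} if both $A$ and $G\setminus A$ are infinite. Counting unbounded connected components of complements of large balls in $\Gamma$, one checks that $e(G)\ge 2$ if and only if $G$ admits a proper almost invariant subset, and these sets (modulo finite perturbation) are the combinatorial avatars of the ends. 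Thus from the hypothesis we obtain at least one proper almost invariant set $A$ with finite coboundary $\delta A$.

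The second, and by far the hardest, step is to upgrade a single such $A$ into an invariant \emph{tree}. Following Dunwoody, one works with $G$ acting freely and cocompactly on a connected $G$-complex built from $\Gamma$, and seeks a $G$-invariant family of pairwise disjoint \emph{cuts} (codimension-one subsets, the higher-dimensional analogues of $\delta A$) whose complementary pieces are the vertices of the sought tree. Two features are needed: each cut must be a \emph{finite} subcomplex — which it is, being modelled on the finite coboundary $\delta A$ — and the family must be \emph{nested}, so that cutting produces a genuine tree rather than a more complicated graph. The finiteness of the edge groups is then automatic and is the crux of the finiteness of $Z$: since $G$ acts freely, the setwise stabilizer of a finite cut embeds in the permutation group of its finitely many cells with trivial kernel, hence is finite. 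Achieving nestedness for a $G$-finite family is where all the real work concentrates: one runs a minimal-complexity (least number of squares, or least energy) argument to replace an arbitrary invariant family by a nested one, and it is exactly here that Dunwoody's accessibility enters. Stallings' original proof packages the same content axiomatically through the notion of a \emph{bipolar structure} on $G$, shown to be equivalent to a splitting; either way, this construction is the main obstacle and the deep part of the theorem.

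The final step is formal. Let $T$ be the resulting $G$-tree; passing to its barycentric subdivision we may assume the action is without inversions. The action has no global fixed point, because $A$ is proper: the two sides of the cut are both infinite, so $G$ cannot fix a single vertex. By the reasoning above all edge stabilizers are finite. Collapsing the quotient graph of groups $G\backslash T$ to a single orbit of edges leaves either a segment, yielding an amalgamated product $G_1*_Z G_2$, or a loop, yielding an HNN extension $H*_Z$; in both cases $Z$ is a finite edge stabilizer. Finally, properness of $A$ forbids either vertex group from being all of $G$, which is precisely the nondegeneracy condition $Z\neq G_1,G_2$ (respectively $Z\neq H$) in the statement. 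The heavy lifting is entirely in the construction of $T$; the translation of ends into almost invariant sets and the passage from the tree to the splitting are routine.
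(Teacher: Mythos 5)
The paper does not prove this statement: Stallings' theorem is quoted as a classical black box (with a pointer to the literature), so there is no internal proof to compare yours against. Judged on its own terms, your proposal is an accurate roadmap of the standard modern (Dunwoody-style) proof rather than a proof: the reduction of ``$e(G)\ge 2$'' to the existence of a proper almost invariant set, the finiteness of edge stabilizers via the free action on a complex with finite cuts, and the Bass--Serre translation of a $G$-tree into an amalgam or HNN extension are all correctly handled and genuinely routine. But the entire mathematical content of the theorem sits in the step you explicitly defer --- producing a $G$-invariant \emph{nested} family of finite cuts (equivalently, Stallings' bipolar structure) from a single almost invariant set --- and naming that step is not the same as carrying it out. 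As written, the argument establishes ``Stallings' theorem follows from Dunwoody's nestedness construction,'' which is true but circular as a proof of Stallings' theorem.

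Two smaller points. First, you attribute the nestedness step to ``Dunwoody's accessibility'': that is a different theorem (termination of iterated splittings); the relevant tool here is his minimal-track (least-complexity) argument, which you do gesture at but should not conflate with accessibility. Second, the nondegeneracy $Z\neq G_1,G_2$ (resp.\ $Z\neq H$) follows from the absence of a global fixed point for the action on $T$, which in turn must be traced back to the properness of $A$ through the construction of $T$; your one-line justification assumes the cut in $T$ still ``is'' $A$, which is only true after the nestedness step has been done carefully. Neither point is fatal, but both live inside the part of the proof you have not written.
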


Given  a finitely generated group $G$ with infinitely many ends, we shall call \emph{Stallings' decomposition} any possible decomposition of $G$ as an amalgamated product or as an HNN extension over a finite group. 

\medskip

In the second part of this work we study the geometry of orbits. To this extent, we recall the notion of \emph{Schreier graph}, which is nothing but the generalization of 
Cayley graphs to group actions.

\begin{dfn}
	Let $G$ be a finitely generated group acting on a space, let $\mathcal{G}$ be a finite generating set and $X$ an orbit for the action. The \emph{Schreier graph} of the orbit $X$, denoted by $\Sch(X,\mathcal G)$, is the graph whose vertices are the elements of $X$, and two vertices $x,y\in X$ are joined by an edge if there exists $s\in\mathcal G$ such that $s(x)=y$. The graph metric on $X$ is induced by the length metric on $G$:
	\[d^X_{\mathcal G}(x,y)=\min\left \{d_{\mathcal G}(id,g)\mid g(x)=y\right \}.\]
\end{dfn}

\begin{rem}
\label{r:indep_generators}
As for Cayley graphs, the space of ends of Schreier graphs, and hence the number of ends, do not depend on the choice of the finite generating set.
\end{rem}
We will simply write $e(X)$ instead of $\Sch(X,\mathcal G)$ for the space of ends (this is justified by the remark above). However, a Schreier graph might not have the same number of ends as the Cayley graph, even in nice cases like faithful actions, with ``small'' point stabilizers. For example, Thompson's group $T$ is one-ended, it acts on the circle by $C^\infty$ diffeomorphisms \cite{thompson}, and there are Schreier graphs associated with this action that have infinitely many ends, as Duminy's theorem (Theorem~\ref{Duminy}) guarantees.

\smallskip

Finally, we introduce a graph structure for the \emph{groupoid of germs} $G_{x_0}$ defined at a point~$x_0$. Fix a finite generating set $\mathcal G$ for $G$. Recall that two diffeomorphisms $f$ and $g$ define the same \emph{germ} at a point $x_0$ if there exists a neighbourhood $U$ of $x_0$ such that the restrictions of $f$ and $g$ to $U$ coincide. In the following, we identify a germ with any diffeomorphism representing it. The germs usually do not define a group, but rather a groupoid. For our purposes, it is enough to consider $G_{x_0}$ simply as a metric space as follows:
$G_{x_0}$ is formed by all the germs defined at $x_0$ and equipped with the distance
\[
d_{\mathcal{G},x_0}(g,h)=\min\left\{\ell\in \N\,\middle\vert\, g^{-1}h\vert_U=s_1\cdots s_\ell\vert_U,\, s_j\in \mathcal G\cup\mathcal G^{-1},\text{ for some neighbourhood }U\ni x_0  \right\}.
\]
We define the connected graph $\widehat{\Sch}(X,\mathcal G)$, called the \emph{holonomy covering} of $\Sch(X,\cG)$ whose set of vertices are the elements of $G_{x_0}$, and two vertices $g,h$ are joined by an edge if there exists $s\in \cG$ such that $g^{-1}h\vert_U=s\vert_U$, for some neighbourhood $U$ of $x_0$.
As for Caley graphs and Schreier graph, the space of ends of the holonomy covering does not depend on the finite generating set, so that we can simply refer to the space of ends of the groupoid of germs.
\begin{rem}\label{r:holcover}
	The natural map $g\in G_{x_0}\mapsto g(x_0)\in X$ extends to a covering map $\widehat{\Sch}(X,\mathcal G)\to \Sch(X,\cG)$, which corresponds to the classical holonomy covering of a leaf in foliation theory. (Remark also that there is another natural covering, from the Cayley graph of $G$ to the holonomy covering.)
\end{rem}

\subsection{Dynamics}\label{ssc:dynamics}

\paragraph{Locally discrete groups of real-analytic circle diffeomorphisms --}

Let $G$ be a group acting  (continuously) on the circle $\T$, with no finite orbit, and let $\Lambda\subset \T$ be its minimal invariant set. Because of the minimality of the action on the minimal set $\Lambda$, the local dynamics around a point $x\in\Lambda$ is essentially the same as the local dynamics around any other point $y\in\Lambda$. Roughly speaking, the dynamics of $G$ on $\Lambda$ is encoded in the restriction of the action of $G$ to any open interval $I$ intersecting~$\Lambda$.

\begin{dfn}
A subgroup $G\subset \mathrm{Diff}_+^1(\T)$ is \emph{$C^1$ locally discrete} if for any interval $I\subset \T$ intersecting a minimal set, the restriction of the identity to $I$ is isolated in the $C^1$ topology among the set of restrictions to $I$ of the diffeomorphisms in $G$.

In what follows, we will simply refer to this property as \emph{locally discrete}.
\end{dfn}

Even if the previous definition is given for subgroups of $\mathrm{Diff}_+^1(\T)$, 
we focus our interest on subgroups of $\mathrm{Diff}_+^\omega(\T)$. The huge difference between $C^\omega$ and lower regularity is the following:

\begin{thm}[see Proposition~3.7 of \cite{Matsuda2009}]\label{t:hector}
Let $G\subset \Diff_+^{\omega}(\T)$ be a finitely generated, locally discrete subgroup. Then the stabilizer in $G$ of every point is either trivial or infinite cyclic.
\end{thm}

The next corollary essentially describes locally discrete groups with finite orbits. 

\begin{cor}\label{cor:finite-orbits}
Let $G\subset \Diff_+^{\omega}(\T)$ be a finitely generated, locally discrete subgroup with a finite orbit. Then $G$ is either cyclic or 
contains an index-2 subgroup which is the direct product of an infinite cyclic group with a finite cyclic group.
\end{cor}

\begin{rem}
Notice that the index-$2$ subgroup above arises when a rotation conjugates 
an element with fixed points into its inverse (as it is the case of involution $x \to -1/x$ with respect to the hyperbolic M\"obius transformation $x \to \lambda x$, 
with $\lambda \neq 1$, both viewed as maps of the circle $\T \sim \mathbb{R} \mathbf{P}^1$). Indeed, such a group $G$ is either cyclic or isomorphic to a semi-direct product $\Z\rtimes \Z_m$, where $\Z_m$ is isomorphic to $\mathrm{rot}(G)\subset\R/\Z$ (in the case of	 a finite orbit, the rotation number defines a homomorphism).
\end{rem}

Theorem~\ref{t:hector} is a consequence of a well-known result due to Hector, and we refer to it as ``Hector's lemma'' (see \cite[Th\'eor\`eme 2.9]{euler} and \cite{proches,navas2006}). Generalizing  Hector's lemma to lower regularity is a longstanding major problem in codimension-one foliations \cite[pp.~448\ndash 449]{dippolito}. It is also the major reason why our results hold in this wide generality only for subgroups of $\Diff_+^{\omega}(\T)$.

\paragraph{Non locally discrete groups of analytic circle diffeomorphisms --}
If a subgroup $G\subset \mathrm{Diff}_+^\omega(\T)$ is locally discrete, then it is also discrete (with respect to the $C^1$ topology). As a matter of fact, there is no deep reason for privileging local discreteness above discreteness: we believe that the two notions coincide, but this would be a consequence of our aimed classification. 
Indeed, appropriate dynamical tools are known only when working with (non) local discreteness.

As we mentioned at the beginning, non locally discrete groups have been studied in several works, mainly by Shcherbakov, Nakai, Loray and Rebelo.
The fundamental tool, which goes back to \cite{shcherbakov,nakai,loray-rebelo}, is the following result that establishes the existence of \emph{local flows in the local closure} of the group. We state it in the form of~\cite[Proposition 2.8]{DKN2014}:

\begin{prop}\label{p:localvf}
Let $I$ be an interval on which nontrivial real-analytic diffeomorphisms $f_k \in \Diff^\omega(I,\T)$ are defined. Suppose that the sequence $f_k$ 
converges to the identity in the $C^1$
topology on $I$, and let $f$ be another $C^\omega$ diffeomorphism having a hyperbolic fixed point
on $I$. Then there exists a (local) $C^1$ change of coordinates $\phi : I\longrightarrow [-1, 2]$ after which the pseudogroup $G$ generated by the $f_k$'s and $f$ contains
in its $C^1([0, 1], [-1, 2])$-closure a (local) translation sub-pseudogroup:
\[\overline{\big\{\phi g\phi^{-1}\vert_{[0,1]}\mid g\in G\big\}}\supset \big\{x\mapsto x+s\mid s\in [-1,1]\big\}.\]
\end{prop}

%\vspace{2cm}

Under mild assumptions, existence of elements with hyperbolic fixed points is guaranteed by the classical Sacksteder's theorem (\cite{Sacksteder}, see also \cite{conf,DKN2007,Navas2011}). 
We state a more general version (in class $C^1$) due to Deroin, Kleptsyn and Navas, inspired by a similar result of Ghys in the $C^2$ context.

\begin{thm} \label{t:sacksteder}
Let $G$ be a finitely generated group of $C^1$ circle diffeomorphisms. If $G$ admits no invariant probability measure on $\T$, 
then it contains an element that has a hyperbolic fixed point in the minimal invariant set of $G$.
\end{thm}

Observe that a group with an invariant measure either is semi-conjugate to a group of rotations or has a finite orbit. 
Joining Proposition~\ref{p:localvf} and Sacksteder's theorem together, we have that if a finitely generated group 
$G\subset \Diff_+^\omega(\T)$ acts minimally with no invariant measure and is non locally discrete, then it has local vector flows in its local closure.

\paragraph{Non-expandable points --}

The existence of local flows in the closure of a group of circle diffeomorphisms 
yields rich dynamics. For instance, the action either has a periodic orbit, or is minimal and Lebesgue ergodic \cite{rebelo-ergodic}. If, besides, 
there is no invariant probability measure, one deduces from Sacksteder's theorem that the action must be \emph{expanding}, in the following sense:

\begin{dfn}
A point $x\in\T$ is \emph{non expandable} for the action of a group $G$ of circle diffeomorphisms 
if for every $g\in G$, the derivative of $g$ at $x$ is not greater than $1$. We denote by $\mathrm{NE}=\mathrm{NE}(G)$ the set of non-expandable points of $G$.
The action of a group of circle diffeomorphisms is \emph{expanding} if $\mathrm{NE}=\emptyset$.
\end{dfn}

Since we have $\NE=\left\{x\mid g'(x)\le 1\text{ for every }g\in G\right\}=\bigcap_{g\in G}\left\{x\mid g'(x)\le 1\right\},$
the set of non-expandable points is always closed. Notice that one can define the set of non-expandable points for any group of $C^1$ circle diffeomorphisms. However, 
it is important to point out that, \emph{a priori}, the definition does not well behave under $C^1$ conjugacy: only the property $\NE=\emptyset$ is invariant under $C^1$ conjugacy. The problem is that the notion of non-expandable points is not a \emph{dynamical} one. The following definition, introduced in \cite{DKN2007}, forces a conjugacy-invariant condition (\textit{cf}.~\cite[Corollary~1.10]{DKN2007}).

\begin{dfn}[Property $\pstar$ -- $C^\omega$ case]\label{d:pstar}
Let $G\subset \mathrm{Diff}_+^\omega(\T)$ be a subgroup with no finite orbit, and let $\Lambda$ be its minimal invariant 
set. The group $G$ has \emph{property $\pstar$} if for every $x\in \mathrm{NE}\cap \Lambda$ there is $g\in G\setminus\{id\}$ 
such that $x$ is a fixed point of $g$.
\end{dfn}

\begin{ex}\label{ex:fuchsian}
In the case of Fuchsian groups, for which property $\pstar$ is satisfied, orbits of non-expandable points are geometrically interpreted as \emph{cusps}.
%(more generally, as boundary components). 
For instance, consider actions of non-uniform lattices $\Gamma$ in $\PSL(2,\R)$, that is subgroups for which the quotient $\mathbf{H}^2/\Gamma$  is not compact but has finite volume. The most classical examples are $\PSL(2,\Z)$ and its finite index free subgroups like
\[\Gamma=\left \langle
\begin{bmatrix}
1 & 2\\0&1
\end{bmatrix},\begin{bmatrix}
1 & 0\\2&1
\end{bmatrix}\right \rangle\]
(the quotient $\mathbf{H}^2/\Gamma$ is a sphere with three cusps). In these cases, the orbit of the set of non-expandable points is made of the rational numbers together with the point at infinity in $\R\mathbf{P}^1\cong \R\cup \{\infty\}$. In the quotient space $\mathbf{H}^2/\Gamma$, these points coincide with the cusps.
\end{ex}

Property $\pstar$ makes sense even for $C^1$ actions. However it turns to be a useful notion only when working with actions that are at least of class $C^2$ (as it must be combined with classical tools of control of affine distortion, as Proposition \ref{l:schwartz0}). In most issues, there is no relevant difference between $C^2$ and $C^\omega$ actions with property $\pstar$. However, the definition of property $\pstar$ in class $C^2$ is slightly more complicated, as one has to take into account that there could be elements that are the identity on some interval.

\begin{dfn}[Property $\pstar$ -- $C^2$ case]
Let $G\subset \mathrm{Diff}_+^2(\T)$ be a subgroup with no finite orbit, and let $\Lambda$ be its minimal invariant set. The group $G$ has \emph{property $\pstar$} if for every $x\in \mathrm{NE}\cap \Lambda$ there are $g_+$ and $g_-$ in $G$ such that $x$ is an isolated fixed point in $\Lambda$ from the right (resp.~from the left) for $g_+\vert_\Lambda$ (resp. $g_-\vert_\Lambda$).
\end{dfn}

Property $\pstar$, even in $C^2$ regularity, entails several strong properties for the dynamics of the group action. For a detailed discussion, the reader may consult \cite{DKN2007} or \cite[\S~3.5]{Navas2011}. Here we collect the results that are relevant to our purposes. First of all, if $\NE \neq \emptyset$, then the group is locally discrete.
Secondly, the set $\NE\cap \Lambda$ intersects only finitely many orbits (also, when $\Lambda$ is a Cantor set, there are only finitely many orbits of connected components of the complement $\T\setminus \Lambda$). This can be seen as a consequence of the work \cite{DKN2009} where an \emph{expansion procedure} was introduced, and later improved by Filimonov and Kleptsyn in \cite{FK2012_C_eng}. In this latter work the authors show that, if $\NE\neq \emptyset$, the dynamics on the minimal set can be encoded by a ``Markov'' dynamics. We will give a more precise account later in \S~\ref{sc:Markovp}, as this fact is one fundamental ingredient for the proof of Theorem~\ref{mthm:2}.

%%%%%%%%%%%%%%%%%%%%%%%%%%%%

\section{More technical preliminaries}
\label{s:technical}

In this section we develop some technical tools required for the proofs of our main results.
For Theorem~\ref{mthm:1}, will need in particular a long discussion about group acting on trees (\S~\ref{ssc:bass-serre}), while for Theorem~\ref{mthm:2} we recall the results of \cite{FK2012_C_eng} and describe the Markov partition associated with a group with property $\pstar$.
We also take the opportunity to introduce some notation for further reference.

\subsection{Groups: Basic Bass-Serre theory for amalgamated products and actions on trees}
\label{ssc:bass-serre}

In this part we recall some elementary facts about groups acting on trees. Many of these are well-known results, and we only sketch the proofs.  The main results in this section are Proposition~\ref{l:distorted} and Proposition~\ref{l:ping-pong}, and will be important for the proof of Theorem~\ref{mthm:1}.

\paragraph{Normal forms --}
Every element in an amalgamated product can be written in a \emph{normal form} (see \cite{combinatorial,serre}).

\begin{lem}\label{l:normalform}
	Fix transversal sets of cosets $T_1\subset G_1$ and $T_2\subset G_2$ for ${}_Z\backslash{}^{G_1}$ and ${}_Z \backslash^{G_2}$ respectively, both containing the identity. Then every element $g\in G$ has a unique factorization as $g= \gamma\, t_n\cdots t_1$, with $\gamma\in Z$ and $t_j\in T_{i_j}\setminus \{id\}$, with none of two consecutive $i_j$'s equal.
\end{lem}

We sketch a geometrical proof of this lemma using Bass-Serre theory \cite{serre}.
Every amalgamated product acts isometrically on a simplicial tree without edge-inversion, namely the \emph{Bass-Serre tree}, that we denote it by $X$. Bass-Serre theory holds more generally, but for an amalgamated product $G=G_1*_ZG_2$, the tree and the action on it have a simple description: the vertices are the cosets $\{G_ig\mid g\in G,i=1,2\}$, and the edges are $\{(G_1g,G_2g)\mid g\in G\}$. The group $G$ acts by \emph{right} multiplication: $G_ig.\fhi=G_ig\fhi$. The edge $e=(G_1,G_2)$ is a \emph{fundamental domain} for the action of $G$ on $X$: each factor group $G_i$ coincides with the stabilizer of the vertex $G_i$, and $Z=G_1\cap G_2$ is the stabilizer of the edge $e$.

Remark that if $(G_1g,G_2g)$ and $(G_1g',G_2g')$ represent the same edge, then we have $G_ig=G_ig'$ for $i=1,2$. We deduce that $g'g^{-1}$ belongs to the intersection $G_1\cap G_2=Z$. So $g'=\gamma g$ for some $\gamma\in Z$.

\begin{proof}[Proof of Lemma~\ref{l:normalform}]
	If an element $g\in G$ belongs to a factor group $G_i$, then there is a unique $t\in T_i$ and $\gamma\in Z$ such that $g=\gamma t$. 	
	If an element $g$ is not in a factor group, then the fundamental domain $e$ and its image $e.g$ do not intersect. Therefore, since $X$ is a tree, there is a unique geodesic path $\pi$ connecting $e$ to $e.g$.
%	 (see Figure~\ref{fig:path}). 
The path is of the form
	\[\pi=(G_{i_1}=G_{i_1}g_1,G_{i_2}g_2,G_{i_3}g_3,\ldots,G_{i_{n}}g_{n}=G_{i_{n}}g),\]
	with the $g_k$'s verifying $G_{i_k}g_{k}=G_{i_{k}}g_{k-1}$ for every $k=2,\ldots n$, and none of two consecutive $i_j$'s equal.
	From the remark above, the $g_k$'s are uniquely defined modulo $Z$. However, if the transversal sets $T_1$ and $T_2$ are given, then we can write every $g_k$ in the form
	\begin{align*}
	g_1=\,&t_1,\\
	g_2=\,&t_2t_1,\\
	\cdots&\\
	g_n=\,&t_n\cdots t_1,\text{ with  every }t_j\in T_{i_j}\setminus \{id\},
	\end{align*}
	which is unique. 
	Since $G_{i_n}g_n = G_{i_n}g$, $G_{i_{n+1}} g_{n+1} = G_{i_{n+1}} g$ 
	and $G_{i_{n+1}} g_{n+1} = G_{i_{n+1}} g_n$, the product $g g_n^{-1}=\gamma$ belongs to $Z = G_{i_n} \cap G_{i_{n+1}}$.
\end{proof}

%\begin{figure}
%	\[\includegraphics[scale=1]{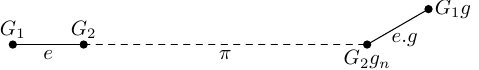}\]
%	\caption{The geodesic path in the Bass-Serre which determines the normal form. Here is an example where we have $t_1\in G_2$ and $t_n\in G_2$.}\label{fig:path}
%\end{figure}

\begin{rem}\label{r:normal}
	Consider an element $g\in G=G_1*_ZG_2$, written in normal form as $g=\gamma\,t_n\cdots t_1$. 
	Observe that if $g$ is written differently as $g=s_k\cdots s_1$ with every $s_j\in G_{i_j}\setminus Z$ and none of two consecutive $i_j$'s equal, then $k=n$, and for every $j=1,\ldots,n$, the factor $t_j$ belongs to $G_{i_j}$. Moreover every product $t_j^{-1}s_j$ belongs to $Z$.
	
	Indeed, the length $n$ is exactly the length of the geodesic path in the Bass-Serre tree $\pi$ from the edge $e=(G_1,G_2)$ to the edge $e.g=(G_1g,G_2g)$, and the indices $i_j$'s correspond to the vertices visited by the path.
	
	We also have that the inverse $g^{-1}$ can be written in a normal form of length $n$, since the geodesic path from $e$ to $e.g^{-1}$ is the translation $\pi.g^{-1}$ (with opposite orientation), of the path $\pi$ from $e$ to $e.g$.
\end{rem}

\paragraph{Tree isometries --}
Let us study in more detail the action of $G$ on its Bass-Serre tree. The reader may consult \cite{culler-morgan} for a general description of actions on (real) trees.

As in the previous proof, we denote by $X$ the Bass-Serre tree of $G$ and by $d$ the graph metric on the tree $X$. We keep the convention of right action.

\begin{dfn}
	Given an isometry $\fhi$ of the tree $(X,d)$, we denote by $\ell(\fhi)$ its \emph{translation length}:
	\begin{equation}\label{eq:translation-length}
	\ell(\fhi)=\min\{d(x,x.\fhi)\mid x\in X\}.
	\end{equation}
	If $\ell(\fhi)=0$ then $\fhi$ is \emph{elliptic}, otherwise, $\fhi$ is \emph{hyperbolic}.
\end{dfn}

Observe that the minimum in \eqref{eq:translation-length} is attained because the distance $d$ on $X$ takes discrete values when restricted to the set of vertices. In particular, we have:
\begin{lem}\label{l:conjugate_factor}
	Let $G=G_1*_Z G_2$ be an amalgamated product and let $X$ be its Bass-Serre tree. Take an element $\fhi\in G$. The following statements are equivalent: 
	\begin{enumerate}
		\item the element $\fhi$ belongs to a conjugate factor group (\textit{i.e.}~a subgroup of $G$ of the form $gG_ig^{-1}$);
		\item $\fhi$ fixes a vertex of $X$;
		\item $\fhi$ is elliptic, that is, $\ell(\fhi)=0$.
	\end{enumerate}
\end{lem}

Any tree isometry $\fhi$ has a natural \emph{invariant set} $X(\fhi)$, which is a convex subset of $X$. This is the union of the minimal invariant sets. More explicitly, for an elliptic element, $X(\fhi)$ is defined as the set of fixed points of $\fhi$. Observe that $\fhi$ fixes more than one point if and only if $\fhi$ belongs to some conjugate of the edge group $Z$.

For hyperbolic elements, the invariant set is described as follows:
\begin{lem}\label{l:dist_image}
	If a tree isometry $\fhi:X\to X$ is hyperbolic, the invariant set $X(\fhi)$ is a \emph{translation axis}, \textit{i.e.}~an invariant bi-infinite geodesic line in $X$, on which $\fhi$ acts as a translation of displacement $\ell(\fhi)$.
	Moreover, for any vertex $x\in X$, one has 
	\begin{equation}\label{eq:dist_image}
	d(x,x.\fhi)=\ell(\fhi)+2d(x,X(\fhi)).
	\end{equation}
\end{lem}

\begin{proof}[Sketch of the proof]
	Consider a vertex $x\in X$ that minimizes the translation length: $d(x,x.\fhi)=\ell(\fhi)$. We denote by $\pi=(x=x_0,x_1,\ldots,x_{\ell(\fhi)}=x.\fhi)$ the geodesic path from $x$ to $x.\fhi$ in $X$. The segments $\pi$ and $\pi.\fhi$ only intersect at $x.\fhi$,
%Indeed, since $X$ is a tree, the intersection $\pi\cap \pi.\fhi$  is connected, and if it were not a point, then the points $x_1$ and $x_1.\fhi$ would be at distance $\ell(\fhi)-2$, contradicting the minimality (see Figure~\ref{fig:axis} on the left).
%	\begin{figure}
%		\[\includegraphics[scale=1]{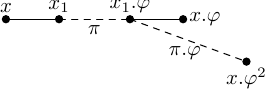}\qquad\includegraphics[scale=1]{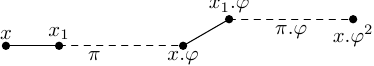}\]
%		\caption{Existence of the translation axis for a hyperbolic element.}\label{fig:axis}
%	\end{figure}
	therefore, the union 
	\begin{equation}\label{eq:axis}
	X(\fhi)=
	\bigcup_{n\in\Z}\pi.\fhi^n
	\end{equation}
	is a bi-infinite geodesic in $X$, on which $\fhi$ acts as a translation by $\ell(\fhi)$. %(see Figure~\ref{fig:axis} on the right).
	Uniqueness, and \eqref{eq:dist_image} are an easy consequence of the fact that $\fhi$ acts as a translation on $X(\fhi)$.
%	Let us prove that the translation axis is unique. First suppose that two translation axes intersect. Then the intersection is a (possibly infinite) interval that is $\fhi$-invariant.
%	Since $\fhi$ acts as a translation on the intersection, the only possibility is that both axes actually coincide. Secondly, if the intersection is empty, then the two axes must be parallel. More precisely, the set of shortest geodesic paths connecting the two axes is invariant under the isometry $\fhi$. However, such a geodesic path is unique in a tree, 
%	hence it must be fixed by $\fhi$ (see Figure~\ref{fig:unique}). This contradicts $\ell(\fhi)>0$.
%	\begin{figure}
%		\[
%		\includegraphics[scale=1]{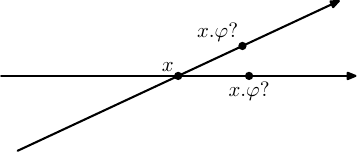}\qquad \includegraphics[scale=1]{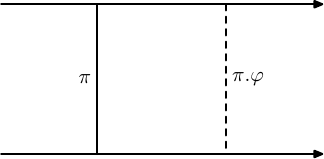}
%		\]
%		\caption{Uniqueness of the translation axis for a hyperbolic element.}\label{fig:unique}
%	\end{figure}
%	
%	Finally, let us prove \eqref{eq:dist_image}. Let $\gamma$ be the geodesic segment from $x$ to $X(\fhi)$, with endpoint $y\in X(\fhi)$ (so that $d(x,y)=d(x,X(\fhi))$). Then $\gamma.\fhi$ is the geodesic segment from $x.\fhi$ to $X(\fhi)$, with endpoint $y.\fhi$. Then we have
%	\[d(x,x.\fhi)=d(x,y)+d(y,y.\fhi)+d(y.\fhi,x.\fhi),\]
%	from which one easily deduces \eqref{eq:dist_image}.
\end{proof}

\begin{rem}\label{r:elliptic}
	The relation \eqref{eq:dist_image} holds even for an elliptic isometry $\fhi$, in which case we simply have
	\[
	d(x,x.\fhi)=2d(x,X(\fhi)).
	\]
	More precisely, if $\gamma$ is the geodesic segment from $x$ to $X(\fhi)$, with endpoint $y\in X(\fhi)$, then $\gamma.\fhi$ is the geodesic segment from $x.\fhi$ to $X(\fhi)$, with endpoint $y.\fhi=y$.
\end{rem}

The following result gives a geometric condition for detecting the position of the invariant set of a tree isometry. The proof is elementary and left to the reader.

\begin{prop}\label{p:position}
	Let $\fhi$ be a non-trivial isometry of a tree $X$, and $x\in X\setminus X(\fhi)$ a vertex.
	Let $\pi^+,\pi-$ the geodesic paths in $X$ connecting $x$ with $x.\fhi$ and $x.\fhi^{-1}$ respectively. Then $\pi^+$ and $\pi^-$ share the first edge $e$ and $X(\fhi)$ is contained in the connected component of $X\setminus \{x\}$ which contains $e$.
\end{prop}

\begin{rem}
	As it will appear clear from the proof (\textit{cf.}~also Remark~\ref{r:elliptic}), if the element $\fhi$ is elliptic, then it is enough to look at the geodesic path from $x$ to $x.\fhi$: if $\pi^+$ starts with the edge~$e$, so does~$\pi^-$.
\end{rem}

%\begin{proof}
%	Suppose first that the isometry $\fhi$ is elliptic. The statement is clearly empty if $x=x.\fhi$, so we suppose that we are not in this situation. Let $\pi^+$ be the geodesic segment connecting $x$ to $x.\fhi$. As described in Remark~\ref{r:elliptic}, the invariant set $X(\fhi)$ intersects the path $\pi^+$ exactly at its middle point.
%	
%	Suppose now that $\fhi$ is hyperbolic. Observe that our hypothesis guarantees that $x$ does not belong to the hyperbolic axis $X(\fhi)=X(\fhi^{-1})$, because otherwise the paths $\pi^-$ and $\pi^+$ would intersect only at $x$.
%	Then, as we saw in Lemma~\ref{l:dist_image}, the paths $\pi^\pm$ from $x$ to $x.\fhi^{\pm 1}$ decompose into three nontrivial pieces:
%	first, the path reaches the translation axis $X(\fhi)$, then it crosses the axis along a segment of length $\ell(\fhi)$, and finally it goes out of $X(\fhi)$ to reach the image $x.\fhi$. By uniqueness of geodesics in a tree, the first pieces for $\pi^-$ and $\pi^+$ must coincide. This gives the result.
%\end{proof}

\paragraph{Distorted elements --} First, we recall the following:
\begin{dfn}
	An element $\fhi$ of a finitely generated group $G$ is undistorted (in $G$) if the length of the element $\fhi^n$ grows linearly in $n$. 
	(Notice that this definition is invariant under quasi-isometries and in particular it does not depend on the finite generating system chosen for defining the length metric on $G$.)
\end{dfn}

In free groups all nontrivial elements are undistorted; by invariance under quasi-isometries one gets:

\begin{lem}\label{l:dist_free}
	Let $G$ be a finitely generated virtually free group. Then every element of infinite order is undistorted in $G$.
\end{lem}

We will need the following more general statement.

\begin{lem}\label{l:distorted}
	Let $G=G_1*_ZG_2$ be an amalgamated product and let $\fhi\in G$ be a distorted element in $G$. Then $\fhi$ is conjugate to an element into one of the two factors (and it is actually distorted in the conjugate factor with respect to the restricted metric).
\end{lem}

\begin{proof}
	Because of Lemma~\ref{l:conjugate_factor}, it is enough to prove that if the element $\fhi$ is hyperbolic, then it is undistorted.
	If $\fhi$ was distorted, given $x\in X(\fhi)$ the distance $d(x,x.\fhi^n)$ would have sublinear growth, but 
	$\fhi^n$ acts by translation by $n\ell(\fhi)$ on $X(\fhi)$.
\end{proof}

%\begin{proof}
%	 There is however a simpler, more classical, proof. Indeed, up to quasi-isometry, it is enough to prove the result for a group $G$ which is \emph{free}. For free groups there are many ways to see this, here we choose to give an argument relying on actions on trees. Indeed,
%	one of the first byproducts of the Bass-Serre theory is that a group is free if and only if it has a free action on a tree. So consider such a free action: every element acts as a hyperbolic isometry, so it is undistorted.
%\end{proof}

\paragraph{Ping-pong and free groups --}
Let us first give a statement about commutators in a free group:

\begin{lem}\label{l:CommPowers}
	In the rank-two free group $F_2$, consider two free generators $a$ and $b$. Define the sequence of iterated commutators
	\[
	\begin{cases}
	w_0=a,\\
	w_1=b,\\
	w_{k+2}=[w_k,w_{k+1}].
	\end{cases}
	\]
	Let $H$ be the free subgroup generated by $w_2$ and $w_3$. Given an element $h\in H$, the following property holds:
	when writing $h$ as a reduced word in the generating system $\{a^{\pm 1},b^{\pm 1}\}$, then the expression does not contain $a^{\pm 2}$ and $b^{\pm 3}$ as subwords.
\end{lem}

The following nice proof has been explained to us by Jarek K\c edra on MathOverflow.

\begin{proof}
	Every element in the commutator subgroup $[F_2,F_2]$ can be represented by an oriented closed path on the square grid $\Z^2$, starting at the origin: the letters $a,b$ are represented by edges going to the right and up, respectively.
	Since the subgroup generated by $w_2,w_3$ is contained in $[F_2,F_2]$, we can use this interpretation for any element in $H$. 
	
	In this interpretation, the element $w_2$ is represented by a simple square loop, while $w_3$ is represented by a loop describing a ``figure eight'', namely two vertically adjacent squares (see Figure~\ref{fig:loops}). 
	\begin{figure}
		\[
		\includegraphics[scale=1]{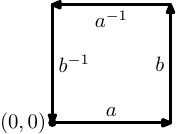}\quad\includegraphics[scale=1]{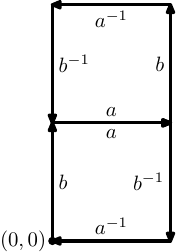}
		\]
		\caption{The paths representing the commutators $w_2$ (left) and $w_3$ (right).}\label{fig:loops}
	\end{figure}
	
	Thus every element in the group $H$ describes a closed loop that is contained in the figure eight, simply because when concatenating $w_2^{\pm 1},w_3^{\pm 1}$, the support of the loops cannot escape. In particular the reduced form for an element $h\in H$ cannot contain powers of $a^{\pm 1}$ exceeding $1$, otherwise the support of  
	the loop it represent would escape the figure eight from one of its vertical sides. Similarly we deduce that there is no power of $b^{\pm 1}$ exceeding $2$.
\end{proof}

\begin{dfn}\label{d:boundorder}
	Let $G$ be a group acting of isometries of a tree $X$. Let $\beta\in \N$ be a positive integer. We say that $G$ is $\beta$-bounded if for any isometry $\fhi\in G$ fixing an edge of $X$, then $\fhi^{n}=id$ for some $|n|\le \beta$. In other words, $\beta$ is a uniform upper bound on the order of isometries of $G$ fixing edges. 
\end{dfn}

\begin{lem}\label{l:boundorder}
	Let $G$ be a group of isometries of a tree $X$, which is $\beta$-bounded. If $\fhi\in G$ is such that there exists a positive integer $p\in \N$ such that $\fhi^{p}$ fixes an edge, then $\fhi$ has order at most $\beta p$.
\end{lem}

\begin{proof}
	It follows directly from Definition~\ref{d:boundorder} above.
\end{proof}

\begin{lem}\label{l:boundorder2}
	Let $G$ be a group of isometries of a tree $X$, which is $\beta$-bounded. Consider an isometry $\fhi\in G$ whose order is at least $5\beta$ (possibly infinite).
	
	Consider a connected component $C$ of the complement $X\setminus X(\fhi)$ of the invariant set of $\fhi$. Then for every power $p\in \{\pm 1,\ldots, \pm 4\}$, the image $\fhi^p(C)$ has empty intersection with $C$.
\end{lem}

\begin{proof}
	Since $C$ is a connected component of the complement of the invariant set $X(\fhi)$, there is a unique edge $e$ connecting $X(\fhi)$ to $C$.
	
	Suppose there is $p>0$ such that $\fhi^p\neq id$ and the intersection $\fhi^p(C)\cap C$ is not empty. The power~$\fhi^p$ must fix the edge $e$, so $\fhi^p$ fixes one edge. As $G$ is $\beta$-bounded, Lemma~\ref{l:boundorder} implies that we have $\fhi^{p\beta}=id$. Thus, by hypothesis, we must have $p\beta\ge 5\beta$. This implies $p\ge 5$.
	
	When $p<0$, considering $\fhi^{-1}$ we find similarly $p\le -5$. This ends the proof.
\end{proof}

Now we can proceed to the main result of this paragraph, which is a variation on the classical ping-pong lemma:

\begin{prop}[Ping-pong]\label{l:ping-pong}
	Let $G$ be a group acting by isometries on a tree $X$, which is $\beta$-bounded.
	Let $\fhi,\psi\in G$ be two tree isometries such that:
	\begin{enumerate}
		\item their invariant sets are disjoint,
		\item their order is at least $5\beta$ (possibly infinite).
	\end{enumerate} 
	Then $h=[\fhi,\psi]$ and $k=[\psi,[\fhi,\psi]]$ generate a free subgroup of $G$.
\end{prop}

\begin{proof}
	Let $\fhi$ and $\psi$ be two isometries with disjoint invariant sets $X(\fhi)$ and $X(\psi)$. Denote by $\pi$ the geodesic path in $X$ connecting these two sets. Let $v$ and $w$ be the vertices 
	on $\pi$ that lie on $X(\fhi)$ and $X(\psi)$ respectively.
	We consider the following two subtrees of $X$:
	\begin{enumerate}
		\item $A$ is the maximal subtree of $X$ that contains $v$ but not the rest of $\pi$;
		\item $B$ is the maximal subtree of $X$ that contains $w$ but not the rest of $\pi$.
	\end{enumerate}
	Consider an element $g$ in the group generated by $h$ and $k$. Up to cyclical rewriting (that is, up to conjugating by an element in $\langle \fhi,\psi\rangle$)\footnote{Notice that the group generated by $h,k$ is not normal, so the cyclical rewriting may transform $g$ into an element which does not belong to this group. However this has no influence on the rest of the proof.} the element $g$ decomposes as a product
	\begin{equation}\label{eq:altern}
	g=a_1b_1\cdots a_nb_n,\quad a_i\in \langle \fhi\rangle, b_i\in \langle \psi\rangle,
	\end{equation}
	which is (formally) reduced in the free group $F(\fhi,\psi)$.
	Moreover Lemma~\ref{l:CommPowers} implies that
	\[a_i\in \{\fhi^{\pm 1},\cdots \fhi^{\pm 4}\},b_i\in\{\psi^{\pm 1},\cdots,\psi^{\pm 4}\}\quad\text{for every }i=1,\ldots,n:
	\]
	indeed the lemma says initially that powers are bounded by $2$, however after a cyclical rewriting powers may increase up to $4$.
	Thus, applying Lemma~\ref{l:boundorder2}, we observe the following ping-pong dynamics:
	\[
	B.a_i\subset A\quad \text{and}\quad A.b_i\subset B\quad\text{for every }i=1,\ldots,n.
	\]
	Therefore, if we apply $g$ to $B$, we must have
	\[
	g.B\subset A.
	\]
	As $A$ and $B$ are disjoint, this implies that $g$ is not the identity in $G$.
\end{proof}

Next, we detect the translation axis of certain hyperbolic elements, as this will be needed for verifying the first condition in Proposition~\ref{l:ping-pong}.
\begin{lem}\label{l:find_axis}
	Let $G=G_1*_Z G_2$. Consider an element $\fhi\in G$ of the form
	\begin{equation}\label{eq:form_good}
	\fhi=\sigma_n t_n\sigma_{n-1} t_{n-1}\cdots \sigma_1 t_1,\quad \text{with }t_i\in G_1\setminus Z,\sigma_i\in G_2\setminus Z\text{ for every }i=1,\ldots n.
	\end{equation}
	Set $e=(G_1,G_2)$. Then $\fhi$ is hyperbolic, and its translation axis is
	\[
	X(\fhi)=\bigcup_{k\in \Z}(\pi\cup e).\fhi^k,
	\]
	where $\pi$ is the geodesic path between $e$ and the image $e.\fhi$. That is, $X(\fhi)$ is the bi-infinite geodesic path
	\begin{equation}\label{eq:form_axis}
	X(\fhi)=(\ldots,G_2t_n^{-1}\sigma_n^{-1}, G_1\sigma_n^{-1},G_2, G_1,G_2t_1,G_1\sigma_1 t_1,\ldots, G_2t_n\sigma_{n-1}\cdots t_1,G_1\fhi, G_2t_1\fhi,\ldots).
	\end{equation}
	In particular, we have $\ell(\fhi)=2n$. 
%	(See Figure~\ref{fig:form_axis}.)
\end{lem}

%\begin{figure}
%	\[\includegraphics[scale=1]{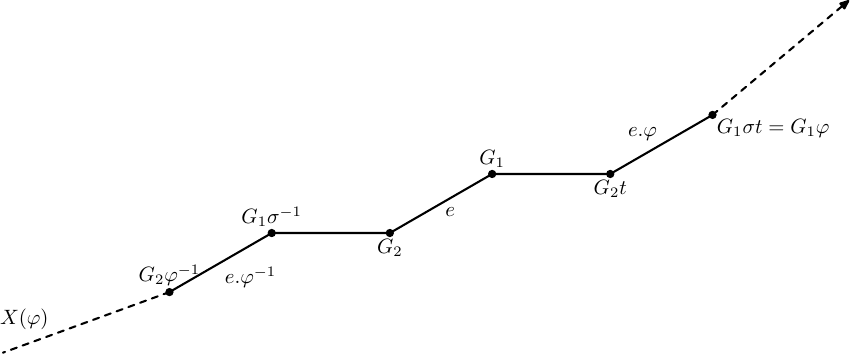}\]
%	\caption{The translation axis of an element $\fhi=\sigma t$, with $t\in G_1\setminus Z,\sigma\in G_2\setminus Z$.}\label{fig:form_axis}
%\end{figure}

\begin{proof}
	We have to prove that the path \eqref{eq:form_axis} is geodesic. That is, we have to prove that there is no backtracking, which is the same as proving that any two vertices on it are distinct. This can be verified directly from the uniqueness of the normal form (Lemma~\ref{l:normalform} and Remark~\ref{r:normal}), noticing that the normal form of a power $\fhi^k$ is
	\[
	(\sigma_n t_n\sigma_{n-1} t_{n-1}\cdots \sigma_1 t_1)\cdots (\sigma_n t_n\sigma_{n-1} t_{n-1}\cdots \sigma_1 t_1),
	\]
	with the  $(\sigma_n t_n\sigma_{n-1} t_{n-1}\cdots \sigma_1 t_1)$ repeated $k$ times.
\end{proof}

\begin{rem}\label{r:conjugate_axis}
	For any $g\in G$ and $\fhi$ of the form  \eqref{eq:form_good}, the translation axis of the conjugate $\psi=g\fhi g^{-1}$ is $X(\psi)=X(\fhi).g^{-1}$.
\end{rem}

%%%%%%%%%%%%%%%%%%%%%%%

\subsection{Dynamics: Distortion, Markov partition and expansion procedure}\label{sc:Markovp}

\paragraph{Distortion --} Let $J\subset \T$ be an interval, the \emph{distortion coefficient} of a $C^1$ diffeomorphism $g:J\to g(J)$ on $J$ is defined as
\begin{equation}\label{eq:distortion_coeff}
\varkappa(g;J)=\sup_{x,y\in J} \left \vert \log \frac{g'(x)}{g'(y)}\right \vert.\end{equation}
This measures how far is $g$ to be an affine map. Besides, this is well behaved under composition and inversion:
\[\varkappa(gh;J)\le \varkappa(g;h(J))+\varkappa(h;J),\quad\varkappa(g;J)=\varkappa(g^{-1};g(J)).\]
Suppose now that $G\subset \Diff_+^2(\T)$ is a finitely generated subgroup. If we fix a finite generating system $\mathcal G$ of the group $G$ and 
set  $C_\mathcal G=\max_{g\in \mathcal G\cup \mathcal G^{-1}}\sup_{\T} |g''/g'|$, then
\[\varkappa(g;J)\le C_{\mathcal{G}}\,|J|\quad\text{for every }g\in\mathcal G.\]
This implies that if $g=g_n\cdots g_1$ belongs to the ball of radius $n$ in $G$, $g_i\in\mathcal G$, then 
\begin{equation}\label{eq:intermediate}
\varkappa(g_n\cdots g_1;J)\le C_{\mathcal G}\sum_{i=0}^{n-1}|g_i\cdots g_1(J)|,
\end{equation}
where $g_i\cdots g_1=id$ for $i=0$.

The inequality \eqref{eq:intermediate} suggests that the control of the affine distortion of $g$ on some small interval $J$ 
can be controlled by the \emph{intermediate compositions} $g_i\cdots g_1$. This is better explained in the following way: 
Let 
\begin{equation}\label{eq:S}
S=\sum_{i=0}^{n-1}(g_i\cdots g_1)'(x_0)
\end{equation}
denote the sum of the intermediate derivatives at some \emph{single} point $x_0\in \T$. 
Then the affine distortion of $g$ can be controlled in a neighbourhood of radius $\sim 1/S$ about $x_0$. More precisely, 
we have the following statement (which goes back to A.~Schwartz \cite{Schwartz} and, later,  to Sullivan \cite{sullivan}): 

\begin{prop}\label{l:schwartz0}
	Let $G\subset \Diff_+^2(\T)$ be a finitely generated subgroup with finite generating set $\mathcal G$.
	For a point $x_0\in \T$ and $g\in B(n)$, let $S$ be as in \eqref{eq:S} and $c=\log2/4C_{\mathcal G}$. For every $r\le c/S$, we have the following bound on the affine distortion of $g$:
	\[\varkappa(g;U_{r}(x_0))\le 4C_{\mathcal G}Sr,\]
	where $U_r(x_0)$ denotes the $r$-neighbourhood of $x_0$.
\end{prop}

In the case of groups of real-analytic circle diffeomorphisms, every element is defined, by definition, on some annular complex neighbourhood of $\T$. The control of distortion in Proposition~\ref{l:schwartz0} is then extended to a complex neighbourhood of $x_0$:

\begin{prop}\label{l:schwartz}
	Let $G\subset \Diff_+^\omega(\T)$ be a finitely generated subgroup with finite generating set $\mathcal G$.
	For a point $x_0\in \T$ and $g\in B(n)$, let $S$ be as in \eqref{eq:S} and $c=\log2/4C_{\mathcal G}$. There exists $\rho>0$ such that for every $r\le \min\{c/S,\rho\}$, we have the following bound on the affine distortion of $g$:
	\[\varkappa(g;U_{r}^{\C}(x_0))\le 4C_{\mathcal G}Sr,\]
	where $U^{\C}_r(x_0)$ denotes the complex $r$-neighbourhood of $x_0$.
\end{prop}

\paragraph{Markov partition --}
We recall one result of \cite{FK2012_C_eng} in the case of minimal actions:

\begin{thm}[Filimonov, Kleptsyn]\label{t:Markov}
	\label{t:Markov2}
	Let $G\subset \Diff_+^2(\T)$ be a finitely generated subgroup whose action is minimal and with property $\pstar$. Let $\ell$ be the number of non-expandable points of $G$, and write $\NE=\{x_1,\ldots,x_\ell\}$. Then there exist a finite subset $\Delta_0\subset \T$ and a partition of $\T\setminus \Delta_0$ into finitely many open intervals
	\[\mathcal I=\left\{I_1,\ldots,I_k,I_1^+,I^-_1,\ldots,I_\ell^+,I_\ell^-\right\},\]
	an expansion constant $\lambda>1$ and elements $g_I\in G$, $I\in \mathcal I$ such that:
	\begin{enumerate}[\bf i.]
		\item for every $I\in \mathcal I$, the image $g_I(I)$ is a union of intervals in $\mathcal I$;
		\item we have $g'_I\vert_I\ge \lambda$ for every $I=I_1,\ldots, I_k$;\label{i:markov2}
		\item the intervals $I_i^+$ and $I_i^-$ are adjacent respectively on the right and on the left to the non-expandable $x_i$, which is the unique fixed point, topologically repelling, for $g_{I_i^+}$ (resp.~$g_{I_i^-}$) on the interval $I_i^+$ (resp.~$I_i^-$); moreover $x_i$ is the unique non-expandable point in $g_{I_i^\pm}(I_i^\pm)$;
		\item\label{i:markov4} for every $I=I_1^\pm,\ldots,I_\ell^\pm$, set
		\[k_I:I\longrightarrow \N\]
		to be the function $k_I(x)=\min\{k\in\N\mid g^k_{I} (x) \not\in I\}$ and
		\[j:I\longrightarrow \{1,\ldots,k\}\]
		defined by the condition $g_I^{k_I(x)}(x)\in I_{j(x)}$. Then for every $x\in I$, $\left(g_{I_{j(x)}}\circ g_I^{k_I(x)}\right)'(x)\ge \lambda$.
	\end{enumerate}
\end{thm}

\begin{rem}\label{r:keyCw}
	If we assume moreover that $G$ is in $\Diff_+^\omega(\T)$, then \ref{i:markov4} above can be reformulated as follows:
	if $k_I(x)=\min \{k\in\N\mid g_I^k(x)\notin I\}$, then for every $x\in I$ one has $\left(g_I^{k_I(x)}\right )'(x)\ge \lambda$.
	
	Indeed, as $g_I$ is a parabolic stabilizer of one of the endpoints $x_I$ (say the leftmost one) of the interval~$I$, there exist $A,B>0$ and $n\ge 1$ an integer such that
	\begin{equation}\label{eq:ParNormalForm}
	g_I(x)=x\left (1+A(x-x_I)^{n}+o((x-x_I)^n)\right )\quad \text{for every }x\in I, \text{ as }x\to x_I
	\end{equation}
	and
	\begin{equation}\label{eq:ParNormalForm2}
	g'_I(x)=1+B(x-x_I)^{n}+o((x-x_I)^n)\quad \text{for every }x\in I, \text{ as }x\to x_I.
	\end{equation}
	Therefore the derivative of $g_I$ is never less than one on a small right neighbourhood of $x_I$.
	This fact will be crucial in our proof of Theorem~\ref{t:ends_germs} and hence Theorem~\ref{mthm:2}.
\end{rem}

\begin{rem}
	It is worthwhile to observe that Theorem \ref{mthm:2} was first conjectured in \cite{FKone} as a moral consequence of Theorem~\ref{t:Markov}: the (non-uniformly) expanding maps $g_I$'s give a way to decompose the Schreier graphs of all but finitely many orbits into a finite number of trees \cite{FK2012_C_eng}, thus suggesting freeness in the structure.
\end{rem}

\paragraph{Magnification maps --} 
From now on, we fix a finite subset $\Delta_0\subset \T$, a Markov partition \[\mathcal I=\left\{I_1,\ldots,I_k,I_1^+,I^-_1,\ldots,I_\ell^+,I_\ell^-\right\},\]
an expansion constant $\lambda>1$ and elements $g_I\in G$, $I\in \mathcal I$ given by Theorem~\ref{t:Markov2}. 
We introduce a first \emph{magnification map} $\mathcal R:\T\setminus \Delta_0\to \T$ defined as
\begin{equation}\label{eq:expR}
\mathcal R\vert_I =g_I \quad\text{for any }I\in\mathcal I,
\end{equation}
and its modification $\widetilde{\mathcal R}:\T\setminus \Delta_0\to \T$  defined as
\begin{equation}\label{eq:expR2}
\widetilde{\mathcal R}\vert_I:x\in I\mapsto\begin{cases}
g_I(x)&\text{if }I\in\left \{I_1,\ldots,I_k\right \}\\ 
g_{I_{j(x)}} g_I^{k_I(x)}&\text{if }I\in \left \{I_1^{\pm},\cdots, I_\ell^{\pm}\right \}
\end{cases},\quad\text{for any }I\in\mathcal{I},
\end{equation}
which, after Theorem \ref{t:Markov2}.\ref{i:markov4} above, is \emph{uniformly expanding}: $\wR'(x)\ge \lambda$ for any $x\in\T\setminus \Delta_0$.

The following result will be very helpful during the proof of Theorem~\ref{t:ends_germs}:
\begin{lem}\label{l:keyCw}
	Assume we are under the hypotheses of Theorem~\ref{t:Markov2} and suppose moreover that $G\subset \Diff_+^\omega(\T)$. Then the magnification map $\mathcal R$ can be chosen to be everywhere expanding:
	\[
	\mathcal R'(x)>1\quad \text{for every }x\in \T\setminus \Delta_0.
	\]
\end{lem}

\begin{proof}
	The magnification map is piecewise defined by \eqref{eq:expR},
	however, it depends on the construction of the collection $\mathcal I$ in Theorem~\ref{t:Markov2}.
	The proof in \cite{FKone} starts first by fixing neighbourhoods $I_j^\pm$ of the non expandable points $\{x_1,\ldots,x_\ell\}$, then subdividing the rest of the circle into intervals $I_j$. Taking smaller neighbourhoods $I_j^\pm$ has usually the result of decreasing the expansion constant $\lambda>1$.
	
	If $I$ is one of the $I_j^\pm$, then we have seen in Remark~\ref{r:keyCw} that $\mathcal{R}\vert_I=g_I\vert_I$ is of the form \eqref{eq:ParNormalForm}, and its derivative of the form \eqref{eq:ParNormalForm2}. Hence, shrinking $I$ a little in Theorem~\ref{t:Markov2}, we may assure $(\mathcal{R})'\vert_I=g_I'\vert_I>1$.
	
	On the other hand, if $I\in \mathcal I$ is one of the $I_j$, then we already have a good expansion by construction: $\mathcal R'\vert_I\ge \lambda$ after Theorem~\ref{t:Markov2}.\ref{i:markov2}.
\end{proof}

\paragraph{Partitions of higher level --}
In order to encode the dynamics within the orbit of the set of non-expandable points, it is appropriate to define subpartitions of $\mathcal I$.

\begin{notation}\label{def:Deltak}
	We define the endpoints of the atoms of the partition of level $k$ by the following inductive procedure, starting from the set $\Delta_0$ of endpoints of atoms of the partition $\mI$.
	If $\Delta_k$ is constructed, 
	consider $\Delta_k(I)=\Delta_k\cap I$, where $I\in\mI$, so that $\Delta_k=\bigcup_{I\in\mI}\Delta_k(I)$. We distinguish two possibilities:
	\begin{itemize}
		\item if $I$ is not adjacent to a non-expandable point, set
		\[\Delta_{k+1}(I)=g_I^{-1}(\Delta_k\cap g_I(I));\]
		\item for $I\in\mI$ adjacent to one of the non-expandable points, set
		\[\Delta_{k+1}(I)=\bigcup_{j=1}^{\infty}g_I^{-j}(\Delta_k\cap(g_I(I)\moins I)).\]
	\end{itemize}
\end{notation}

\begin{dfn}
	The connected components of $\T\moins\Delta_k$ form a partition called the \emph{partition of level $k$} that we denote by $\mI_k$.
\end{dfn}

\paragraph{Expansion of a non-expandable point --} 
We start by the following result describing the orbits of non-expandable points (see for instance \cite[Lemma 3.5.14]{Navas2011}).

\begin{lem}
	\label{characNE}
	Let $G\subset \Diff_+^2(\T)$ be a finitely generated subgroup whose action is minimal and satisfies property $\pstar$. Then a point $x\in\T$ belongs to the orbit of a non-expandable point  if and only if the set $\lbrace g'(x)\mid g\in G\rbrace$ is bounded.
\end{lem}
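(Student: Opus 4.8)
The plan is to prove the two implications separately. The forward implication is an elementary chain-rule computation, while the reverse one will exploit the non uniformly expanding dynamics furnished by the Markov partition of Theorem~\ref{t:Markov2}.

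First I would treat the direction asserting that $\{g'(x) : g \in G\}$ is bounded whenever $x$ lies in the orbit of a non expandable point. Writing $x = h(x_0)$ with $x_0 \in \NE$ and $h \in G$, the chain rule gives $g'(x) = (g\circ h)'(x_0)/h'(x_0)$ for every $g \in G$; since $g\circ h \in G$ and $x_0$ is non expandable the numerator is at most $1$, so $g'(x) \le 1/h'(x_0)$, a bound independent of $g$. Hence the set of derivatives is bounded.

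For the converse I would argue by contraposition: assuming $x \notin G.\NE$, I would produce elements $g_n \in G$ with $g_n'(x) \ge \lambda^n$, forcing the set of derivatives to be unbounded. The key tool is the expanding map $T$ assembled from the Markov data of Theorem~\ref{t:Markov2}: on each regular atom $I_1, \dots, I_k$ one uses $g_I$ (with $g_I' \ge \lambda$ by property ii), and on each atom $I_i^\pm$ adjacent to a non expandable point one uses the accelerated composition $g_{I_{j(\cdot)}} \circ g_{I_i^\pm}^{k_{I_i^\pm}(\cdot)}$ provided by property iv (again with derivative $\ge \lambda$). On each atom $T$ is the restriction of an element of $G$ and satisfies $T' \ge \lambda$, so as long as $T^n(x)$ is defined there is $g_n \in G$ with $g_n(x)=T^n(x)$ and $g_n'(x) = (T^n)'(x) \ge \lambda^n$. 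Iterating $T$ from $x$, I would show the process never halts: at any endpoint $p$ of an atom other than a non expandable point, one adjacent atom $I$ has $g_I'(p) \ge \lambda$ by continuity of the bound in property ii, so the iteration continues; the only genuine obstruction is landing on some $x_i \in \NE$, where no element of $G$ expands (every $g$ satisfies $g'(x_i)\le 1$). But if $T^j(x) = x_i$ then $x_i = g(x)$ for the corresponding $g \in G$, contradicting $x \notin G.\NE$. Hence the iteration runs forever and the derivatives $g_n'(x) \ge \lambda^n$ are unbounded.

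The main obstacle I anticipate is the bookkeeping at the boundary of the partition: I must verify carefully that the assembled map $T$ continues through every atom endpoint except the non expandable points themselves (in particular that the two atoms adjacent to each $x_i$ are precisely the non-expanding atoms $I_i^\pm$, so $T$ is undefined exactly at $\NE$), so that the sole way perpetual $\lambda$-expansion can fail is by the orbit of $x$ meeting $\NE$, which then places $x$ in the orbit $G.\NE$.
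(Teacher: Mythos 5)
Your proof is correct and takes essentially the route the paper intends: the forward implication is the chain rule at a non expandable point, and the converse is the non-halting of the expansion procedure assembled from the Markov partition of Theorem~\ref{t:Markov2}, which is exactly the ``tool of the proof'' the paper describes (deferring the details to \cite[Lemma 3.5.14]{Navas2011}). Your boundary bookkeeping --- that the iteration can only stop at a point of $\NE$, so for $x\notin G.\NE$ the derivatives grow at least like $\lambda^n$ --- is precisely the point that makes the contrapositive work.
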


One of our main tools is a process of expansion that we describe below. Assume that $x\in G\cdot\NE$. There exists $k(x)\in\N\cup\{\infty\}$ and a sequence of $k(x)$ points $(x_i)_{i=0}^{k(x)}\subset G\cdot \NE$, that we call the \emph{expansion sequence} of $x$ and is defined recursively as follows. First, set $x_0=x$. Now assume that $x_i$ has been constructed. Then there exists $I\in\mI$ such that $x_i\in\bar{I}$ (if $x_i$ is one of the endpoints of $I$, one can always require that it is the left one). Then we have three \emph{mutually exclusive} possibilities:

\begin{itemize}
	\item if $x_i\in\NE$, then the procedure stops and $k(x)=i$;
	\item if $I$ is not adjacent to a non-expandable point, we set $x_{i+1}=g_{i+1}(x_i)$, where $g_{i+1}=g_I$;
	\item if one endpoint of $I$ is a non-expandable point we set $x_{i+1}=g_{i+1}(x_i)$, where $g_{i+1}=g_{I_{j(x_i)}} g_I^{k_I(x_i)}$. Here $k_I$ and $j$ are the numbers defined in Theorem \ref{t:Markov2}.
\end{itemize}
In other words, if the point $x_i$ is not non-expandable, we set $x_{i+1}=\widetilde{\mathcal R}(x_i)$, where $\widetilde{\mathcal R}$ is the expanding magnification map introduced at \eqref{eq:expR2}.

If the procedure never stops we can set $k(x)=\infty$, though it turns out that this possibility never occurs:

\begin{prop}
	\label{Expprocedure}
	Let $G\subset \Diff_+^2(\T)$ be a finitely generated subgroup whose action is minimal, satisfies property $\pstar$ and such that $\NE\neq\emptyset$. Let $x\in G\cdot \NE$. Then the following assertions hold true.
	\begin{enumerate}[\bf i.]
		\item There exists a finite integer $k=k(x)$, called the \emph{level} of $x$, such that the procedure stops after $k$ steps. 
		
		\item \label{ass:level}
		Let $\mathbf g_x$ denote the composition $g_{k}\, g_{k-1}\cdots g_1$ (locally equal to $\wR^k$).
		By construction $\mathbf g_x(x)=x_k$ belongs to $\NE$ and is the leftmost point of some $I^+_{j(x_k)}$. Define the interval $J_x^+=\mathbf g_x^{-1}(I^+_{j(x_k)})$, whose leftmost point is $x$. Then there exists a number $\kappa=\kappa(x)\geq k$ such that  $J_x^+$ is an atom of $\mI_{\kappa}$, the partition of level $\kappa$.

		\item \label{ass:dist} There exists a constant $C_0>0$ which does not depend on $x\in G\cdot\NE$ such that the distortion coefficient verifies the inequality $\varkappa(\mathbf g_x,J_x^+)\le~C_0$.
	\end{enumerate}
\end{prop}

\begin{proof}
	We observe that the expanding property of the magnification map $\wR$ imply that the derivatives of the compositions $g_{j}\, g_{j-1}\cdots g_1=\wR^j$ are always larger than $\lambda^j$. Since $x\in G\cdot\NE$, by Lemma \ref{characNE},  $(\wR^j)'(x)$ has to be bounded. This is possible if and only if the expansion procedure described above stops at some step $k$.
	
	That the intervals $J_x^+$ are atoms of the partition of some level $\kappa$ is clear from the definition of the two procedures.
	
	The map $\mathbf g_x$ is precisely the \emph{expansion map} $\wR^{k(x)}$ in restriction to $J_x^+$, in the sense of \cite[Definition~7]{FK2012_C_eng}. Thus, the third assertion follows from \cite[Proposition 2]{FK2012_C_eng} and because the size of the intervals $g_j\cdots g_1(J_x^+)=\wR^j(J_x^+)$ is uniformly bounded from below.
\end{proof}

\begin{lem}
	\label{distortionlem}
	With hypotheses and notations as in Proposition~\ref{Expprocedure}, the following assertions hold true.
	\begin{enumerate}[\bf i.]
		\item The family $(J^+_x)_{k(x)=k}$ consists of disjoint intervals.
		\item There exists a constant $C>1$ which does not depend on $x\in G\cdot \NE$ such that
		\[\frac{C^{-1}}{|J^+_x|}\leq \mathbf g_x'(x)\leq \frac{C}{|J^+_x|}.\]
	\end{enumerate}
\end{lem}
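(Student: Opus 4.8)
The plan is to derive both statements from the properties of the expansion map $\mathbf g_x$ recorded in Proposition~\ref{Expprocedure}, treating the (easy) metric estimate \textbf{ii} first and then the disjointness \textbf{i}, which is the combinatorial heart of the matter.

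For assertion \textbf{ii} I would argue purely by bounded distortion. Since $\mathbf g_x$ maps $J_x^+$ diffeomorphically onto one of the finitely many atoms $I_j^+$, and there are only $\ell$ such atoms, their lengths lie in a fixed interval $[\delta,1]$ with $\delta=\min_j|I_j^+|>0$ independent of $x$. By the mean value theorem there is a $\xi\in J_x^+$ with $\mathbf g_x'(\xi)=|I_j^+(x_k)|/|J_x^+|$, while the uniform distortion bound $\varkappa(\mathbf g_x,J_x^+)\le C_0$ from assertion \textbf{iii} of Proposition~\ref{Expprocedure} gives $e^{-C_0}\le \mathbf g_x'(x)/\mathbf g_x'(\xi)\le e^{C_0}$. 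Combining the two facts yields $\mathbf g_x'(x)\asymp 1/|J_x^+|$ with a multiplicative constant $C=e^{C_0}/\delta$ depending only on $C_0$ and $\delta$, which is exactly the claim.

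For assertion \textbf{i} the key point is that the expansion procedure producing $\mathbf g_x$ is \emph{deterministic}: at each step the element $g_{i+1}$ is dictated solely by the atom of $\mI$ (and, next to a non expandable point, by the values of the functions $k_I$ and $j$ of Theorem~\ref{t:Markov2}) containing the current point $x_i$. I would first observe that $J_x^+=g_1^{-1}\cdots g_k^{-1}(I_j^+(x_k))$ is contained in the single level-$0$ atom of $\mI$ containing $x$, and more precisely that the partial pullbacks $g_{i+1}^{-1}\cdots g_k^{-1}(I_j^+)$ are nested inside the regions on which the combinatorial data at step $i$ are constant. This is precisely how the higher-level endpoint sets $\Delta_k$ were defined in \S\ref{sc:Markovp}, via $g_I^{-1}(\Delta\cap g_I(I))$ away from non expandable points and via $\bigcup_{j\ge 1}g_I^{-j}$ next to them. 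Then, assuming $y\in J_x^+$ with $k(y)=k(x)=k$, I would show by induction on the step that $y$ visits the same atoms as $x$, so that $y$ has the same expansion word, $\mathbf g_y=\mathbf g_x$, and hence $J_y^+=J_x^+$; since these intervals have left endpoints $y$ and $x$ respectively, this forces $y=x$, and disjointness of distinct level-$k$ intervals follows.

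The main obstacle is controlling the induction across the steps adjacent to a non expandable point, where $g_{i+1}=g_{I_{j(x_i)}}\,g_I^{k_I(x_i)}$ depends on the integer $k_I(x_i)$: I must check that any $y_i$ lying in the pulled-back interval $g_{i+1}^{-1}\cdots g_k^{-1}(I_j^+)$ has the \emph{same} escape time $k_I(y_i)=k_I(x_i)$ and lands in the same atom $I_{j(y_i)}=I_{j(x_i)}$. This is guaranteed because that pulled-back interval sits inside a single atom of the partition of level $\kappa$, hence inside one of the pieces $g_I^{-m}(g_I(I)\setminus I)$ on which $k_I$ is constant; making this inclusion precise is the only delicate point, and it is exactly the reason the partitions of higher level were constructed as they were in \S\ref{sc:Markovp}.
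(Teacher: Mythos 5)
Your proposal is correct and follows essentially the same route as the paper: assertion \textbf{ii} is exactly the bounded-distortion argument that the paper compresses into ``directly follows from Proposition~\ref{Expprocedure}.\ref{ass:dist}'', and assertion \textbf{i} rests on the same two facts the paper uses, namely that the expansion words of $x$ and of any $y\in J_x^+$ coincide until one of the two procedures stops, and that $I^+_{j(x_k)}$ contains no non expandable point other than its left endpoint $x_k$. The only cosmetic difference is in the endgame of \textbf{i} --- the paper shows that a strict nesting $J_y^+\subsetneq J_x^+$ forces $k(y)>k(x)$, whereas you derive $y=x$ directly from the assumption $k(y)=k(x)$ --- and the delicate point you flag (constancy of the escape times $k_I$ on the pulled-back intervals, guaranteed by the construction of the partitions of higher level) is precisely the step the paper itself dispatches in a single sentence.
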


\begin{proof}
	By Proposition \ref{Expprocedure}.\ref{ass:level}, each interval $J_x^+$ is an atom of some partition of level $\kappa(x)$. This implies that two different intervals $J_x^+$ either are disjoint, or one is contained into the other.
	
	Assume for example that $J_x^+$ contains $J_y^+$ for some $x,y\in G\cdot \NE$. Then we claim that $k(x)<k(y)$. Indeed, the maps $g_i$ defined by the expansion procedure of $x$ and $y$ must coincide at least before the procedure stops for $x$. It stops for $x$ when $i=k$, and $x=x_k$. Then $\mathbf g_x(y)=y_k$ lies strictly inside $I^+_{j(x_k)}$, which contains no non-expandable point. Hence, the expansion procedure of $y$ must continue after the $k$-th step, and we have $k(x)<k(y)$ as desired.
	
	The second assertion directly follows from Proposition \ref{Expprocedure}.\ref{ass:dist}.
\end{proof}

In the final part of the proof of Theorem~\ref{t:ends_germs}, we will also need a second important result from~\cite{FK2012_C_eng}, which is presented  as a ``Structure Theorem''. It says that elements of $G$, upon magnification, are constructed from finitely many bricks.

\begin{thm}[Structure Theorem \cite{FK2012_C_eng}]\label{t:thompsonlike}
	Let $G\subset \Diff_+^2(\T)$ be a finitely generated subgroup whose action is minimal, with property $\pstar$ and such that $\NE\neq \emptyset$. Let $\Delta_0$ and $\mathcal{I}$ be the finite set and partition of $\T\setminus \Delta_0$ given by Theorem~\ref{t:Markov}, with the associated expanding maps $g_I$'s. There exists a finite number of intervals $L_1,\ldots L_N,L_1',\ldots L'_N\subset \T$ and finitely many elements $h_1,\ldots, h_N\in G$ defining diffeomorphisms $h_i:L_i\to L_i'$, such that any element $g\in G$ admits the following representation:
	\begin{enumerate}[\bf i.]
		\item there exist a finite subset $\Sigma_g\subset \T$ containing $\Delta_0\cup g^{-1}(\Delta_0)$, and a partition of $\T\setminus \Sigma_g$ into intervals $J_1,\ldots, J_q$ (which depends on $g$);
		\item for any $p=1,\ldots,q$ there exist intervals $L_{i_p}$, $L'_{i_p}$ in the expansion sequences of the intervals $J_p$ and $g(J_p)$ respectively. In other words for some $n_p,n_p'$ one has
		\[
		\cR^{n_p}(J_p)=L_{i_p},\quad \cR^{n_p'}(g(J_p))=L_{i_p}';
		\] 
		\item The map $g$ equals $h_{i_p}$ under magnification:
		\begin{equation}\label{eq:magnification}
		\cR^{n_p'}g\vert_{J_p}=h_{i_p}\cR^{n_p}\vert_{J_p}.
		\end{equation}
	\end{enumerate}
	Moreover, the finite subset $\Sigma_g$ and the partition $J_1,\ldots, J_q$ can be chosen to be the same for any finite subset of $G$.
\end{thm}

\begin{rem}\label{rem:thompsonlike}
	In the original statement in \cite{FK2012_C_eng} it is not specified that the maps $h_i:L_i\to L_i'$ are the restrictions of elements in $G$, however the elements $h_i$ are given by \cite[Lemma 5]{FK2012_C_eng}, where they appear as so.
\end{rem}

%%%%%%%%%%%%%%%%%%%%%%%%%%%%%

\section{Theorem \ref{mthm:1}: Property $\pstar$ for groups with infinitely many ends}
\label{s:mthma}

\subsection{Preliminaries}\label{s:stallings}

\paragraph{A previous result --} Virtually free groups are the typical examples of groups with infinitely many ends. In \cite{DKN2014} Deroin, Kleptsyn and Navas succeeded in showing that virtually free groups have property $\pstar$:

\begin{thm}[Deroin, Kleptsyn, Navas]\label{thm:DKNfree}
	Let $G\subset \Diff_+^{\omega}(\T)$ be a virtually free subgroup acting minimally on the circle. Then $G$ has property $\pstar$.
\end{thm}
Hence, Theorem~\ref{mthm:1} extends the main result of \cite{DKN2014}. In fact, the proof of Theorem~\ref{mthm:1} relies on an interplay between the proof of Theorem~\ref{thm:DKNfree} and Stallings' theorem, following ideas of Hector and Ghys \cite{euler} that we sketch in \S~\ref{s:stallings}.

\paragraph{Stallings' theorem and virtually free groups --}

We will use what we know about the action of $G$ to restrict the possible Stallings' decompositions of a group $G$ acting by real-analytic diffeomorphisms of the circle and admitting an exceptional minimal set. This idea  can be traced back to Hector (and Ghys) \cite{euler}.
As a first illustrative example, let us sketch an argument by Hector under the additional assumption of no torsion \cite[Proposition~4.1]{euler}.

\begin{thm}[Hector]
Let $G\subset \Diff_+^\omega(\T)$ be a finitely generated, torsion-free subgroup acting with an exceptional minimal set. Then $G$ is free.
\end{thm}

\begin{proof}
Duminy's theorem (Theorem~\ref{Duminy}) implies that $G$ has infinitely many ends, so has a Stallings' decomposition. Since the group is torsion free, the Stallings' decomposition must be a free product $G=G_1*G_2$ of finitely generated groups $G_1$ and $G_2$. Now, neither factor acts minimally (otherwise $G$ does). If one of the factors acts with an exceptional minimal set, then we can expand the free product $G_1*G_2$ until the moment we get $G=H_1*\ldots*H_n$ with every $H_i$ acting with some periodic orbit. Indeed, this procedure has to stop in a finite number of steps, for the rank (the least number of generators) of the factors is less than the rank of the group (this follows from a classical formula of Grushko; see  \cite{combinatorial}). Now we use that the action is by real-analytic diffeomorphisms. As the action has an exceptional minimal set, the group must be locally discrete (easy consequence of Proposition~\ref{p:localvf}).
Corollary~\ref{cor:finite-orbits} implies that the subgroups $H_i$'s must be either cyclic or semi-direct products of an infinite cyclic group with a finite group. Since the group $G$ is torsion-free, the only possibility is that every $H_i$ is infinite cyclic. Thus, $G$ is free, as claimed.
\end{proof}

In \cite{euler}, Ghys proved that the same holds for any group $G\subset \Diff_+^\omega(\T)$ acting with an exceptional minimal set:

\begin{thm}[Ghys]\label{thm:Ghys}
	Let $G\subset \Diff_+^{\omega}(\T)$ be a finitely generated subgroup acting  with an exceptional minimal set. Then $G$ is virtually free.
\end{thm}

We can sketch the proof of Ghys' Theorem~\ref{thm:Ghys} under the assumption that the group $G$ acting on the circle with an exceptional minimal set verifies a certain hypothesis, called \emph{Dunwoody's accessibility}. Finitely generated groups with $0$ or $1$ ends are accessible (by definition) and, in general, accessible groups are all those groups that can be obtained as amalgamated products or HNN extensions of accessible groups over finite groups. Dunwoody proved that finitely presented groups are accessible \cite{dunwoody1}, but there are 
finitely generated groups that are not accessible \cite{dunwoody2}. 

\begin{thm}[Ghys]
Let $G\subset \Diff_+^\omega(\T)$ be a finitely generated, accessible subgroup acting with an exceptional minimal set. Then $G$ is virtually free.
\end{thm}

\begin{proof}
Starting with a Stallings' decomposition of $G$, say $G=G_1*_Z G_2$ or $H*_Z$, we argue as before that the groups $G_1$ and $G_2$ or $H$ cannot act minimally. If the action of one of these groups has a finite orbit, then the group is virtually cyclic (Corollary~\ref{cor:finite-orbits}). Otherwise, it acts with an exceptional minimal set and Duminy's Theorem~\ref{Duminy} applies, so we can take a Stallings' decomposition and keep repeating this argument. Accessibility guarantees that this process stops after a finite number of steps, so the group $G$ is obtained by a (finite) combination of amalgamated products and HNN extensions over finite groups, with virtually cyclic groups as basic pieces. Finally, these groups are \emph{virtually free}, as one deduces from the following classical theorem \cite{virtually_free}:

\begin{thm}[Karrass, Pietrowski, Solitar]\label{t:KPS}
Let $G_1$, $G_2$ and $H$ denote finitely generated, virtually free groups and $Z$ a finite group. Then the amalgamated product $G_1*_Z G_2$ and the HNN extension $H*_Z$ are also virtually free.
\end{thm}
\end{proof}

\subsection{Proof of Theorem~\ref{mthm:1}: Outline}
\label{ssc:minimal}

The rest of this section is dedicated to the proof of Theorem~\ref{mthm:1}.

Let $G\subset \Diff_+^{\omega}(\T)$ be a locally discrete, finitely generated subgroup with infinitely many ends acting minimally on the circle. 
By Stallings' theorem, we know that either $G=G_1*_ZG_2$ or $G=H*_Z$, with $Z$ a finite 
group. For the proof of Theorem \ref{mthm:1}, we  analyse the factors appearing in Stallings' decompositions, as in the previous subsection. 
From now on, we shall assume that $G$ admits non-expandable points, otherwise $\pstar$ is trivially satisfied.

%%%%%%%%%%%%%%%%%%%%%%%%%%%%%%%%%%%%%%%%%%%%%

\paragraph{First (possible) case: No Stallings' factor acts minimally -- }

If such a factor has a finite orbit, then it is virtually cyclic by Corollary~\ref{cor:finite-orbits}. Otherwise, it acts with an exceptional minimal set, and Ghys' Theorem \ref{thm:Ghys} implies that it is virtually free. Therefore, $G$ is either an amalgamated product of virtually free groups over a finite group or an HNN extension of a virtually free group over a finite group. By the already mentioned theorem of Karrass, Pietrowski and Solitar (Theorem~\ref{t:KPS}), the group $G$ itself is virtually free. We deduce that the group satisfies $\pstar$ by Theorem~\ref{thm:DKNfree}.

%%%%%%%%%%%%%%%%%%%%%%%%%%%%%%%%%%%%%%%%%%%%%%%%

\paragraph{Second (impossible) case: At least one factor acts minimally --}

Under this assumption, we will prove that $G$ is non locally discrete borrowing one of the main arguments from \cite{DKN2014}. 
To do this, remark that it is enough to study the case where $G=G_1*_ZG_2$ is an amalgamated product, since any HNN extension $H*_Z$ contains copies of $H*_ZH$ as subgroups. Indeed, if we denote by $\sigma$ the \emph{stable letter} (that is, the element conjugating the two embedded copies of $Z$) in $H*_Z$, then $H$ and $\sigma H\sigma^{-1}$ generate a subgroup isomorphic to $H*_ZH$.

\medskip

Thus, from now on, we suppose that $G$ is an amalgamated product $G_1*_ZG_2$ over a finite group $Z$, and we assume that $G_1$ acts minimally. In particular $G_1$ is infinite, while $G_2$ can possibly be finite. For simplicity, we let $\mathcal{G} = \mathcal{G}_1\sqcup\mathcal{G}_2$ be a finite system of generators for $G$, with $\mathcal{G}_i$ generating $G_i$ and symmetric. We consider the length metric on the group $G$ associated with this generating system, and for every $n\in\N$ we let $B(n)$ be the ball of radius $n$ centred at the identity. 

Let us illustrate the main lines of the proof before getting involved in technicalities. This will be also the opportunity to introduce some further notation.

\begin{notation}\label{not:outer}
	Given a finite subset $E\subset G$, let $\rho(E)$ denote the \emph{outer radius} of $E$, that is, the minimal $n\in\N$ such that $E\subset B(n)$.
\end{notation}

\begin{notation}\label{notation}
We fix a non-expandable point $x_0\in \NE$, and for any finite set $E\subset G$, we let $x_E$ denote the closest point on the right of $x_0$ among the points in the image set $E \cdot x_0$ distinct from $x_0$ (such a point exists for any $E$ which is not contained in the stabilizer of $x_0$). This point corresponds to some $g_E\in E$, that is, $x_E=g_E(x_0)$. Besides,  $g_E$ is uniquely defined modulo right multiplication with an element in $\stab{G}{x_0}$. The length of the interval $J_E=[x_0,x_E]$ will be denoted by $\ell_E$.

In order to take account of the number of elements fixing $x_0$, and hence of possible overlaps of the intervals $g(J_E)$, for $g\in E$, we define
\[c_E=\max_{h\in E} \#\left ( E\cap h\,\stab{G}{x_0}\right ).\]
\end{notation}

As in \cite{DKN2014,FKone}, the proof is carried on in three different stages, which will be exposed separately in the next paragraphs, and then executed in the following subsections.

\paragraph{Step 1. --} The first and most important step (Proposition \ref{p:sufficient_estimate}) is to describe a \emph{sufficient condition} guaranteeing that for a prescribed sequence of finite subsets $E(n)\subset G$, setting $F(n)=E(n)^{-1}E(n)$, the elements $g_{F(n)}$ ``locally converge'' (in the $C^1$ topology) to the identity.  In concrete terms, letting
\begin{equation}\label{eq:notation}S_E=\sum_{g\in E}g'(x_0),\end{equation}
we will show that, in order to ensure the desired convergence, it is enough that
\begin{equation}\label{eq:sufficient_estimate}
\rho(E(n)) \frac{c_{E(n)}}{S_{E(n)}}=o(1)\quad\text{as }n\text{ goes to infinity}.
\end{equation}
Notice, however, that this criterion does not provide directly a contradiction to the hypothesis of local discreteness of $G$, since we are only able to show that $g_{F(n)}$ is closer and closer to $id$ \emph{when restricted} to (a complex extension of) an interval depending on $n$, which is unfortunately shrinking to $x_0$.

\begin{rem}
In the following, we will deal both with $C^0$ and $C^1$ local convergence. In fact, as the elements are real-analytic, the classical Cauchy estimates imply that the two notions are equivalent. The point is that for proving that the sequence of elements $g_{F(n)}$ converges $C^0$ to the identity, we first prove that the derivatives converge to $1$ and deduce from the control of the affine distortion that the elements converge $C^0$. 
\end{rem}
%%%%%%%%%%%

\paragraph{Step 2. --} We then show that it is very easy to find examples of sequences $\left (E(n)\right )_{n\in\N}$ which satisfy the criterion above, even in a very strong way. For this, we use three key facts:
\begin{enumerate}
\item $G_1$ acts minimally, hence taking a sufficiently large integer $n\in\N$, the sum $\sum_{g\in B_1(n)}g'(x)$ can be made as large as we want, and this uniformly on $x\in \T$ (Proposition \ref{p:minimal-bound}). Here, $B_1(n)$ is the ball of radius $n$ in $G_1$ with respect to the generating set $\mathcal G_1$ .
\item Using the tree-like structure and the normal form in amalgamated products, we  move from a $G_1$-slice in $G$ to another. Doing this, 
we increase the lower bound for $S_{E(n)}$ in an exponential way (Proposition~\ref{p:exponential-bound}). As a consequence, there exists $a>1$, such that
\[S_{E(n)}\ge a^{\rho(E(n))}.\]
\item 
At the same time, studying how the stabilizer $\stab{G}{x_0}$ sits inside $G$, we prove that $c_{E(n)}$ has at most linear growth in terms of $\rho(E(n))$ (Proposition~\ref{l:cE}). For this, we use Proposition~\ref{l:distorted} about distorted elements in amalgamated products.
This estimate turns to be fine enough: since $S_{E(n)}$ grows exponentially, the quantity in \eqref{eq:sufficient_estimate} decays exponentially.
\end{enumerate} 

\paragraph{Step 3. --} The key idea here relies on a result of Ghys \cite[Proposition 2.7]{proches} (that can be traced back to Gromov \cite[\S~7.11.E1]{dambra-gromov}) about groups of analytic local  diffeomorphisms defined on the complex neighbourhood $U^{\C}_r(x_0)$ of radius $r>0$ of $x_0\in\C$:
\begin{prop}\label{p:ghys}
For any $r>0$ there exists $\eps_0>0$ with the following property: Assume that the complex analytic local diffeomorphisms $f_1$, $f_2:U_r^{\C}(x_0)\to \C$ are $\eps_0$-close (in the $C^0$ topology) to the identity, and let the sequence $f_k$ be defined by the recurrence relation
\[f_{k+2}=[f_k,f_{k+1}],\quad k=1,2,3,\ldots\]
Then all the maps $f_k$ are defined on the disc $U_{r/2}^{\C}(x_0)$ of radius $1/2$, and $f_k$ converges to the identity in the $C^{1}$ topology on $U_{r/2}^{\C}(x_0)$.
\end{prop}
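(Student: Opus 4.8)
The plan is to exploit the fact that, for holomorphic maps, the commutator of two maps that are close to the identity is \emph{quadratically} closer to the identity, so that the recurrence $f_{k+2}=[f_k,f_{k+1}]$ forces a super-exponential (in fact doubly-exponential) decay of the distance to the identity. Throughout I would normalise $x_0=0$ and measure the size of a holomorphic local diffeomorphism $f$ by $\|f-\mathrm{id}\|_{C^0(U_\rho^{\C}(x_0))}$ on the relevant disc. The first observation, which explains why the hypothesis need only concern the $C^0$ topology, is that Cauchy's estimate converts $C^0$ smallness into $C^1$ (indeed $C^\infty$) smallness on slightly smaller discs: if $\|f-\mathrm{id}\|_{C^0(U_\rho^{\C}(x_0))}\le\delta$ then $\|f'-1\|_{C^0(U_{\rho-s}^{\C}(x_0))}\le\delta/s$.

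The key lemma is the commutator estimate, and I would base it on the exact identity $[f,g]-\mathrm{id}=(f\circ g-g\circ f)\circ(g\circ f)^{-1}$, which follows at once from $[f,g]\circ(g\circ f)=f\circ g$. Writing $f=\mathrm{id}+u$ and $g=\mathrm{id}+v$, one computes $(f\circ g-g\circ f)(z)=\bigl(u(z+v(z))-u(z)\bigr)-\bigl(v(z+u(z))-v(z)\bigr)$, and bounding each bracket by the fundamental theorem of calculus along the segment issuing from $z$ gives $\|f\circ g-g\circ f\|_{C^0}\le\|u'\|\,\|v\|+\|v'\|\,\|u\|$. Combining this with the Cauchy estimate for $u'$ and $v'$ yields, on a disc $U_{\rho'}^{\C}(x_0)$ with $\rho'=\rho-s-O(\|u\|+\|v\|)$, the bound
\[\|[f,g]-\mathrm{id}\|_{C^0(U_{\rho'}^{\C}(x_0))}\le\frac{C}{s}\,\|f-\mathrm{id}\|_{C^0(U_\rho^{\C}(x_0))}\,\|g-\mathrm{id}\|_{C^0(U_\rho^{\C}(x_0))}.\]
The $O(\|u\|+\|v\|)$ loss of radius is precisely what is needed to keep the four compositions, including the inverse $(g\circ f)^{-1}$, well defined, which holds as soon as the maps are close enough to the identity in $C^1$ (hence, by Cauchy, in $C^0$).

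With this lemma in hand I would run an induction along a fixed decreasing sequence of radii $\rho_k\downarrow r/2$ chosen so that $\sum_k(\rho_k-\rho_{k+1})=r/2$, say $\rho_k-\rho_{k+1}=\tfrac r2\,2^{-k}$, with $\rho_1=r$. Setting $\delta_k=\|f_k-\mathrm{id}\|_{C^0(U_{\rho_k}^{\C}(x_0))}$, the lemma gives $\delta_{k+2}\le \frac{C}{\rho_{k+1}-\rho_{k+2}}\,\delta_k\delta_{k+1}\le A\,2^{k}\delta_k\delta_{k+1}$ for a constant $A$ depending only on $r$. The substitution $\mu_k=2^k\delta_k$ absorbs the growing Cauchy constant and turns this into the constant-coefficient quadratic recurrence $\mu_{k+2}\le 2A\,\mu_k\mu_{k+1}$; setting $\nu_k=2A\mu_k$ gives $\nu_{k+2}\le\nu_k\nu_{k+1}$, whence $\nu_k\le\bigl(\max\{\nu_1,\nu_2\}\bigr)^{F_k}$ with $F_k$ the Fibonacci sequence. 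Since $\mu_1,\mu_2\le 4\eps_0$, choosing $\eps_0$ small enough that $\max\{\nu_1,\nu_2\}=\theta<1$ forces $\delta_k\le \tfrac{1}{2A}\,2^{-k}\theta^{F_k}$, i.e. doubly-exponential decay. In particular $\sum_k\delta_k$ is finite and as small as we wish, so the $O(\delta)$ radial losses stay within the budget $r/2$ and every $f_k$ is genuinely defined on $U_{\rho_k}^{\C}(x_0)\supset U_{r/2}^{\C}(x_0)$.

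Finally, $\delta_k\to 0$ gives $C^0$ convergence of $f_k$ to the identity on $U_{r/2}^{\C}(x_0)$, and one last Cauchy estimate upgrades this to $C^1$: since $f_k$ is defined on $U_{\rho_k}^{\C}(x_0)$ with $\rho_k-r/2=r2^{-k}$, one has $\|f_k'-1\|_{C^0(U_{r/2}^{\C}(x_0))}\le\delta_k/(\rho_k-r/2)\le \tfrac{1}{2Ar}\,\theta^{F_k}\to 0$. The main obstacle is exactly the bookkeeping of the third paragraph: one must reconcile the radial losses coming both from Cauchy's estimate and from keeping the four-fold composition well defined against the finite budget $r/2$, while ensuring that the doubly-exponential contraction survives the Cauchy constants $1/(\rho_k-\rho_{k+1})$, which blow up exponentially as the radii close in on $r/2$. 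The substitution reducing the variable-coefficient recurrence to a clean quadratic one (with Fibonacci exponents) is what makes this competition between doubly-exponential decay and single-exponential blow-up come out in our favour.
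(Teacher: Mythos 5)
The paper does not prove this proposition: it is quoted verbatim from Ghys (\emph{Sur les groupes engendr\'es par des diff\'eomorphismes proches de l'identit\'e}, Proposition 2.7), and your argument is a correct reconstruction of that classical proof --- the quadratic commutator estimate obtained from the identity $[f,g]-\mathrm{id}=(f\circ g-g\circ f)\circ(g\circ f)^{-1}$ together with Cauchy estimates, followed by the Fibonacci-exponent recursion whose doubly exponential decay absorbs both the exponentially growing Cauchy constants and the radial losses needed to keep the compositions defined down to the disc of radius $r/2$. No gap; the bookkeeping you flag is exactly the point of the proof and you handle it correctly.
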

The main point of this proposition is that if the sequence of iterated commutators $(f_k)_{k\in\N}$ is not eventually trivial, then $f_1$ and $f_2$ generate a group which is non locally discrete.

From the previous steps, it is not difficult to find elements $f_1$, $f_2$ of the form $g_{E(m)}$ which are very close to the identity on some neighbourhood of $x_0$, but we must exhibit explicit $f_1$ and $f_2$ for which we are able to show that the sequence of iterated commutators $f_k$ is not eventually the identity. This is certainly the case if $f_1$ and $f_2$ generate a free group: we prove in Proposition~\ref{p:step3} that it is possible to find such two elements, relying on Proposition~\ref{l:ping-pong} which allows a ping-pong argument.

\paragraph{Summary of the proof of Theorem~\ref{mthm:1} --}
We start with $G\subset \Diff_+^\omega(\T)$ a locally discrete, finitely generated subgroup with infinitely many ends, and a point $x_0\in\NE$.
% We suppose by way of contradiction that no nontrivial element in $G$ fixes $x_0$.

By Stallings' theorem, $G$ has  a Stallings' decomposition. Without loss of generality, we may suppose $G=G_1*_ZG_2$. We have seen how to rule out the case when no factor acts minimally. 
Therefore we consider the case when $G_1$ acts minimally. Under this assumption, Proposition~\ref{p:step3} ensures the existence of elements $f_1,f_2\in G$ such that:
\begin{enumerate}
	\item they are both $\eps_0$-close to the identity in the $C^0$ topology, when restricted to a certain complex neighbourhood of $x_0$,
	\item no iterated commutator $f_{k+2}=[f_k,f_{k+1}]$ is trivial.
\end{enumerate}
Then we apply Proposition~\ref{p:ghys} and get that the sequence $f_k$ converges to the identity in the $C^1$ topology when restricted to a fixed neighbourhood of $x_0$. This contradicts the hypothesis that the group $G$ is locally discrete.

%%%%%%%%%%%%%%%%%%%%%%%%%%%%%%%%%%%

\subsection{Step 1: Getting close to the identity}
\label{ssc:close}

Here we review the argument given in \cite[\S~3.2]{DKN2014} and \cite[\S~2.5]{FKone}, which explains how to find elements which are close to the identity in a neighbourhood of a non-expandable point.
The result is stated in a general form,  because of the algebraic issues that we have to overcome in \S~\ref{ssc:commutators}. The main result of this section is a variation of \cite[Lemma 3.15]{DKN2014}. For its statement and proof, we shall make use of Notations~\ref{not:outer}, \ref{notation}, and \eqref{eq:notation}.

\begin{prop}\label{p:sufficient_estimate}
Let $\left (E(n)\right )_{n\in\N}$ be a sequence of subsets of $G$ containing the identity. If
\[
\rho(E(n))\frac{c_{E(n)}}{S_{E(n)}}=o(1)\quad\text{as }n\text{ goes to infinity},
\]
then the sequence $g_{F(n)}$ for $F(n)=E(n)^{-1}E(n)$ converges to the identity in the $C^1$ topology on a complex disc of radius $o(1/\rho(E(n)))$ around $x_0$. More precisely, considering $r_n=o\left (1/\rho(E(n))\right )$ such that
\[\frac{c_{E(n)}}{S_{E(n)}}=o\left ({r_n}\right )\quad\text{as }n\text{ goes to infinity},\]
the (affinely) rescaled sequence 
\[\widetilde g_{F(n)}(t)=\frac{g_{F(n)}(x_0+r_n\,t)-x_0}{r_n}\]
converges to the identity in $C^0(U_1^{\C}(0))$ (and equivalently, in $C^1(U_1^{\C}(0))$).
%\marginpar{Why do you make distinction between the $C^0$ and $C^1$ issues ?}
\end{prop}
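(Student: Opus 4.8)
The plan is to prove the convergence by a blow-up of the dynamics at $x_0$ at scale $r_n$, reducing the statement to two independent estimates on $g_{F(n)}$: that it \emph{moves $x_0$ by} $o(r_n)$, and that its \emph{derivative equals} $1+o(1)$ throughout a neighbourhood of $x_0$ of size comparable to $r_n$. Writing $\lambda_n=g_{F(n)}'(x_0)$, the rescaled map satisfies $\widetilde g_{F(n)}(0)=\ell_{F(n)}/r_n$ and $\widetilde g_{F(n)}'(t)=g_{F(n)}'(x_0+r_n t)$, so these two estimates give precisely $\widetilde g_{F(n)}\to\mathrm{id}$ in $C^1([-1,1])$. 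The whole argument rests on the classical control of \emph{affine distortion} together with the decisive fact that $x_0\in\NE$ forces every element of $G$---in particular every partial product of a word representing $g_{F(n)}$---to have derivative at most $1$ at $x_0$.

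\textbf{Displacement estimate.} First I would bound $\ell_{F(n)}$ through the heuristic that ``the sum of derivatives controls total length''. For each $g\in E(n)$ the image $g(J_{F(n)})$ is an interval of length comparable to $g'(x_0)\,\ell_{F(n)}$ (bounded distortion, justified below), and these intervals have bounded overlap, each point of $\T$ being covered at most $\asymp c_{E(n)}$ times. This multiplicity bound is where both the stabilizer and the \emph{minimality of the gap} enter: points of $F(n).x_0$ distinct from $x_0$ lie at distance $\ge\ell_{F(n)}$ from $x_0$, which forbids overlaps beyond the ambiguity coming from $\stab{G}{x_0}$, measured precisely by $c_{E(n)}$. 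Summing the lengths gives $\ell_{F(n)}\,S_{E(n)}\lesssim c_{E(n)}$, i.e.\ $\ell_{F(n)}\lesssim c_{E(n)}/S_{E(n)}$, and the choice $c_{E(n)}/S_{E(n)}=o(r_n)$ yields $\ell_{F(n)}=o(r_n)$, so $\widetilde g_{F(n)}(0)\to 0$.

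\textbf{Distortion estimate and the derivative.} Since $E(n)\subset B(\rho(E(n)))$, the element $g_{F(n)}\in E(n)^{-1}E(n)$ is a product $s_N\cdots s_1$ of at most $N=2\rho(E(n))$ generators. On an interval $I_n$ of size $r_n$ around $x_0$ the affine distortion of $g_{F(n)}$ is bounded by $C\sum_{k=0}^{N-1}|s_k\cdots s_1(I_n)|$. Here non-expandability is used crucially: each partial product $s_k\cdots s_1$ lies in $G$, so its derivative at $x_0$ is $\le 1$ and hence (after a routine bootstrap absorbing the distortion itself) $|s_k\cdots s_1(I_n)|\lesssim r_n$. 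Thus the distortion is $\lesssim N\,r_n\asymp\rho(E(n))\,r_n=o(1)$, exactly because $r_n=o(1/\rho(E(n)))$. Consequently $g_{F(n)}'$ is nearly constant on $I_n$, equal to $\lambda_n(1+o(1))$ with $\lambda_n\le 1$. To get $\lambda_n\to1$ I would feed the same bound into $g_{F(n)}^{-1}\in F(n)$: its image $g_{F(n)}^{-1}(x_0)$ lies within $o(r_n)$ of $x_0$ (again by the displacement estimate), so bounded distortion gives $(g_{F(n)}^{-1})'(x_0)=\lambda_n^{-1}(1+o(1))$, and non-expandability of $x_0$ forces $\lambda_n^{-1}\lesssim1$. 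With $\lambda_n\le1$ this gives $\lambda_n\to1$, hence $\widetilde g_{F(n)}'\to1$ uniformly on $[-1,1]$.

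\textbf{Complex extension and the main obstacle.} To upgrade real $C^1$ convergence to convergence in $C^0(U_1^\C(0))$, I would rerun everything with the holomorphic extensions of the generators on a fixed complex neighbourhood of $x_0$: the distortion bound, being a statement about $\sum|s_k\cdots s_1(I_n)|$, transfers to the disc $U_{r_n}^\C(x_0)$ once one checks that the orbit of this disc under the partial words stays inside the domain of holomorphy, which the same non-expandability control guarantees. A priori boundedness then yields a normal family, and the real limit identifies every subsequential limit as $\mathrm{id}$ by the identity principle. The hard part, I expect, is precisely this distortion control in the complex domain: keeping the $N\asymp\rho(E(n))$ intermediate images inside a fixed neighbourhood while summing their diameters, and running the bootstrap so that the error terms---which feed back into the very lengths being summed---remain $o(1)$. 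The hypothesis $\rho(E(n))\,c_{E(n)}/S_{E(n)}=o(1)$ is exactly what opens a nonempty window ($c_{E(n)}/S_{E(n)}=o(r_n)$ and simultaneously $r_n=o(1/\rho(E(n)))$, e.g.\ the geometric mean of the two bounds) for the scale $r_n$, reconciling the displacement and distortion estimates.
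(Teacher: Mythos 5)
Your proposal follows essentially the same route as the paper's (itself only sketched) argument: the affine-distortion bound $\varkappa(g;U_r^{\C}(x_0))\lesssim Sr$ with $S\le n$ at the non expandable point on the scale $r_n=o(1/\rho(E(n)))$, the covering-multiplicity estimate giving $\ell_{F}\lesssim c_{E}/S_{E}$ (the paper's Lemma~\ref{l:ell_F}), and the squeeze on $g_{F(n)}'(x_0)$ via the inverse map evaluated at $x_0$ and at $x_{F(n)}$. The bootstrap you flag as the hard part is precisely the ``quite subtle'' step the paper itself defers to \cite[Lemma 3.12]{DKN2014}, so your account matches the paper's level of rigour and its key ideas.
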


We avoid the (somehow technical) details of the proof and prefer to explain the relevant ideas, which mostly rely on the classical technique of \emph{control of affine distortion} (see \cite[Lemma 3.7]{DKN2014}). 

Recall from \S~\ref{sc:Markovp} that the distortion coefficient $\varkappa(g;I)$ measures the failure of some diffeomorphism $g:I\to g(I)$ to be an affine map. As it behaves sub-additively with respect to composition, the distortion coefficient of $g=g_n\cdots g_1$ is usually estimated by the sum $S=\sum_{i=0}^{n-1}(g_i\cdots g_1)'(x_0)$ introduced in \eqref{eq:S}.
The key observation in our framework (and originally of \cite{DKN2014,FKone}) is that at non-expandable points $x_0\in\NE$, we obviously have $S\le n$ for $g\in B(n)$. Therefore Proposition~\ref{l:schwartz} implies that, for a very large $n$, in a neighbourhood of size $r\ll 1/n$ about $x_0$, the maps in $B(n)$ are almost affine. In particular, the element $g_{F(n)}$ (resp.~$\widetilde g_{F(n)}$) is almost affine on a neighbourhood of radius $r_n=o\left (1/\rho(E(n))\right )$ (resp.~$1$) about $x_0$ (resp.~$0$). 

To see that the derivative of $g_{F(n)}$ (and $\widetilde g_{F(n)}$) is close to $1$, we consider the inverse map $g_{F(n)}^{-1}$, which satisfies
\[(g_{F(n)}^{-1})'(x_0)\le 1\quad\text{and}\quad (g_{F(n)}^{-1})'(x_{F(n)})=\frac{1}{g_{F(n)}'(x_0)}\ge 1.\]
The point $x_{F(n)}$ is at distance $\ell_{F(n)}$ from $x_0$ (Notation~\ref{notation}). If $\ell_{F(n)}=o(r_n)$, then the control on the affine distortion guarantees that the derivative of $g_{F(n)}^{-1}$, and hence of $g_{F(n)}$, is close to $1$ on the neighbourhood of radius $r_n$.
Indeed, for every $z \in U_r(x_0)$ one has
\[
\log (g_{F(n)}^{-1})'(z)=\log \frac{(g_{F(n)}^{-1})'(z)}{(g_{F(n)}^{-1})'(x_0)}+\log(g_{F(n)}^{-1})'(x_0)\le \sup_{x,y\in U_r(x_0)} \log \frac{(g_{F(n)}^{-1})'(x)}{(g_{F(n)}^{-1})'(y)}
\]
and
\[
\log (g_{F(n)}^{-1})'(z)=\log \frac{(g_{F(n)}^{-1})'(z)}{(g_{F(n)}^{-1})'(x_{F(n)})}+\log(g_{F(n)}^{-1})'(x_{F(n)})\ge \inf_{x,y\in U_r(x_0)} \log \frac{(g_{F(n)}^{-1})'(x)}{(g_{F(n)}^{-1})'(y)}.
\]
Thus $\sup_{U_r(x_0)}|\log (g^{-1}_{F(n)})'|\le \varkappa (g^{-1}_{F(n)},U_r^{\mathbb C}(x_0))$.

The asymptotic condition $\ell_{F(n)}=o(r_n)$  assures that also the map $\widetilde g_{F(n)}$ is almost the identity, since $\widetilde g_{F(n)}(0)=\ell_{F(n)}/r_n$. 
Therefore, we get the desired conclusion from the following key estimate:

\begin{lem}\label{l:ell_F}
Let $E\subset G$ be a finite subset of $G$ containing the identity and define $F=E^{-1}E$. Then the length $\ell_F$ verifies
\[\ell_F\le  C\,\frac{c_E}{
S_E},\]
where the constant $C>0$ does not depend on $E$.
\end{lem}

\begin{proof}[Sketch of the proof]
We observe that any two intervals $g(J_F)$ and $h(J_F)$, for $g,h\in E$, are either disjoint or have the same leftmost points, with equality if and only if $g\in h\,\stab{G}{x_0}$. Indeed, suppose that the left endpoint of $h(J_F)$ belongs to $g(J_F)$. Then $h^{-1}g(x_0)$ is closer than $x_F$ to $x_0$ on the right, and since $h^{-1}g\in E^{-1}E= F$, we must have $h^{-1}g(x_0)=x_0$, that is, $g\in h\,\stab{G}{x_0}$.

Therefore, the union of the intervals $g(J_F)$, for $g\in E$, covers the circle $\T$ at most $c_E$ times. With the (quite subtle) argument in \cite[Lemma 3.15]{DKN2014} relying on the control of the affine distortion, we find the inequality
\[\ell_F\le  C\,\frac{c_E}{
S_E},\]
as desired.
\end{proof}

%%%%%%%%%%%%%%%%%%%%%%%%%%

\subsection{Step 2: An exponential lower bound for the sum of derivatives}
\label{ssc:exponential}

Using the normal form of elements in an amalgamated product (Lemma~\ref{l:normalform}), we will use a tool developed in \cite{DKN2014} for free groups. The aim of this step is to find a sequence of subsets $A(n)$ with an exponential lower bound for the sum $S_{A(n)}$ as defined in \eqref{eq:notation}. We actually prove more: the exponential lower bound for the sum of the derivatives holds \emph{at every point $x\in\T$}. This turns out to be very useful, since it gives exponential lower bounds for the sum $S_{\psi A(n)\psi^{-1}}$ associated to each conjugate set $\psi A(n) \psi^{-1}$ of $A(n)$, 
where $\psi\in G$. 

We start by noticing that, since $G_1$ acts minimally, the proof of \cite[Proposition 2.5]{DKN2014} combined with a compactness type argument 
immediately yields:

\begin{prop}\label{p:minimal-bound}
%Let $G_1\subset \Diff^{2}_+(\T)$ be a finitely generated group whose action on $\T$ is minimal.
For every $M>0$, there exists $R_1\in\N$ such that for every $x\in S^1$ we have
\begin{equation}\label{eq:sumB1}
\sum_{g\in B_1(R_1)}g'(x)>M,
\end{equation}
where $B_1(R_1)$ is the ball of radius $R_1$ in $G_1$.
\end{prop}

As in Lemma~\ref{l:normalform}, we denote by $T_i$ a transversal set of cosets for ${}_Z\backslash{}^{G_i}$, $i=1,2$.  Using the previous proposition, we next prove:

\begin{lem}\label{p:sumB1-trans}
For every $M'>0$, there exists $R_1'\in \N$ such that
\[\sum_{t\in B_1^{\times}(R'_1)\cap T_1}t'(x) > M',\]
where $B_1^{\times}(R_1)$ is the ball $B_1(R_1)$ in $G_1$, but with the identity excluded.
\end{lem}

\begin{proof}
Let $c_0= |Z|\cdot \sup_{\gamma\in Z}\|\gamma'\|_0$. Take $M > c_0 (1 + M')$ and fix 
the associated $R_1$ given by Proposition~\ref{p:minimal-bound}. Decomposing the sum~\eqref{eq:sumB1} using the transversal set, we write
\begin{equation}\label{eq:sumB1-trans}
\sum_{g\in B_1(R_1)}g'(x) =\sum_{\gamma\in Z,\, t\in T_1\,:\,\gamma t\in B_1(R_1)}(\gamma t)'(x).
\end{equation}
Observe that, by the triangle inequality, one has the inclusion
\[
\{g=\gamma t\mid \gamma\in Z,\, t\in T_1\text{ such that }\gamma t\in B_1(R_1)\}\subset \{g=\gamma t\mid \gamma\in Z,\, t\in B_1(R_1+\rho(Z))\cap T_1\},
\]
(recall from Notation~\ref{not:outer} that $\rho(Z)$ denotes the outer radius of the set $Z$). Thus the sum~\eqref{eq:sumB1-trans} 
is bounded from above by the same sum but over the larger set:
\[
\sum_{g\in B_1(R_1)}g'(x)\le \sum_{\gamma\in Z}\left (\sum_{t\in B_1(R_1+\rho(Z))\cap T_1}(\gamma t)'(x)\right ).
\]
Next, using the chain rule and taking care of the identity element, we obtain:
\begin{align*}
M\le \sum_{g\in B_1(R_1)}g'(x) \le\,&\sum_{\gamma\in Z}\left (\sum_{t\in B_1(R_1+\rho(Z))\cap T_1}\gamma'(t(x)) t'(x)\right )\\
\le\,& |Z|\cdot \sup_{\gamma\in Z}\|\gamma'\|_0\,\left (1+\sum_{t\in B_1^{\times}(R_1+\rho(Z))\cap T_1}t'(x)\right )\\
=\,&c_0\left (1 + \sum_{t\in B_1^{\times}(R_1+\rho(Z))\cap T_1}t'(x)\right ).
\end{align*}
Setting $R_1'=R_1+\rho(Z)$, this closes the proof.
\end{proof}

It is easy now to construct a sequence of sets $A(n)$ 
with an exponential lower bound for the sum of the derivatives. Indeed, it is enough to fix an element $\sigma\in T_2 \setminus \{id\}$, 
and define the product set
\begin{equation}\label{eq:A(n)}
A(n)=\sigma \left (B_1^{\times}(R_1')\cap T_1\right )\cdots \sigma \left (B_1^{\times}(R_1')\cap T_1\right ),\end{equation}
where the product of $\sigma \left (B_1^{\times}(R_1')\cap T_1\right )$ is repeated $n$ times and $R_1'$ is appropriately chosen.

\begin{lem}\label{l:sumA}
There exists $a>1$ such that for all $n\in \N$ and every $x\in\T$,
\[\sum_{g\in A(n)}g'(x)\ge a^{\rho(A(n))}.\]
\end{lem}

\begin{proof}
Take $M'>(\inf \sigma')^{-1}$ and the associated $R_1'$ from Lemma~\ref{p:sumB1-trans}.
Let us consider all the products $\sigma t_1$, with $t_1\in B_1^{\times}(R_1')\cap T_1$. 
We define $\overline{M}=M'\cdot \inf \sigma'$, which is larger than $1$ by assumption. With this choice, we have
\begin{align*}
\sum_{g\in A(n)}g'(x)\,&= \sum_{t_1,\ldots,t_n\in B_1^{\times}(R_1')\cap T_1}(\sigma t_n\cdots \sigma t_1)'(x) \\
\,&\ge \overline{M}\cdot \sum_{t_1,\ldots,t_{n-1}\in B_1^{\times}(R_1')\cap T_1}(\sigma t_{n-1}\cdots \sigma t_1)'(x).
\end{align*}
Proceeding inductively, we get $\sum_{g\in A(n)}g'(x)\ge \overline{M}^n$. We claim that it is enough to set 
$a=\overline{M}^{1/\left (R_1'+d_{\mathcal G}(id,\sigma)\right )}$.
This is because the inequality $\rho(A(n))\le n(R_1'+d_{\mathcal G}(id,\sigma))$ hold for every $n\in\N$: by definition, the subset $A(n)$ is contained in the ball of radius $n(R_1'+d_{\mathcal G}(id,\sigma))$ in $G$, so the outer radius of $A(n)$ grows at most linearly on $n$.
\end{proof}

Finally, we have:

\begin{prop}\label{p:exponential-bound}
For any $\psi\in G$, there exists a constant $C(\psi)$ such that the sum $S_{\psi A(n)\psi^{-1}}$ defined as in \eqref{eq:notation} satisfies
\[S_{\psi A(n)\psi^{-1}}\ge C(\psi)\, a^{\rho\left (\psi A(n)\psi^{-1}\right )}.\]
\end{prop}
\begin{proof}
For $\psi\in G$, let $\lambda=\|\psi \|$ denote its length in the generating system $\mathcal G$. Then, by the triangle inequality, for any $n\in\N$, we have
\[\rho\left (\psi A(n)\psi^{-1}\right )\le \rho(A(n))+2\lambda.\]
We can easily compare the sum $S_{\psi A(n)\psi^{-1}}$ with the sum of the derivatives of elements in $A(n)$:
\begin{align*}
S_{\psi A(n)\psi^{-1}}\,&=\sum_{g\in \psi A(n)\psi^{-1}}g'(x_0)\\
\,&=\sum_{h\in A(n)}(\psi h\psi^{-1})'(x_0)\\
\,&\ge \inf \psi'\cdot\sum_{h\in A(n)}h'(\psi^{-1}(x_0)) \cdot (\psi^{-1})'(x_0).
\end{align*}
Hence, by Lemma~\ref{l:sumA}, we have the inequality
\[S_{\psi A(n)\psi^{-1}}\ge \left (\inf \psi'\cdot (\psi^{-1})'(x_0)\right )a^{\rho(A(n))}.\]
The proof is finished by letting $C(\psi)=a^{-2\lambda}\,(\psi^{-1})'(x_0)\inf \psi'$.
\end{proof}

Now, let us set $E(n)=\{id\}\cup A(n)$ and 
\begin{equation}
\label{eq:defFn}
F(n)=E(n)^{-1}E(n).
\end{equation}
In order to close the second step, it remains to estimate the quantity $c_{\psi E(n)\psi^{-1}}$ (Notation~\ref{notation}), which gives an upper bound for the number of overlaps of the intervals $g(J_{\psi F(n)\psi^{-1}})$, for $g\in \psi E(n)\psi^{-1}$.

Let us first rule out a particular (but important) case.

\begin{lem}\label{l:cEdistorted}
	Assume that the subgroup $\stab{G}{x_0}$ is cyclically generated by a distorted element $\fhi\in G$. Then the quantity
	\[
	c_{E(n)}=\max_{h\in E(n)}\#\left (E(n)\cap h\,\stab{G}{x_0}\right )
	\]
	is uniformly bounded: there exists $L>0$ such that $c_E(n)<L$ for every $n\in \N$.
\end{lem}

\begin{proof}
	In the proof, we consider the action of $G=G_1*_ZG_2$ on the Bass-Serre tree $X$. Notations  are borrowed from \S~\ref{ssc:bass-serre}.
	
	As $\fhi$ is distorted, Lemma~\ref{l:distorted} implies that $\fhi$ belongs to a conjugate factor $g^{-1}G_ig$.
	Without loss of generality, we can suppose $\fhi\in g^{-1}G_1g$, for some $g\in G$. Indeed, only a subgroup acting minimally can contain distorted elements: otherwise the subgroup would be virtually free (Corollary~\ref{cor:finite-orbits} and Theorem~\ref{thm:Ghys}), and virtually free groups do not have distorted elements of infinite order (Lemma~\ref{l:dist_free}).
	
	After Lemma~\ref{l:find_axis}, every element belonging to the subset $A(n)$ acts as a hyperbolic isometry with translation length $2n$, whose translation axis contains the common segment $(G_1\sigma^{-1},G_2,G_1)$.
	On the other hand, every element of $\stab{G}{x_0}$ acts as an elliptic isometry, fixing the vertex $G_1g\in X$.
	This already gives $E(n)\cap \stab{G}{x_0}=\{id\}$, so in the following we fix $h\in  A(n)$ and we look for a uniform upper bound for the quantity 
	$\#\left (E(n)\cap h\,\stab{G}{x_0}\right )$.
	Notice that $E(n)\cap h\,\stab{G}{x_0}= A(n)\cap h\,\stab{G}{x_0}$, as $id\in h\,\stab{G}{x_0}$ would imply $h\in\stab{G}{x_0}$.
	Therefore we want to prove that the cardinality of the set
	\begin{equation}\label{eq:setPh}
	P_h=\left\{\ell \in \Z\mid  h\fhi^\ell\in A(n) \right\}
	\end{equation}
	is uniformly bounded on $h\in A(n),n\in\N$.
	
	Let us assume that there exists $\ell\in P_h$ and write $\widetilde h= h\, \fhi^\ell\in A(n)$. As already observed, the translations axes $X(h),X(\widetilde h)$ contain a common segment, so they intersect.

		Consider the point $G_1g$ in the Bass-Serre tree, which is fixed by $\fhi\in g^{-1}G_1g$. Because of the equality $ h^{-1} \widetilde h=\fhi^\ell$, the images $G_1gh^{-1}$ and $G_1g\widetilde h^{-1}$ are the same (recall that the action on the Bass-Serre tree is naturally a right action)
		
		\begin{claim}\label{cl:vertex}
			The vertex $G_1g$ belongs to the intersection $X(h)\cap X(\widetilde{h})$.
		\end{claim}
		
		\begin{proof}[Proof of Claim]
		Applying the formula \eqref{eq:dist_image} for the distances of the images, we find
		\begin{align*}
		d(G_1g,G_1g\widetilde h^{-1})=\,&2n+2d(G_1g,X(\widetilde h)),\\
		d(G_1g,G_1g h^{-1})=\,&2n+2d(G_1g,X(h))
		\end{align*}
		and by equality of the images, we must have $d(G_1g,X(\widetilde h))=d(G_1g,X(h))$.
		
		Let us assume by way of contradiction that this distance is not zero. Since the two axes intersect, the geodesic segments from $G_1g$ to $X(\widetilde h)$ and $X(h)$ respectively, must be the same: indeed, if this was not the case, these segments would give a nontrivial geodesic path connecting $X(h)$ and $X(\widetilde h)$; then the union of such a path and the intersection of the axes would give a nontrivial loop in the tree. Call this geodesic segment $\gamma$, which goes from $G_1g$ to the intersection $X(h)\cap X(\widetilde h)$.
		
		We are assuming $d(G_1g,X(\widetilde h))>0$, so this segment $\gamma$ has more than one vertex. We then have that the product $ h^{-1} \widetilde h =\fhi^\ell $ fixes it: indeed, we repeat the previous argument and get that the geodesic paths from $G_1gh^{-1}=G_1g\widetilde h^{-1}$ to $X(h)$ and $X(\widetilde h)$ coincide, and this common path is exactly the image $\gamma.h^{-1}=\gamma.\widetilde h^{-1}$.
		We deduce that $\fhi^\ell$ belongs to a conjugate of the edge group~$Z$. However $\fhi$ has infinite order, a contradiction.
	\end{proof}

	\begin{claim}
		The intersection $X(h)\cap X(\widetilde h)$ coincides with the segment from $G_1gh^{-1}=G_1g\widetilde h^{-1}$ to $G_1g$.
	\end{claim}
	
	\begin{proof}[Proof of Claim]
		By Claim~\ref{cl:vertex} the vertex $G_1g$ belongs to both axes $X(h)$ and $X(\widetilde h)$. Since the images $G_1gh^{-1}$ and $G_1g\widetilde h^{-1}$ are the same, we have that the intersection $X(h)\cap X(\widetilde h)$ contains the whole segment of length~$2n$ between $G_1g$ and its image $G_1gh^{-1}=G_1g\widetilde h^{-1}$.
		
		On the one hand $ h^{-1}\widetilde h =\fhi^\ell$ fixes exactly one point, while on the other hand the two elements act like translations by $2n$ on their own translation axes. If the intersection contains more than $2n$ points, then we get that the product $ h^{-1}\widetilde h =\fhi^\ell$ fixes at least two points, and thus $\fhi$ has finite order. This gives a contradiction. 
	\end{proof}
	
	\begin{claim}\label{cl:bound}
		There exists a finite set $P$ such that if $h,\widetilde h\in A(n)$ and $\ell\in \Z$, are such that $\widetilde h=h\fhi^{\ell}$, then $\fhi^\ell \in P$.
	\end{claim}
	
	\begin{proof}[Proof of Claim]
		The elements $h$ and $\widetilde h$ are in $A(n)$: by its definition in \eqref{eq:A(n)}, there exist $t_i$'s and $\widetilde t_i$'s in $B_1^{\times}(R_1')\cap T_1$, $i=1,\ldots n$, such that
		\begin{equation}\label{eq:form_A(n)}
		h=\sigma t_n\cdots \sigma t_1,\quad \widetilde h=\sigma \widetilde t_n\cdots \sigma \widetilde t_1.
		\end{equation}
		Notice that the vertex $G_1$ belongs to the intersection $X(h)\cap X(\widetilde h)$ (as we said at the beginning, it contains the path $(G_1\sigma^{-1},G_2,G_1)$). The situation is cartooned in Figure~\ref{fig:cE}.
		\begin{figure}
			\[\includegraphics[scale=1]{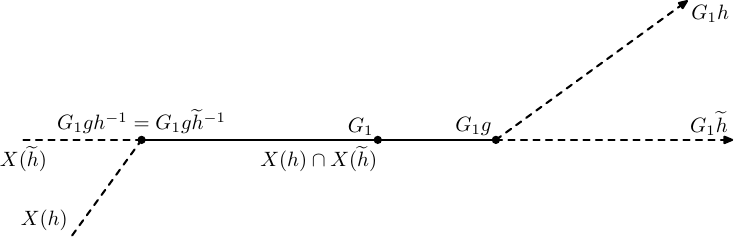}\]
			\caption{The translation axes of the two elements in the proof of Proposition~\ref{l:cE}}\label{fig:cE}
		\end{figure}
		Given the explicit expression \eqref{eq:form_axis} for the translation axes of elements in $A(n)$, we deduce that there exists $0\le k\le n$ such that
		\[
		G_1g=G_1\sigma  t_k\cdots \sigma  t_1\quad \text{or}\quad G_1g=G_1\sigma^{-1}  t^{-1}_k\cdots \sigma^{-1}  t^{-1}_n\sigma^{-1}
		\]
		(with abuse of notation, the case $k=0$ corresponds to $G_1g=G_1$).
		Let us write temporarily $\widetilde g=\sigma  t_k\cdots \sigma  t_1$ (resp.~$\widetilde{g}=\sigma^{-1}  t^{-1}_k\cdots \sigma^{-1}  t^{-1}_n\sigma^{-1}$); since $G_1g=G_1\widetilde{g}$ there exists $f\in G_1$ so that $g=f\widetilde{g}$ and thus $\widetilde{g}^{-1}G_1\widetilde{g}=g^{-1}f^{-1}G_1fg=g^{-1}G_1g$. Therefore we can suppose that $g$ is the initial (resp.~the inverse of the final) part of $h$, that is $g=\widetilde g=\sigma  t_k\cdots \sigma  t_1$ (resp.~$g=\sigma^{-1}  t^{-1}_k\cdots \sigma^{-1}  t^{-1}_n\sigma^{-1}$). We also write $\fhi^\ell=g^{-1}x^\ell g$, with $x\in G_1$.
		
		Assume first $g=\widetilde g=\sigma  t_k\cdots \sigma  t_1$. The product $ h\fhi^\ell= h g^{-1}x^\ell g$ has therefore a ``cyclic simplification'':
		\begin{equation}
		\label{eq:cyclic}
		 h\fhi^\ell=\sigma  t_n\cdots \sigma \left( t_{k+1} x^\ell\right) \sigma  t_k \cdots \sigma  t_1,
		\end{equation}
		with $\left(t_{k+1}x^\ell\right)$ belonging to $G_1$. Now, this product $h\fhi^\ell$ equals $\widetilde h$, so we compare the expression \eqref{eq:cyclic} above to the resulting expression from \eqref{eq:form_A(n)}. 
		From Remark~\ref{r:normal}, we deduce that the product $\widetilde t^{-1}_{k+1}\left( t_{k+1} x^\ell \right)$ is in $Z$ and thus $x^\ell \in B_1(2R'_1)Z$. 
		
		For the second case $g=\sigma^{-1}  t^{-1}_k\cdots \sigma^{-1}  t^{-1}_n\sigma^{-1}$, we consider the element $\widetilde h \fhi^{-\ell}=h$. We have
		\begin{align*}
		&\widetilde h \fhi^{-\ell} = \widetilde h g^{-1}x^{-\ell} g \\
		=\,& \sigma \widetilde t_n\cdots \sigma \widetilde t_1 \,\sigma t_n\cdots t_k\sigma \, x^{-\ell}\, \sigma^{-1}t_k^{-1}\cdots \sigma^{-1}t_n^{-1}\sigma^{-1},
		\end{align*}
		and the latter expression cannot be shortened, unless $g=id$ (because $x^{-\ell}\in G_1\setminus Z$). From Remark~\ref{r:normal}, as this expression equals $h$ which is of length $2n$, it can only be that $g=id$ and $t^{-1}_1(\widetilde t_1 x^{-\ell})\in Z$. Therefore we must have $x^{-\ell}=\fhi^{-\ell}\in B_1(2R_1')Z$ in this case.
		
		In the first case, the set $P= gB_1(2R_1')Zg^{-1}$ convenes, while in the second one we can take $P=ZB_1(2R_1')$.
	\end{proof}
	
	By Claim~\ref{cl:bound}, the cardinality of the set $P_h$ defined in \eqref{eq:setPh} is uniformly bounded by $L=\# P$. This gives the desired result.	
	\end{proof}
	
	\setcounter{claim}{0}

\begin{prop}
\label{l:cE}For any $\psi\in G$, the function 
\[c_{\psi E(n)\psi^{-1}}=\max_{h\in \psi E(n)\psi^{-1}}\#\left (\psi E(n)\psi^{-1}\cap h\,\stab{G}{x_0}\right )\]
grows at most linearly in terms of the outer radius $\rho\left (\psi E(n)\psi^{-1}\right )$. More precisely, there exists a constant $L\in\N$ such that $c_{\psi E(n)\psi^{-1}}\le L\, \rho\left (\psi E(n)\psi^{-1}\right )$.
\end{prop}

\begin{proof}
	Recall that, under our assumption of real-analytic regularity, the stabilizer of $x_0$ is either trivial or infinite cyclic (Theorem \ref{t:hector}).
	If the stabilizer $\stab{G}{x_0}$ is trivial, clearly $c_{E}$ is always $1$, no matter what $E$ is.
	Hence, we can suppose that the stabilizer $\stab{G}{x_0}$ is infinite cyclic and generated by some element $\fhi\in G$. Here we distinguish two cases, depending on whether $\fhi$ is \emph{undistorted} in $G$ or not.
	If $\fhi$ is undistorted, then the quantity $c_{\psi E(n)\psi^{-1}}$ grows at most linearly in terms of the outer radius $\rho(\psi E(n)\psi^{-1})$. If $\fhi$ is distorted, as in the proof of Lemma~\ref{l:cEdistorted}, we assume $\fhi = g^{-1}\fhi_1 g$, for some $\fhi_1\in G_1$ and $g\in G$. Let us show that in this case $c_{\psi E(n)\psi^{-1}}$ is linearly bounded in terms of the outer radius.
	
	Notice first that the quantity
	\[c_{\psi E(n)\psi^{-1}}=\max_{h\in \psi E(n)\psi^{-1}}\#\left (\psi E(n)\psi^{-1}\cap h\,\stab{G}{x_0}\right )= \max_{h\in \psi E(n)\psi^{-1}}\#\left (\psi E(n)\psi^{-1}\cap h\,g^{-1}\langle \fhi_1\rangle g\right )\]
	is also equal to
	\[\max_{h\in E(n)}\#\left (E(n)\cap h\,\psi^{-1}g^{-1}\langle \fhi_1\rangle g\psi\right )
	,\]
	therefore up to replacing $g$ above with $g\psi$, it is enough to find a uniform bound for 
	$c_{E(n)}$. This has been established with the previous Lemma~\ref{l:cEdistorted}.
	\end{proof}

As a consequence of the results of \S~\ref{ssc:close}, we obtain the following key fact. For notations appearing in the statement and proof, we refer the reader to the usual Notations~\ref{not:outer}, \ref{notation} and \eqref{eq:notation}.
\begin{cor}\label{c:close}
Given $\eps_0>0$ and $\psi\in G$, there exists $n=n(\psi)$ such that the element $g_{\psi F(n)\psi^{-1}}$ is locally $\eps_0$-close to the identity in the $C^0$ topology
when restricted to a certain complex neighbourhood of $x_0\in\NE$.
\end{cor}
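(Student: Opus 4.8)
The plan is to deduce the corollary from Proposition~\ref{p:sufficient_estimate} applied to the conjugated sequence $\tilde E(n):=\psi E(n)\psi^{-1}$. First I observe that $\tilde E(n)$ contains the identity and that $\tilde E(n)^{-1}\tilde E(n)=\psi E(n)^{-1}E(n)\psi^{-1}=\psi F(n)\psi^{-1}$, so the element produced by the proposition is exactly $g_{\psi F(n)\psi^{-1}}$, and the conclusion is convergence to the identity on a complex disc \emph{around $x_0$} (all the quantities $\rho,c,S$ being computed at the fixed base point $x_0$). Thus it suffices to check the sufficient estimate $\rho(\tilde E(n))\,c_{\tilde E(n)}/S_{\tilde E(n)}=o(1)$ and then to take $m$ large enough that $g_{\psi F(m)\psi^{-1}}$ lies within $\eps_0$ of the identity in the $C^0$ topology on the corresponding neighbourhood.

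The numerator and denominator are already under control. On the one hand, $\rho(\tilde E(n))\le\rho(E(n))+2\lambda(\psi)$, where $\lambda(\psi)$ is the word length of $\psi$, so $\rho(\tilde E(n))$ grows \emph{linearly} in $\rho(E(n))$. On the other hand, Proposition~\ref{p:exponential-bound} gives $S_{\tilde E(n)}\ge S_{\psi A(n)\psi^{-1}}\ge C(\psi)\,a^{\rho(\psi A(n)\psi^{-1})}$, which grows \emph{exponentially} in $\rho(E(n))$ (the exponents differing by an additive constant depending only on $\psi$). Hence the whole estimate reduces to showing that $c_{\tilde E(n)}$ grows subexponentially, indeed at most polynomially, in $\rho(E(n))$.

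The main obstacle is precisely this control of $c_{\tilde E(n)}$, which is subtler than Lemma~\ref{l:cE} because conjugation moves the base point: unwinding the definition, $c_{\tilde E(n)}=\max_{g\in E(n)}\#\big(E(n)\cap g\,\stab{G}{\psi^{-1}(x_0)}\big)$, so one counts cosets of the \emph{shifted} stabilizer $H:=\stab{G}{\psi^{-1}(x_0)}=\psi^{-1}\stab{G}{x_0}\psi$ inside the \emph{unconjugated} set $A(n)$. I would re-run the trichotomy of Lemma~\ref{l:cE} with $H=\langle\bar\fhi\rangle$, $\bar\fhi=\psi^{-1}\fhi\psi$, noting that being (un)distorted is a conjugacy-invariant property, so $\bar\fhi$ is distorted exactly when $\fhi$ is. If $H$ is trivial then $c_{\tilde E(n)}\equiv1$; if $\bar\fhi$ is undistorted, the crude bound $c_{\tilde E(n)}\le\#\big(H\cap B(2\rho(E(n)))\big)=O(\rho(E(n)))$ (since $g^{-1}h\in H\cap B(2\rho(E(n)))$) suffices. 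The delicate case is the distorted one: there $\fhi$ lies in the minimally acting factor, say $\fhi\in G_1$ (virtually free groups have no distorted elements), so a relation $g^{-1}h=\psi^{-1}\fhi^{\ell}\psi$ with $g,h\in A(n)$ rewrites as $\psi\,g^{-1}h\,\psi^{-1}=\fhi^{\ell}\in G_1$, an element of syllable length $\le1$ in the amalgam. Since conjugation by the fixed element $\psi$ changes the reduced syllable length by at most $2\lambda(\psi)$, the word $g^{-1}h$ has syllable length bounded in terms of $\psi$ only; as its syllables come from the finite alphabet $\{\sigma^{\pm1}\}\cup(B_1^\times(R_1)\cap T_1)$ together with finitely many junction products, $g^{-1}h$ ranges in a finite set, bounding the number of admissible $\ell$ and hence $c_{\tilde E(n)}$ by a constant depending only on $\psi$.

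With $c_{\tilde E(n)}=O(\rho(E(n)))$ in all cases, the sufficient estimate follows at once, since
\[
\rho(\tilde E(n))\,\frac{c_{\tilde E(n)}}{S_{\tilde E(n)}}\le\frac{O\big(\rho(E(n))^2\big)}{C(\psi)\,a^{\rho(\psi A(n)\psi^{-1})}}\xrightarrow[n\to\infty]{}0.
\]
Proposition~\ref{p:sufficient_estimate} then yields that $g_{\psi F(n)\psi^{-1}}$ converges $C^1$, a fortiori $C^0$, to the identity on a complex neighbourhood of $x_0$, and choosing $m=m(\psi)$ large enough gives the asserted $\eps_0$-closeness.
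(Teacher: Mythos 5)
Your proposal is correct and follows essentially the same route as the paper, which presents Corollary~\ref{c:close} as a direct consequence of Proposition~\ref{p:sufficient_estimate} combined with the exponential lower bound of Proposition~\ref{p:exponential-bound} and the growth control of Lemma~\ref{l:cE}, applied to the conjugated sets $\psi E(n)\psi^{-1}$. You additionally make explicit (and correctly resolve) a point the paper leaves implicit, namely that $c_{\psi E(n)\psi^{-1}}$ counts cosets of the conjugated stabilizer $\psi^{-1}\stab{G}{x_0}\psi$ inside $E(n)$, so the trichotomy of Lemma~\ref{l:cE} must be re-run for that subgroup; your treatment of the distorted case via the reduced syllable length is in the spirit of the paper's own argument and is sound.
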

\begin{proof}
Given $\psi\in G$, consider the constants $C=C(\psi)$ and $L$ from Propositions~\ref{p:exponential-bound} and~\ref{l:cE} respectively. Then the quantity
\[
\rho\left (\psi E(n)\psi^{-1}\right )\frac{c_{\psi E(n)\psi^{-1}}}{S_{\psi E(n)\psi^{-1}}}\le \frac{L}{C}\,\rho\left (\psi E(n)\psi^{-1}\right )^2 a^{-\rho\left (\psi E(n)\psi^{-1}\right )}
\]
is certainly $o(1)$ as $n$ goes to $\infty$. Thus Proposition~\ref{p:sufficient_estimate} applies and the sequence $g_{\psi F(n)\psi^{-1}}$ for $F(n)=E(n)^{-1}E(n)$ converges $C^0$ to the identity over a complex disc of size $o(1/\rho\left (\psi E(n)\psi^{-1})\right )$ around $x_0$.
\end{proof}

\subsection{Step 3: Chain of commutators}
\label{ssc:commutators}

\paragraph{Strategy --}
As we have already explained, Proposition~\ref{p:ghys} implies that if two diffeomorphisms $f_1,f_2$ in $G$ are $\eps_0$-close to the identity over a small interval, then the sequence of commutators $f_{k+2}=[f_{k+1},f_k]$ must be eventually trivial, since $G$ is locally discrete. We want to get a contradiction, finding two elements $f_1$ and $f_2$ which are locally $\eps_0$-close to $id$, generating a free subgroup in $G$. The main result in this third step is the following:

\begin{prop}\label{p:step3}
	Let $F(n), n\in\N$, be the family of subsets introduced in \eqref{eq:defFn}.
Given $\eps_0>0$, there exists $\psi_1,\psi_2\in G$ and $n$ such that the elements $f_1=g_{\psi_1F(n)\psi_1^{-1}}$ and $f_2=g_{\psi_2 F(n)\psi_2^{-1}}$ (Notation~\ref{notation}) satisfy the following two properties:
\begin{enumerate}
\item they are both $\eps_0$-close to the identity in the $C^0$ topology when restricted to a certain complex neighbourhood of $x_0\in\NE$,
\item the elements $f_3=[f_1,f_2]$ and $f_4=[f_2,f_3]$ generate a free group.
\end{enumerate} 
\end{prop}

Before starting the proof, let us describe the general strategy.
By Corollary~\ref{c:close}, for any $\psi_1,\psi_2\in G$ there exists $n$ such that the elements $f_1=g_{\psi_1F(n)\psi_1^{-1}}$ and $f_2=g_{\psi_2 F(n)\psi_2^{-1}}$ are both locally $\eps_0$-close to the identity in the $C^0$ topology when restricted to some complex neighbourhood of $x_0\in \NE$. 
By the ping-pong Proposition~\ref{l:ping-pong}, if $f_1$ and $f_2$ have disjoint invariant sets for the action on the Bass-Serre tree, then $f_3$ and $f_4$ generate a free subgroup in $G$, and the proof is over. 

\paragraph{Reduced forms for elements in $F(n)$ --}
Let $A(n)$ be the set defined as in \eqref{eq:A(n)}. Here we consider elements in the set
\[F(n)=A(n)\cup A(n)^{-1}\cup A(n)^{-1}A(n).\]
Each element in $A(n)$ can be written in the reduced form~\eqref{eq:form_good}. Also, if an element is in $A(n)^{-1}$, then its inverse is in $A(n)$.
It remains to describe the elements in $A(n)^{-1}A(n)$.

\begin{lem}\label{l:reducedA(n)-A(n)}
Let $g\in A(n)^{-1}A(n)$ be an element which does not belong to the ball $B_1(3 R_1')$ of radius $3R_1'$ in $G_1$. Then there exist elements $s,t\in G_1$ and an element $w\in G$ such that:
\begin{itemize}
\item $s,t\in B_1(R_1')\setminus Z$,
\item a reduced form representing $w$ starts and ends with a letter in $G_2\setminus Z$,
\item $g=swt$.
\end{itemize}
\end{lem}

\begin{proof}
As $g$ belongs to $A(n)^{-1}A(n)$, we can write $g$ as
\begin{equation}\label{eq:formAn-An}
g=s_1^{-1}\sigma^{-1}\cdots s_n^{-1}\sigma^{-1}\sigma t_n\cdots \sigma t_1,
\end{equation}
with $s_i,t_i\in B_1(R_1')$ and $\sigma\in G_2$ our fixed element. The problem is that the expression \eqref{eq:formAn-An} is not reduced: clearly the subword $\sigma^{-1}\sigma$ in the middle represents the identity, but there could be further central simplifications. For this, after erasing $\sigma^{-1}\sigma$, we look at the new middle subword $s_n^{-1}t_n$. It represents an element in $G_1$; if it does not belong to $Z$, then the expression
\[
g=s_1^{-1}\sigma^{-1}\cdots \sigma^{-1}(s_n^{-1} t_n)\sigma \cdots \sigma t_1,
\]
is already reduced; otherwise the subword $\sigma^{-1}s_n^{-1} t_n\sigma$ represents an element in $G_2$, and we have similar further cases to analyze. Proceeding in this way, we end up with a word $w$ such
that $g=s_1^{-1}wt_1$, and there are two possibilities:
\begin{enumerate}
\item the element $w$ is not in $Z$, and in this case we have that a reduced form representing it starts and ends with a letter in $G_2\setminus Z$,
\item or $w\in Z$ and thus $g=s_1^{-1}wt_1\in B_1(R_1')ZB_1(R_1')$ belongs to the ball $B_1(3R_1')$ (the choice of the radius $R_1'$ implies in particular that $B_1(R_1')\supset Z$).
\end{enumerate}
Because of our assumption on $g$, only the first possibility may happen, whence we get the properties of the statement, with $s=s_1^{-1}$ and $t=t_1$.
\end{proof}

\paragraph{Conjugation --} Here we determine good choices of $\psi$ so that elements in $\psi F(n)\psi^{-1}$ are suitable for ping-pong.

\begin{prop}\label{p:start-geod}
Fix $x\in G_1\setminus Z$ and $y\in G_1\setminus B_1(2R_1')$. Consider the element $\psi=x\sigma y$. Then for any element $g\in \psi \left (F(n)\setminus B_1(3R_1')\right )\psi^{-1}$, the first letter of $g$ is in $Zx^{-1}$.

In other words, if $\pi$ denotes the geodesic path going from the vertex $G_1$ to $G_1g$ in the Bass-Serre tree of $G$, then the first edge of $\pi$ is $(G_1,G_2 x^{-1})$.
\end{prop}

\begin{proof}
As $g\in \psi F(n)\psi^{-1}$, there exists an element $h\in F(n)$ such that
$g=\psi h\psi^{-1}$. We separate our discussion into two cases:
\begin{enumerate}
\item the element $h$ is in $A(n)\cup A(n)^{-1}$,
\item the element $h$ is in $A(n)^{-1}A(n)$.
\end{enumerate}
Suppose we are in the first situation, and suppose $h\in A(n)$ (the other case being similar). We write
\[
h=\sigma t_n\cdots \sigma t_1,
\]
thus
\begin{align*}
g&=\psi h\psi^{-1}\\
&= x\sigma y \sigma t_n\cdots \sigma t_1 y^{-1}\sigma^{-1}x^{-1}.
\end{align*}
We look at the subword $t_1y^{-1}$ appearing in the last expression: after our assumption on $y$, we have that the product $t_1y^{-1}$ is in $G_1$, but it does not belong to $Z$, otherwise we would have
$t_1y^{-1}\in Z\subset B_1(R_1')$ and thus $y^{-1}\in B_1(R_1')B_1(R_1')\subset B_1(2R_1')$, against our assumption.

Hence the writing 
\[g=x\sigma y \sigma t_n\cdots \sigma (t_1 y^{-1})\sigma^{-1}x^{-1}\]
is in reduced form, and it clearly starts with $x^{-1}$. If we consider another reduced form representing $g$, then we can replace the letter $x^{-1}$ by another letter in $Zx^{-1}$ (see Remark~\ref{r:normal}).

\smallskip

If we are in the second situation, the previous Lemma \ref{l:reducedA(n)-A(n)} says that we can write $h=swt$, with $s,t\in B_1(R_1')\setminus Z$ and $w$ such that a reduced form representing it starts and ends with a letter in $G_2\setminus Z$.
Hence
\[
g=\psi g\psi^{-1}=x\sigma y\,swt\,y^{-1}\sigma^{-1}x^{-1}. 
\]
Arguing as before, we get that both subwords $ys$, $ty^{-1}$ are in $G_1\setminus Z$. Therefore $g$ is represented by the reduced form
\[
g=\psi g\psi^{-1}=x\sigma (ys)w(ty^{-1})\sigma^{-1}x^{-1},
\]
and we conclude as in the previous situation.

\smallskip

The last statement about the geodesic $\pi$ is now a direct consequence of Remark~\ref{r:normal}.
\end{proof}

\begin{cor}\label{c:choose-psi12}
Take $y\in G_1\setminus B_1(2R_1')$.
If $x_1,x_2\in G_1\setminus Z$ are such that $G_2x_1^{-1}\neq G_2x_2^{-1}$, then letting
\[
\psi_1=x_1\sigma y,\quad \psi_2=x_2\sigma y,
\]
for any
\[
g_1\in \psi_1\left (F(n)\setminus B_1(3R_1')\right )\psi_1^{-1},\quad
g_2\in \psi_2\left (F(n)\setminus B_1(3R_1')\right )\psi_2^{-1},
\]
the invariant sets $X(g_1)$, $X(g_2)$ are disjoint.
\end{cor}

\begin{proof}
It follows directly from Propositions \ref{p:start-geod} and \ref{p:position}.
\end{proof}

\paragraph{End of the proof --} We are now in position to prove Proposition~\ref{p:step3}.

\begin{proof}[Proof of Proposition~\ref{p:step3}]
Consider two elements $\psi_1,\psi_2\in G$ given by Corollary~\ref{c:choose-psi12}.
Given $\eps_0>0$ we take $n$ such that the elements $f_1=g_{\psi_1F(n)\psi_1^{-1}}$ and $f_2=g_{\psi_2 F(n)\psi_2^{-1}}$ are both $\eps_0$-close to the identity in the $C^0$ topology when restricted to a certain complex neighbourhood of $x_0$, which exists after Corollary~\ref{c:close}. Since the sequences $g_{\psi_i F(m)\psi_i^{-1}}$
do not belong to a finite set (the lengths $\ell_{\psi_i F(m)\psi_i^{-1}}$, defined as in Notation~\ref{notation}, go to zero as $m\to\infty$), up to consider a larger $n$, we can suppose that $f_i\notin \psi_iG_1\psi_i^{-1}$, $i=1,2$: indeed it is easy to see that the intersection $F(n)\cap G_1$ is contained in $B_1(3R_1')$ and hence is finite (see Lemma~\ref{l:reducedA(n)-A(n)}).

Similarly, up to consider a larger $n$ (or $\eps_0$ smaller), we can suppose that the orders of $f_1$ and $f_2$ is at least $3|Z|$ (possibly infinite): if a periodic element locally converges to the identity, its order must go to infinity (\textit{cf.}~\cite[Lemma 10]{FKone}).

Corollary~\ref{c:choose-psi12} guarantees that the invariant sets $X(f_1)$ and $X(f_2)$ are disjoint.
 Then, by applying the ping-pong Proposition~\ref{l:ping-pong} (the group $G$ is $|Z|$-bounded, as in the action on its Bass-Serre tree, stabilizers of edges are conjugates of $Z$), we deduce that $f_3=[f_1,f_2]$ and $f_4=[f_2,[f_1,f_2]]$ generate a free group of rank two, as desired.
\end{proof}
This also completes the proof of Theorem~\ref{mthm:1}, as explained at the end of \S~\ref{ssc:minimal}.

%%%%%%%%%%%%%%%%%%%%%%%%%%%%%%%%%%%%

\section{Theorem~\ref{t:duminy}: Duminy revisited}
\label{s:thmc}

\subsection{Ends of the group vs.~ends of Schreier graph}

\paragraph{Duminy's and Ghys' theorems --} Theorem~\ref{mthm:1} generalizes Ghys' Theorem~\ref{thm:Ghys}, that describes groups acting with an exceptional minimal set, to minimal actions with non-expandable points. 
Our second result, Theorem~\ref{mthm:2},  also goes in this direction. As it will appear clear from the proof, the orbit of a non-expandable point plays the role of the gaps associated with an exceptional minimal set.
In this analogy, the non-expandable point is identified with a maximal gap which cannot be expanded.

\begin{ex}
	If we think of classical Fuchsian groups, actions with an exceptional minimal set (usually called Fuchsian groups \emph{of the second kind}) are semi-conjugate to minimal actions (Fuchsian groups \emph{of the first kind}). Geometrically, given a infinite volume hyperbolic surface $\mathbf{H}^2/\Gamma_0$, the semi-conjugacy is realized by contracting all infinite volume ends (topologically circular boundary components) to cusps, so to obtain a new hyperbolic surface $\mathbf{H}^2/\Gamma$ of finite volume. Here, the groups $\Gamma_0$ and $\Gamma$ are isomorphic (and free). The deformation also goes in the reverse way: given a non compact hyperbolic surface of finite volume, we can deform it by making cusps become infinite volume ends.
\end{ex}
In this perspective,  Theorem \ref{mthm:2} is the natural analogue of the celebrated Duminy's theorem \cite{navas2006}:

\begin{thm}[Duminy]
	\label{Duminy}
	Let $G\subset \Diff_+^{2}(\T)$ be a finitely generated subgroup acting on $\T$ with an exceptional minimal set $\Lambda$. Consider a connected component (a gap) $J_0$ of $\T\setminus\Lambda$. Then the Schreier graph of the orbit $X=G \cdot J_0$ has infinitely many ends.
	
	In the particular case where $G\subset \Diff_+^{\omega}(\T)$, this implies that the group $G$ itself has infinitely many ends.
\end{thm}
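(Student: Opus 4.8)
The plan is to show that for every $N$ one can find a finite set of vertices of the Schreier graph $\Sch(X,\mathcal G)$ of the gap orbit $X=G.J_0$ whose removal leaves at least $N$ infinite connected components. Since a Schreier graph is in general not quasi-transitive (the stabiliser of a gap need not be normal), one cannot appeal to the $0,1,2,\infty$ trichotomy and must exhibit the ends by hand. Before anything, I would record the two elementary geometric constraints that drive the whole argument: the gaps of $\Lambda$ are pairwise disjoint intervals, so $\sum_I|I|\le 1$ and in particular only finitely many gaps of $X$ are longer than any fixed $\eps>0$; and, since $\Lambda$ is the unique minimal set, the endpoints of the gaps belonging to $X$ are dense in $\Lambda$.

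The dynamical input is Sacksteder's theorem, which furnishes an element $f\in G$ with a hyperbolic fixed point $p\in\Lambda$. Using the contraction near $p$ together with minimality of the action on $\Lambda$, I would build a ping-pong (Schottky) pair $a,b\in G$: two elements contracting the circle onto intervals with disjoint closures, so that $\langle a,b\rangle$ is free and, for each reduced word $w$, the gap $wJ_0$ is confined to the ping-pong region prescribed by the first letter of $w$. The quantitative heart here is the control of the affine distortion recalled in \S\ref{ssc:close}: the bound $\varkappa(g_n\cdots g_1;J)\le C_{\mathcal G}\sum_i|g_i\cdots g_1(J)|$ guarantees that as the length of $w$ grows the gap $wJ_0$ shrinks geometrically and stays deep inside its region, preventing the tree-like configuration from folding back on itself.

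The sub-orbit $\{wJ_0\mid w\in\langle a,b\rangle\}$ then realises the $4$-regular tree (the Cayley graph of the rank-two free group) coarsely inside $\Sch(X,\mathcal G)$, the tree edges being the elements $a^{\pm1},b^{\pm1}$ of bounded length, hence of bounded Schreier distance; abstractly this tree has infinitely many ends. The step I expect to be the main obstacle is to show that distinct ends of this tree remain distinct ends of the \emph{ambient} Schreier graph. Concretely, I would remove the finite set $K_\eps$ of gaps of length $\ge\eps$ and argue, via the confinement and the geometric shrinking above, that a path made of generator-edges cannot pass from a deep gap lying in the $a$-region to one lying in the $b$-region without crossing $K_\eps$; letting $\eps\to 0$ and counting reduced words of a fixed length yields, for each $N$, a cut with at least $N$ infinite components. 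This separation estimate is precisely where the control of distortion (and thus the $C^2$ hypothesis) is indispensable, and it is also the feature that will transpose to Theorem~\ref{mthm:2}, the non expandable point $x_0$ playing the role of $J_0$.

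Finally, for the analytic refinement I would invoke Hector's lemma (Theorem~\ref{t:hector}): the stabiliser $\stab{G}{J_0}$ fixes both endpoints of $J_0$, hence is trivial or infinite cyclic. When it is trivial, $\Sch(X,\mathcal G)$ is the Cayley graph of $G$ and the conclusion is immediate. When it is infinite cyclic, $\Sch(X,\mathcal G)$ is the quotient of the Cayley graph by a two-ended subgroup, and one checks that the number of ends of such a coset graph is finite whenever $G$ is one-ended. Since we have just shown $\Sch(X,\mathcal G)$ has infinitely many ends, and since $G$ is neither finite (it acts with a Cantor minimal set) nor virtually cyclic (by Denjoy's theorem a virtually cyclic group of class $C^2$ admits no exceptional minimal set), the group $G$ can only have infinitely many ends.
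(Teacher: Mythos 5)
The strategy you propose --- realise a ping-pong free subgroup $\langle a,b\rangle$ and argue that the tree of gaps $wJ_0$ survives as infinitely many ends of the ambient Schreier graph --- breaks down precisely at the step you yourself flag as the main obstacle, and the fix you propose does not work. Adjacency in $\Sch(X,\mathcal G)$ is given by arbitrary generators $s\in\mathcal G$, and such an $s$ is a diffeomorphism of the whole circle: it can carry a tiny gap lying deep in the $a$-region to a tiny gap lying deep in the $b$-region in a single edge. Neither endpoint of that edge belongs to your cut set $K_\eps$ of gaps of length $\ge\eps$, so removing $K_\eps$ does not disconnect the two regions; confinement of $wJ_0$ on the circle and geometric shrinking say nothing about Schreier adjacency, because the position of a gap on $\T$ is not an invariant of the graph metric. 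More structurally, even a genuine quasi-isometric embedding of the $4$-regular tree would not suffice (that tree embeds quasi-isometrically in the one-ended hyperbolic plane), so some quantity that is almost conserved along paths of small gaps is unavoidable if one wants to separate ends.

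That conserved quantity is exactly what the paper's proof supplies and your proposal lacks: the mean nonlinearity $N(g(J_0))=\int_{J_0}\mN(g)$, well defined modulo $b=\int_{J_0}\mN(h)$ where $h$ generates $\stab{G}{J_0}$ (Hector's lemma, Theorem~\ref{t:hector}). The distortion bound you quote gives $|N(f(J))-N(J)|\le C_{\mathcal G}\sum_i|g_i\cdots g_1(J)|$, and since $\sum_{J\in X}|J|\le 1$ this makes $N$ constant on ends (Lemma~\ref{nonldistinguishends}). Even with this tool the paper does not exhibit ends directly: it argues by contradiction that finitely many ends would force, in the Sternberg linearising chart at the Sacksteder fixed point, every holonomy map to have vanishing mean nonlinearity on all sufficiently small gaps, hence (by analytic continuation) to be affine, so that $G$ would preserve an affine structure --- impossible for a group preserving a Cantor set. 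Your final paragraph (stabiliser of $J_0$ trivial or infinite cyclic, hence infinitely many ends of $\Sch(X,\mathcal G)$ pass to the group) matches the paper's appeal to Ghys's Corollaire 2.6 and is fine, but it rests on the unproved main claim.
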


\paragraph{Duminy's Theorem and property $\pstar$ --} In lower regularity, the statement of Theorem~\ref{mthm:2} cannot hold, as one sees from 
the example of Thompson's group $T$. However, there is an intermediate result, on which Theorem~\ref{mthm:2} relies, that still 
holds for $C^r$ minimal non-expandable actions ($r\ge 3$):

\begin{thmA}\label{t:duminy}
	Let $G\subset \Diff_+^3(\T)$ be finitely generated subgroup of $C^r$ diffeomorphisms, such that the 
	action of $G$ is minimal, satisfies property $\pstar$ and has a non-expandable point $x_0\in \T$. Then
	the Schreier graph of the orbit of $x_0$ has infinitely many ends. 
\end{thmA}

The best plausible extension of the theorem above would be the following:

\begin{conj}\label{conj:groupoid}
	Under the hypotheses of Theorem~\ref{t:duminy}, the groupoid of germs $G_{x_0}$ has infinitely many ends. 
\end{conj}

In the statement of the conjecture, one could take for $G_{x_0}$ the groupoid of germs defined on a right or left neighbourhood of the orbit of $x_0$. A local $C^r$ diffeomorphism representing a germ in $G_{x_0}$ is defined on a right (or left) neighbourhood of a point in the orbit of $x_0$. In the following, we keep the convention of considering $G_{x_0}$ as the groupoid of \emph{right} germs. 

Despite our many efforts, we have not been able to prove Conjecture~\ref{conj:groupoid} in all its generality. However, we have the following result which will be enough for Theorem~\ref{mthm:2}:

\begin{thmA}\label{t:ends_germs}
	Let $G\subset \Diff^\omega_+(\T)$ be a finitely generated subgroup of $C^\omega$ diffeomorphisms, such that the action of $G$ is minimal, has property $\pstar$ and a non-expandable point $x_0\in \T$. Then $G$ has  infinitely many ends.
\end{thmA}

\begin{rem}
	It is important to stress that the assumption for $C^3$ regularity is unavoidable for our proof of Theorem~\ref{t:duminy}. Indeed, we are able to offer a proof only using control on the \emph{projective distortion} of the elements of the group, which classically uses the Schwarzian derivative and hence requires three derivatives.  However, we hope that Theorem \ref{t:duminy} can be generalized to actions of class~$C^2$.
\end{rem}

\paragraph{Proof of Theorem~\ref{mthm:2} from Theorems~\ref{mthm:1} and \ref{t:ends_germs} --}

Let $G\subset \Diff_+^\omega(\T)$ be a subgroup with property $\pstar$. If the set of non-expandable points $\NE=\NE(G)$ is empty, then Deroin's Theorem~\ref{t:Deroin2} implies that $G$ is $C^\omega$ conjugate to a finite central extension of a cocompact Fuchsian group.
If $G$ has an exceptional minimal set, then Ghys' Theorem~\ref{thm:Ghys} implies that $G$ is virtually free.
Therefore, we are left to suppose that $G$ acts minimally with non-expandable points. In this case we apply Theorem~\ref{t:ends_germs}: this gives that the $G$ has infinitely many ends. Since $G$ has property $\pstar$ and $\NE$ is not empty, we apply Theorem~\ref{mthm:1} and get that in the latter case $G$ is virtually free.

\subsection{Warm up: Duminy's theorem in analytic regularity}
\label{s:duminy}

In its full generality (namely, codimension-one foliations that are transversally of class $C^2$), the proof of Duminy's Theorem is a gemstone (a complete proof appears in \cite[\S{3}]{Navas2011}). Here we discuss the case of finitely generated groups of real-analytic diffeomorphisms. In this context, a similar proof was apparently already known to Hector. The proof is relatively simple because in $C^\omega$ regularity we can use Hector's lemma, but it is enlightening enough in view of the proof of Theorem~\ref{t:duminy}.

\begin{thm}[Duminy -- $C^\omega$ case]\label{t:duminy_analytic}
Let $G\subset \Diff_+^{\omega}(\T)$ be a finitely generated subgroup acting on $\T$ with an exceptional minimal set $\Lambda$. 
Let $J_0$ be a connected component of $\T\setminus \Lambda$ (a ``gap''). Then the Schreier graph $\Sch(X,\mathcal G)$ of the orbit of gaps $X=G\cdot J_0$ has infinitely many ends.

This implies that the group $G$ itself has infinitely many ends.
\end{thm}

\begin{proof}
We will prove that if the conclusion fails to be true, then $G$ preserves an \emph{affine structure} on $\T$. This is done by using control of the affine distortion 
of well chosen maps. The relevant tool to do this is the \emph{nonlinearity} of a diffeomorphism of the line: If $f:I\to J$ is a $C^2$ diffeomorphism of one 
dimensional manifolds, let
\[\mN(f)=\frac{f''}{f'}.\]
The nonlinearity of a map vanishes if and only if the map is affine. Moreover, this nonlinearity operator satisfies the cocycle relation 
\begin{equation}\label{eq:nonl-cocycle}
\mN(f\circ g)=g'\mN(f)\circ g +\mN(g).
\end{equation}

\smallskip

The first step of the proof is to use the nonlinearity to find a criterion for distinguishing different ends in the Schreier graph $\Sch(X,\mathcal G)$.
Recall that the stabilizer of $J_0$ is generated by some $h\in G$ (\textit{cf.}~Theorem \ref{t:hector}). We set $b=\int_{J_0}\mathcal N(h)$.
\begin{pdfn} Assume we are under the hypotheses of Theorem~\ref{t:duminy_analytic}.
The function
\begin{equation}\label{functionN}
\dfcn{N}{X}{\R/b\Z}{g(J_0)}{\int_{J_0}\mathcal N(g)}
\end{equation}
is well defined along the orbit $X$, and verifies
\begin{equation}\label{eq:nonlQ-increment}
N(f(J)) = N(J) + \int_{J} \mathcal N(f) \quad\text{for all }J\in X\text{ and all }f\in G.
\end{equation}
\end{pdfn}
\begin{proof}
If two elements $g_1$ and $g_2$ are such that $g_1(J_0)=g_2(J_0)$, then there exists some $k\in\Z$ such that $g_2=g_1h^k$. 
To verify that the function $N$ is well defined, we have to show that for a fixed $g\in G$, all the integrals $\int_{J_0}\mathcal N(gh^k)$ are equal modulo $b\Z$.

Using the cocycle relation \eqref{eq:nonl-cocycle} and the change of variable formula, we have
\begin{align*}
\int_{J_0}\mN(gh^k)&=\int_{J_0}(h^k)'\mN(g)\circ h^k+\sum_{i=0}^{k-1}\int_{J_0}(h^i)'\mN(h)\circ h^i\\
&=\int_{h^k(J_0)}\mN(g)+\sum_{i=0}^{k-1}\int_{h^i(J_0)}\mN(h),
\end{align*}
which is equal to $\int_{J_0}\mN(g)+k\int_{J_0}\mN(h)=\int_{J_0}\mN(g)+kb$. This proves the first assertion. The relation~\eqref{eq:nonlQ-increment} can be verified in a similar way.
\end{proof}

If $f$ is written in the form $f=g_n\cdots g_1$ in the generating system $\mathcal G$, then similarly to \eqref{eq:intermediate} we obtain the bound
\begin{equation}\label{eq:nonl-intermediate}
|N(f(J))-N(J)|\le C_\mathcal{G}\sum_{i=0}^{n-1}|g_i\cdots g_1(J)|
\end{equation}
with respect to the same constant $C_\mathcal G:=\max_{g\in \mathcal G\cup \mathcal G^{-1}}\sup_{\T} |g''/g'|$. 
From this fact it is not difficult to prove the following lemma 
which provides a criterion to distinguish ends of the Schreier graph of $J_0$. It is close to the original ideas of Duminy; see for example  \cite[Lemma 3.4.2]{Navas2011}.
Recall that the ends, and the fact that a sequence converges to a certain end, are independent of the finite system of generators of the group.

\begin{lem}
\label{nonldistinguishends}
Assume we are under the hypotheses of Theorem~\ref{t:duminy_analytic}.
\begin{enumerate}[\bf i.]
\item If $(J_n)_{n\in\N}$ is a sequence of gaps which goes to an end in the Schreier graph $\Sch(X,\mathcal G)$, then $\lim_{n\to\infty}N(J_n)$ exists.
\item If $(I_n)_{n\in\N}$ and $(J_n)_{n\in\N}$ determine the same end in the Schreier graph $\Sch(X,\mathcal G)$, then
\[\lim_{n\to\infty}N(I_n)=\lim_{n\to\infty}N(J_n).\]
\end{enumerate}
\end{lem}

\begin{proof}
It is enough to prove the first assertion for $J_n= g_{n}\cdots g_1(J_0)$, where $(g_n)_{n\in\N}$ is a sequence of elements of the (symmetric) 
system of generators of $\mathcal{G}$. To do this, notice that (\ref{eq:nonl-intermediate}) easily shows 
that the sequence $(N(J_n))_{n\in\N}$ is a Cauchy sequence, and hence converges.

To show the second assertion, given $\eps>0$, let $n_0$ be such that $\sum_{J\notin X({n_0})}|J|<\eps$, where $X({n_0})$ denotes the set of those $x\in X$ at distance no greater than $n_0$ to $J_0$ for the word distance in $X$. If $n$ is large enough, there exists a path linking $I_n$ and $J_n$ which avoids $X({n_0})$. A direct application of (\ref{eq:nonl-intermediate}) yields $|N(I_n)-N(J_n)| < \varepsilon C_{\mathcal{G}}$. Since $\eps$ is arbitrary, this concludes the proof.
\end{proof}

From now on, we suppose that $\Sch(X,\mathcal G)$ has only one end and look for a contradiction. The general case when the Schreier graph has finitely many ends can be treated similarly, as we detail in the proof of Lemma~\ref{projectiveholonomy}.

\smallskip

The second step relies on Sacksteder's theorem: there exists a local hyperbolic contraction, \textit{i.e.}~$f\in G$, $I\subset \T$ and $p\in I$ with $f'<1$ on $I$ and $f(p)=p$. Using Sternberg's (or in this case K\oe nigs-Poincar\'e's) linearization theorem, we can make a $C^\omega$ change of coordinates on $I$ and suppose that $f$ is a homothety of ratio $\mu=f'(p)$.

A way to describe an end of $\Sch(X,\mathcal G)$ is to pick some gap $J\subset I\cap G\cdot J_0$ and iterate it by $f$. Using the cocycle relation \eqref{eq:nonlQ-increment}, we find
\begin{equation}\label{nonl-limit}
\lim_{n\to\infty}N(f^n(J))=N(J),
\end{equation}
for $f$ is affine and thus its nonlinearity is $0$.

We want to prove that if there is one only end in $\Sch(X,\mathcal G)$, in this chart we have \emph{affine holonomy}: every element $\gamma\in G$ satisfying $I_\gamma=\gamma^{-1}(I)\cap I\neq \emptyset$ has to be an affine map. Note that by minimality of $\Lambda$, the union of gaps $I_{\gamma}\cap G \cdot J_0$ is dense in $I_{\gamma}$. So let $J\subset I_{\gamma}\cap G\cdot J_0$ be a gap. Since $J\subset I_{\gamma}$, we also have $\gamma(J)\subset I$.

If $\gamma\in G$  maps $J$ inside $I$ and is not a power of $f$, then the iterates of $\gamma(J)$ by $f$ also go towards the one only end of $\Sch(X,\mathcal G)$ and after \eqref{nonl-limit} we must have $N(J)=N(\gamma(J))$. Using \eqref{eq:nonlQ-increment} again, the latter implies $\int_J\mathcal N(\gamma)=0$ (supposing the gap $J$ sufficiently small, \textit{cf.}~the proof of Lemma~\ref{projectiveholonomy}).

We have just shown that the mean nonlinearity of $\gamma$ over every sufficiently small gap in $I_\gamma\cap G\cdot J_0$ vanishes. By continuity, there is a point $x_J$ in every such gap $J$, at which the nonlinearity $\mathcal N(\gamma)$ is zero. Observe that the points $x_J$ accumulate on $\Lambda\cap I_\gamma$. By the analytic continuation principle, $\gamma$ is affine on $\T$.

\smallskip

We remark that a non-abelian subgroup of automorphisms of some affine structure on $\T$ must have a finite number of globally periodic points and thus cannot  preserve a Cantor set, leading to a contradiction. Therefore, the Schreier graph $\Sch(X,\mathcal G)$ has infinitely many ends. 

\smallskip

It remains to show that the group itself has infinitely many ends. This requires some additional work: the map $\pi:g\in G\mapsto g(J_0)\in X$ defines a non-regular covering from the Cayley graph of $G$  to the the Schreier graph $\Sch(X,\mathcal G)$. The number of leaves usually bad-behaves when passing to covering spaces, unless the nontrivial monodromy of the covering is compactly supported.

The following lemma is classical in foliation theory (see \cite[Corollary~4.8]{CC-Duminy}):

\begin{lem}\label{l:nonl_compact_support}
Assume we are under the hypotheses of Theorem~\ref{t:duminy_analytic}. There exists $\eps>0$ such that the following holds.
Consider a gap $J$ in the orbit of $J_0$. Let $g\in G$ be an element that stabilizes $J$ and suppose that $g$ can be written in the form $g=g_n\cdots g_1$ in the generating system $\mathcal G$. Suppose that the intermediate images of the gap satisfy
\[
\sum_{i=0}^{n-1}|g_i\cdots g_1(J)|<\eps.
\]  
Then $g$ is the identity.
\end{lem}

Finally, arguing as in \cite[Corollaire 2.6]{euler}, we can deduce that the group $G$ has infinitely many ends. Indeed, consider the class of the loop defined by the stabilizer $h\in \stab{G}{J_0}$ in the fundamental group $\pi_1(\Sch(X,\mathcal{G}),J_0)$. After Lemma~\ref{l:nonl_compact_support}, it defines a nontrivial element in the image of the natural morphism $H^1_c(\Sch(X,\mathcal{G}),\Z)\to H^1(\Sch(X,\mathcal{G}),\Z)$. The covering $\pi:G\to \Sch(X,\mathcal{G})$ is exactly the covering associated with this element. We deduce that $G$ has infinitely many ends. 
\end{proof}

\subsection{Strategy of the proof of Theorem \ref{t:duminy}} In the setting of minimal actions with non-expandable points,   the strategy we adopt is similar to that of the proof of Duminy's theorem described above.

However, in our setting, it is not an invariant affine structure, but an invariant \emph{projective structure} that we intend to build. The relevant quantity is no longer the nonlinearity, but the \emph{Schwarzian derivative} of diffeomorphisms of one-dimensional manifolds. 

\smallskip

The first step of the proof will be to use a control of the \emph{projective} distortion. Instead of using gaps of Cantor sets,  we substitute them by considering the orbit $X$ of a non-expandable point $x_0\in\NE$. The control of the distortion is ensured by taking advantage of the Markov partition for groups acting minimally with property~$\pstar$, whose construction has been described in~\S~\ref{sc:Markovp}.

This allows to define a function $Q$ on the orbit $X$ of $x_0$, that we call the \emph{Schwarzian energy} and is analogue to \eqref{functionN}. As for the function $N$, the Schwarzian energy has a well-defined extension to the space of ends of the Schreier graph $\Sch(X,\mathcal G)$ (Lemma~\ref{distinguishends}).

\smallskip

Secondly, we first suppose that the Schwarzian energy takes only finitely many values on the space of ends of $\Sch(X,\mathcal G)$. We obtain an intermediate result, that it is interesting on its own: the group is $C^r$ conjugate to a subgroup of some finite covering of $\PSL(2,\R)$ (Theorem~\ref{t:rigidity}). The strategy follows the lines of our proof of Duminy's Theorem.
As above we take an element with a hyperbolic fixed point; using Sternberg's linearization theorem, this allows one to construct a chart with projective holonomy (see Lemma~\ref{projectiveholonomy}). Using the minimality of the action, we extend this chart to a projective structure: this is Lemma~\ref{projstructure}. Finally, relying on Kuiper-Goldman's classification of the automorphisms groups of a projective structure on $\T$, we find that the group is virtually a discrete subgroup of $\PSL(2,\R)$, with non-expandable points, and thus virtually free.

Finally, we put all the pieces together and prove Theorem~\ref{t:duminy} in \S~\ref{ssc:proofC}

\subsection{Distinguishing different ends: control of the projective distortion}

We assume that $G$ has property  $(\star)$ and that there exists $x_0\in\NE$. Our goal is to show that $G$ has infinitely many ends: here we present a criterion to distinguish two different ends.

\paragraph{Distortion control --}
From \cite[Lemma 5]{FK2012_C_eng} we have:
\begin{lem}\label{l:stab}
The stabilizer $\mathrm{Stab}_{G}(x_0)$ (in the $C^2$ setting, considered as the group of one-sided germs) is an infinite cyclic group, generated by some $h\in G$.
\end{lem}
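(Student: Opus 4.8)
The plan is to reduce the statement to the classical rigidity of germs at a parabolic fixed point. First I would observe that $S:=\stab{G}{x_0}$ is nontrivial: property $\pstar$ furnishes $g_+,g_-\in G$ fixing $x_0$ (the point $x_0$ being an isolated fixed point from the right, resp.\ from the left). Next, every $g\in S$ is tangent to the identity at $x_0$. Indeed, since $x_0\in\NE$ we have $g'(x_0)\le 1$; but $g^{-1}\in S$ as well, so $(g^{-1})'(x_0)=1/g'(x_0)\le 1$, which forces $g'(x_0)=1$. Thus $S$ consists of germs of $C^2$ diffeomorphisms tangent to the identity at $x_0$, and it suffices to analyse the one-sided germs on the right of $x_0$, the left side being entirely symmetric (with $g_-$ in place of $g_+$).

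The core is to prove that the right-germ group $S^+$ acts freely on a punctured neighbourhood $(x_0,x_0+\eps)$, after which H\"older's theorem shows that $S^+$ is abelian and that the \emph{translation number} relative to a fixed-point-free element order-embeds $S^+$ into $(\R,+)$. The natural fixed-point-free element is $g_+$: by property $\pstar$ (equivalently, by the Markov data of Theorem~\ref{t:Markov2}, where $x_0$ is the unique, topologically repelling, fixed point of $g_{I^+}$ on $I^+$), the map $g_+$ has no fixed point on some interval $(x_0,b)$, and up to replacing it by its inverse it contracts $(x_0,b)$ towards $x_0$. To establish freeness I would invoke Kopell's lemma: a nontrivial $h\in S^+$ with fixed points accumulating at $x_0$ would, after conjugation by the high powers $g_+^{n}$ and by the standard $C^2$ control of distortion for contractions, yield \emph{nontrivial} germs converging to the identity in the $C^1$ topology on a fixed neighbourhood of $x_0$; this is incompatible with the local discreteness encoded in the isolated fixed-point property of $\pstar$. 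This freeness step is where the $C^2$ hypothesis and the bounded-distortion estimates really enter, and I expect it to be the main obstacle: the group $S^+$ is not known to be abelian \emph{a priori}, so the commutativity needed to apply Kopell cleanly has to be extracted from the dynamics rather than assumed.

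Finally, I would upgrade the embedding $S^+\hookrightarrow\R$ to the desired conclusion by a discreteness argument. The element $g_+$ has nonzero translation number, so the image of $S^+$ in $\R$ is a nontrivial subgroup; moreover the same distortion mechanism that produced freeness forbids this image from accumulating at $0$, whence it is discrete and therefore infinite cyclic. The right-germ representation is faithful on $S$ (immediately by analytic continuation when $G\le\Diff^\omega_+(\T)$, and otherwise because an element of $S$ that is trivial on one side germ cannot be dynamically separated from the identity), so combining this with the symmetric analysis on the left one concludes that $S=\stab{G}{x_0}$ is generated by a single element $h$ — namely the one realising the smallest positive translation number — and is thus infinite cyclic, as claimed.
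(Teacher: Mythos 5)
First, a remark on the comparison you asked for: the paper does not prove this lemma at all — it is imported verbatim from \cite[Lemma 5]{FK2012_C_eng} — so there is no internal argument to measure yours against. Your opening moves are correct and are in fact used elsewhere in the paper: non-triviality of $\stab{G}{x_0}$ follows from property $\pstar$, and the observation that every $g\in\stab{G}{x_0}$ satisfies $g'(x_0)=1$ (apply non-expandability of $x_0$ to both $g$ and $g^{-1}$) is exactly the argument given in \S 4.3 for the well-definedness of the energy $\cE$.

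The genuine gap is the freeness step, which you locate correctly but do not close. Kopell's lemma needs $h$ to commute with the contraction $g_+$; your substitute is to let the conjugates $g_+^{-n}hg_+^{n}$ tend to the identity and invoke ``the local discreteness encoded in the isolated fixed-point property of $\pstar$''. This is not available here: the lemma lives in the $C^2$/$C^3$ setting of Theorems~\ref{mthm:2} and~\ref{t:duminy}, where local discreteness is neither a hypothesis nor a known consequence of $\pstar$ together with $\NE\neq\emptyset$ (that implication is precisely part of the Main Conjecture), and property $\pstar$ constrains only the two specific elements $g_\pm$, saying nothing about an arbitrary $h\in\stab{G}{x_0}$ having $x_0$ as an isolated fixed point. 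Without commutativity the maps $g_+^{-n}hg_+^{n}$ are pairwise distinct germs, the Kopell distortion estimate (which sums the lengths of the iterates of a fundamental domain) no longer identifies them with $h$, and even if one proved they converge to the identity, no contradiction would follow. Two further assertions are of the same nature and also unproved: (i) discreteness of the image of the translation number in $(\R,+)$ — a free abelian group of $C^2$ germs can perfectly well embed densely in the Szekeres flow of $g_+$, so excluding this requires the global Markov/expansion control of Theorem~\ref{t:Markov2} rather than purely local germ considerations; and (ii) faithfulness of the right-germ representation — without it you only obtain an embedding of $\stab{G}{x_0}$ into the product of the two one-sided germ groups, hence into $\Z\times\Z$, which is not cyclic. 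These three points are exactly where the cited reference has to do real work with the Markov partition, and your proposal leaves all of them open.
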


We introduce a function $\cE:X\to (0,1]$, that we will call the \emph{energy} (and which is, in fact, the inverse of the function defined in \cite{FK2012_C_eng}), defined on the orbit $X=G\cdot x_0$ as 
\begin{equation}\label{eq:energy}
\cE(g(x_0))=g'(x_0) \quad \text{for every } g\in G.
\end{equation}
The map is well-defined. Indeed, assume that $x=g_1(x_0)=g_2(x_0)$ for $g_1,g_2\in G$. Then the element $g_2^{-1} g_1$ fixes $x_0$. Since this point is non-expandable, we must have $(g_2^{-1} g_1)'(x_0)=1$, hence $g_1'(x_0)=g_2'(x_0)$.

The energy is strongly related to the intervals appearing in the expansion sequence (Proposition~\ref{Expprocedure}), in the following precise way: 

\begin{lem}
\label{l:energy-exp}
Let $x\in X$, and let $\mathbf g_x$ be the map defined in Proposition \ref{Expprocedure}. Then the following properties hold.
\begin{enumerate}[\bf i.]
	\item We have $\cE(x)=\mathbf g'_x(x)^{-1}$.
	\item The ratio between $\cE(x)$ and $|J_x^+|$ is uniformly bounded away from $0$ and $\infty$.
\end{enumerate}  
\end{lem}
\begin{proof}
	The first statement follow directly from the definitions. The latter follows from the former and Lemma \ref{distortionlem}.
\end{proof}

\begin{lem}\label{l:series}
The series $\sum_{x\in X} \cE(x)^2$ converges.
\end{lem}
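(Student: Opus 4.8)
The plan is to dominate the energy $\cE(x)$ of each point $x\in X$ by the length of the atom $J^+_x$ produced by the expansion procedure, and then to sum over the levels $k(x)$ using the disjointness of these atoms at a fixed level together with their geometric decay. All the dynamical work has in fact already been packaged into Proposition~\ref{Expprocedure} and Lemma~\ref{distortionlem}; what remains is to feed non-expandability into them.

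First I would record the elementary observation that converts non-expandability of $x_0$ into a bound on $\cE$. Fix $x\in X$ and write $x=g(x_0)$, so that $\cE(x)=g'(x_0)$. Let $\mathbf g_x$ be the expansion map of Proposition~\ref{Expprocedure}, which sends $x$ to a point $x_{k}\in\NE$ with $k=k(x)$ finite. The element $\mathbf g_x\,g\in G$ sends $x_0$ to $x_k$, and since $x_0\in\NE$ its derivative at $x_0$ does not exceed $1$; hence by the chain rule
\[
1\ \ge\ (\mathbf g_x\,g)'(x_0)\ =\ \mathbf g_x'(x)\cdot g'(x_0)\ =\ \mathbf g_x'(x)\,\cE(x).
\]
Combining this with the lower bound $\mathbf g_x'(x)\ge C^{-1}/|J^+_x|$ from the second assertion of Lemma~\ref{distortionlem}, I get the key estimate
\[
\cE(x)\ \le\ \frac{1}{\mathbf g_x'(x)}\ \le\ C\,|J^+_x|.
\]

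Next I would extract a geometric decay of the atoms. Each step of the expansion multiplies the derivative at the base point by at least $\lambda$ (items \textbf{ii} and \textbf{iv} of Theorem~\ref{t:Markov2}, exactly as in the proof of Proposition~\ref{Expprocedure}), so $\mathbf g_x'(x)\ge \lambda^{k(x)}$; together with Lemma~\ref{distortionlem} this yields $|J^+_x|\le C\lambda^{-k(x)}$. Now I sum over $X$, splitting according to level. By the first assertion of Lemma~\ref{distortionlem} the atoms $(J^+_x)_{k(x)=k}$ are pairwise disjoint, so $\sum_{k(x)=k}|J^+_x|\le 1$. Writing $|J^+_x|^2\le C\lambda^{-k(x)}|J^+_x|$ and using the bound on $\cE$,
\[
\sum_{x\in X}\cE(x)^2\ \le\ C^2\sum_{x\in X}|J^+_x|^2\ \le\ C^3\sum_{k\ge 0}\lambda^{-k}\sum_{k(x)=k}|J^+_x|\ \le\ C^3\sum_{k\ge 0}\lambda^{-k}\ <\ \infty,
\]
since $\lambda>1$ (the finitely many level-$0$ points of $X\cap\NE$ add only a finite amount). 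Note the partition of $X$ by level is legitimate because $x\mapsto J^+_x$ is injective: $x$ is the left endpoint of $J^+_x$, so no point of $X$ is counted twice.

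The only genuinely delicate ingredient here is the distortion estimate of Lemma~\ref{distortionlem}, which rests on the uniform control of the distortion along the whole expansion chain (Proposition~\ref{Expprocedure}.\ref{ass:dist}) supplied by the Markov structure; granting this, the passage from $\cE(x)$ to $|J^+_x|$ is forced by the single inequality $(\mathbf g_x g)'(x_0)\le 1$, and the final summation is a routine geometric series. I expect the write-up to consist essentially of the two displayed inequalities above, with the geometric decay $|J^+_x|\le C\lambda^{-k(x)}$ being the step one must state carefully.
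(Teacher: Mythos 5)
Your proof is correct and follows essentially the same route as the paper's: reduce $\cE(x)$ to $|J_x^+|$ via Lemma~\ref{distortionlem}, then sum level by level using the disjointness of the atoms $(J_x^+)_{k(x)=k}$ and the geometric bound $|J_x^+|=O(\lambda^{-k(x)})$ coming from $\mathbf g_x'(x)\ge\lambda^{k(x)}$. The only cosmetic difference is that the paper asserts the equality $\cE(x)=\mathbf g_x'(x)^{-1}$ outright, whereas you derive the (sufficient) inequality $\cE(x)\le \mathbf g_x'(x)^{-1}$ from non-expandability of $x_0$.
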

\begin{proof}
 After Lemma~\ref{l:energy-exp}, it is enough to prove that the series $\sum_{x\in X}|J_x^+|^2$ is convergent.
We can decompose this sum as

\begin{equation}
\label{sumsquares}
\sum_{k=0}^{\infty}\sum_{k(x)=k}|J_x^+|^2\le \sum_{k=0}^\infty \left ( \left (\max_{x\,:\,k(x)=k} |J_x^+|\right )\sum_{k(x)=k}|J_x^+|\right ).
\end{equation}
We first note that $|J_x^+|$ can be controlled by a term of the order of $\lambda^{-k(x)}$, because by construction we have $\mathbf g_x'(x)\geq\lambda^{k(x)}$.

Using Lemma \ref{distortionlem}, we get the following inequality holding for every $k\in\N$:
\[\sum_{k(x)=k}|J^+_x|\leq |\T|=1.\]
This suffices to prove that the upper bound in \eqref{sumsquares} is controlled by a converging geometric sum.
\end{proof}

\paragraph{The Schwarzian energy --} If $f\in\Diff^3_+(\T)$, we consider its \emph{Schwarzian derivative} given by the classical expression
\[
\mathcal S(f)=\left(\frac{f''}{f'}\right)'-\frac{1}{2}\left(\frac{f''}{f'}\right)^2.
\]
We have the following cocycle formula:
\begin{equation}\label{eq:S-comp}
\mathcal S(f\circ g)=(g')^2\cdot\mathcal S(f)\circ g + \mathcal S(g).
\end{equation}
Recall that the stabilizer of $x_0$ is generated by some $h\in G$, which moreover verifies $h'(x_0)=1$; we set $b=\mathcal S(h)(x_0)$. From this we can define a new function on the orbit $X$ of $x_0$:

\begin{pdfn}
The \emph{Schwarzian energy} is the function 
\begin{equation}\label{SchwarzianEnergy}
\dfcn{Q}{X}{\R/b\Z}{g(x_0)}{\mathcal S(g)(x_0)}
\end{equation}
(where the quotient $\R/b\Z$ can possibly be $\R$, if $b=0$).
\end{pdfn}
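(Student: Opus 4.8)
The plan is to establish well-definedness in exactly the same spirit as for the energy $\cE$ in \eqref{eq:energy} and for the nonlinearity function $N$ appearing in the proof of Duminy's theorem: the only source of ambiguity in the assignment $g(x_0)\mapsto\mathcal S(g)(x_0)$ is the stabilizer of $x_0$, so the whole question reduces to computing the Schwarzian derivative along the cyclic group $\stab{G}{x_0}=\langle h\rangle$ provided by Lemma~\ref{l:stab}.

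First I would fix two elements $g_1,g_2\in G$ with $g_1(x_0)=g_2(x_0)$. Then $g_2^{-1}g_1$ fixes $x_0$, hence $g_1=g_2 h^n$ for some $n\in\Z$. Applying the cocycle relation \eqref{eq:S-comp} with $f=g_2$ and $g=h^n$ yields
\[\mathcal S(g_2 h^n)(x_0)=\big((h^n)'(x_0)\big)^2\,\mathcal S(g_2)(h^n(x_0))+\mathcal S(h^n)(x_0).\]
Here two features of non-expandability are crucial: since $h(x_0)=x_0$ we have $h^n(x_0)=x_0$, and since $x_0\in\NE$ forces $h'(x_0)=1$ (the same argument that makes $\cE$ well-defined, applied to $h$ and $h^{-1}$), the chain rule gives $(h^n)'(x_0)=1$. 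Thus the prefactor $\big((h^n)'(x_0)\big)^2$ equals $1$ and the identity collapses to $\mathcal S(g_1)(x_0)=\mathcal S(g_2)(x_0)+\mathcal S(h^n)(x_0)$.

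The remaining point is to identify the correction term. Iterating \eqref{eq:S-comp} along the powers of $h$, and again using $h^k(x_0)=x_0$ and $(h^k)'(x_0)=1$ for all $k$, an easy induction shows $\mathcal S(h^{n})(x_0)=n\,\mathcal S(h)(x_0)=nb$; the case $n<0$ follows by the same computation applied to $h\circ h^{-1}=\mathrm{id}$, which gives $\mathcal S(h^{-1})(x_0)=-b$. Hence $\mathcal S(g_1)(x_0)\equiv\mathcal S(g_2)(x_0)\pmod{b}$, so $Q$ descends to a well-defined map $X\to\R/b\Z$ (taking values in $\R$ when $b=0$).

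There is no serious obstacle here; the one conceptual subtlety worth highlighting is precisely \emph{why} the quotient is taken modulo $b\Z$. It is the vanishing of the derivative factor, $(h^n)'(x_0)^2=1$ at the non-expandable point, that turns the a priori multiplicative cocycle \eqref{eq:S-comp} into a purely additive one along the stabilizer; this is what makes the indeterminacy of $\mathcal S(g)(x_0)$ equal to exactly the additive subgroup $b\Z$ generated by $b=\mathcal S(h)(x_0)$, and not something larger.
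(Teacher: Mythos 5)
Your argument is correct and follows exactly the paper's own proof: write $g_1=g_2h^k$ via Lemma~\ref{l:stab}, apply the cocycle relation \eqref{eq:S-comp} using $h(x_0)=x_0$ and $h'(x_0)=1$, and conclude that the ambiguity is the additive subgroup $b\Z$. The extra details you supply (the induction giving $\mathcal S(h^n)(x_0)=nb$ and the reason $h'(x_0)=1$ at a non-expandable point) are precisely the steps the paper leaves implicit, so there is nothing to add.
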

\begin{proof}
We follow the arguments previously given for the function $N$. We have to check that the function $Q$ is well-defined. Assume that $x=g_1(x_0)=g_2(x_0)$ for some $g_1,g_2\in G$. By Lemma \ref{l:stab}, we have $g_1=g_2 h^k$ for some $k\in\Z$. Using the cocycle relation \eqref{eq:S-comp} and the fact that $h'(x_0)=1$, we find
\[
\mathcal S(g_1)(x_0)=\mathcal S(g_2)(x_0)+k\, \mathcal S(h)(x_0).
\]
which is equal to $\mathcal S(g_2)(x_0)\pmod{b}$.
\end{proof}

An immediate corollary of~\eqref{eq:S-comp} is
\begin{equation}\label{eq:Q-increment}
Q(f(x))=\cE(x)^2\cdot \mathcal S(f)(x) + Q(x).
\end{equation}

\paragraph{Extension to the space of ends --} The following lemma provides a criterion to distinguish ends of the Schreier graph $\Sch(X,\mathcal G)$ of the orbit $X$ of $x_0$.

\begin{lem}
\label{distinguishends}
~

\begin{enumerate}[\bf i.]
\item Let $(x_n)_{n\in\N}$ be a sequence of points in $X$ which goes to an end in $\Sch(X,\mathcal G)$. Then $\lim_{n\to\infty}Q(x_n)$ exists.
\item If $(x_n)_{n\in\N},(y_n)_{n\in\N}$ go to the same end in $\Sch(X,\mathcal G)$, then
\[\lim_{n\to\infty}Q(x_n)=\lim_{n\to\infty}Q(y_n).\]
\end{enumerate}
\end{lem}

\begin{proof} The reasoning is analogous to the proof of Lemma~\ref{nonldistinguishends}, but we detail it for the sake of clarity.
Consider a sequence of the form $x_n= g_{n}\cdots g_1(x_0)$, where $(g_n)_{n\in\N}$ is a sequence of elements of the (symmetric) system of generators of $\mathcal{G}$.

Using \eqref{eq:Q-increment}, we get
\[Q(x_{n+1})-Q(x_n)=\cE(x_n)^2\cdot \mathcal S(g_{n+1})(x_n).\]
Using Lemma \ref{l:series} and an upper bound for the Schwarzian derivatives of the generators, we easily get that the sequence $(Q(x_n))_{n\in\N}$ is a Cauchy sequence, and hence converges.

\smallskip

We have the convergence of the sequence $(Q(x_n)-Q(y_n))_{n\in\N}$, and we have to prove that the limit is $0$ in the case where $x_n$ and $y_n$ converge to the same end. Let $\eps>0$ and $n_0$ be such that $\sum_{x\notin X({n_0})}\cE(x)^2<\eps$, where $X({n_0})$ denotes the set of those $x\in X$ at distance no greater than $n_0$ to $x_0$ for the word distance in $X$.

Assume that $x_n$ and $y_n$ go to the same end. When $n$ is large enough, there exists a path linking $x_n$ and $y_n$ in $\Sch(X,\mathcal G)$ which avoids $X({n_0})$. Using the same type of argument as above, we get that $|Q(x_n)-Q(y_n)|$ is smaller than $\eps$ times a uniform constant which only depends  on the system of generators. Since $\eps$ is arbitrary, this concludes the proof of the lemma.
\end{proof}

As a consequence, the function $Q$ defined in \eqref{SchwarzianEnergy} extends to the space of ends $e(X)$ of $\Sch(X,\mathcal G)$ (recall that the space of ends of a Schreier graph does not depend on the choice of the finite generating system). With abuse of notation, we also write $Q$ for this extension.

\subsection{Invariant projective structure}\label{ssc:invariant_proj}

Within this section, \emph{we will assume that the Schwarzian energy $Q$ takes finitely many values on the space of ends $e(X)$}. In particular, this holds if the Schreier graph of $x_0$ has finitely many ends, but we will show that this is never the case.
The goal is to produce a projective structure which is invariant for the action of $G$.

\begin{thm}\label{t:rigidity}
Let $G\subset \Diff^r_+(\T)$, $r\ge 3$, be a finitely generated subgroup of $C^r$ diffeomorphisms, such that the action of $G$ is minimal, has property $\pstar$ and a non-expandable point $x_0\in \T$. Suppose that the Schwarzian energy $Q$ defined on the Schreier graph $\Sch(X,\mathcal{G})$ of the orbit $X$ of the non-expandable point $x_0\in\NE$ takes finitely many values on the space of ends of $X$. Then $G$ is $C^r$ conjugate to a subgroup of some finite covering of $\PSL(2,\R)$.

In particular, the Schreier graph $\Sch(X,\mathcal G)$ has infinitely many ends and the group $G$ is virtually free.
\end{thm}

\paragraph{A projective chart --}
We begin by the construction of a single projective chart. We will next use the minimality of the action to construct a projective atlas.

The action of $G$ on $\T$ is at least $C^2$, minimal and does not preserve any probability  measure. Then Sacksteder's theorem (Theorem~\ref{t:sacksteder}) applies: the group $G$ acts on $\T$ with \emph{hyperbolic holonomy}.
More precisely, there exists a point $p\in\T$ and an element $f\in G$ with $f(p)=p$ and $\mu=f'(p)<1$. Sternberg's linearization theorem \cite[Section  3.6.1]{Navas2011} provides an interval $I$ about $p$, as well as a $C^r$-diffeomorphism $\fhi:(I,p)\to(\R,0)$, with $\fhi(p)=0$ and
\[\fhi\, f\,\fhi^{-1}=h_{\mu},\]
where $h_{\mu}$ denotes the homothety $x\mapsto\mu x$.

\begin{lem}[Projective holonomy]
\label{projectiveholonomy}
Assume that the Schwarzian energy $Q$ takes finitely many values on the space of ends of the Schreier graph of $x_0$. Then the chart $(I,\fhi)$ has projective holonomy. More precisely, for every $\gamma\in G$ such that $J=\gamma^{-1}(I)\cap I\neq\emptyset$, the following equality holds on $\fhi(J)$:
\[ \mathcal S(\fhi \gamma\fhi^{-1})=0.\]
\end{lem}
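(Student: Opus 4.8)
The plan is to work entirely inside the linearizing chart and to detect the vanishing of $\mathcal S(\fhi\gamma\fhi^{-1})$ by comparing the \emph{values at infinity} of the Schwarzian energy $Q$ along two forward orbits of the contraction $f$. Since the action is minimal, the orbit $X=G.x_0$ is dense in $J=\gamma^{-1}(I)\cap I$; as $\fhi\gamma\fhi^{-1}\in\Diff^r_+$ with $r\ge 3$ has continuous Schwarzian derivative, it is enough to prove $\mathcal S(\fhi\gamma\fhi^{-1})(\fhi(y))=0$ for every $y\in J\cap X$ and then invoke density and continuity. So I fix such a $y$; both $y$ and $\gamma(y)$ then lie in $I$, hence both forward orbits $\big(f^n(y)\big)_n$ and $\big(f^n(\gamma(y))\big)_n$ converge to $p$. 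I first record that they go to infinity in the Schreier graph: indeed $\cE(f^n(z))=(f^n)'(z)\,\cE(z)\to 0$ because $f$ contracts near $p$, and by Lemma~\ref{l:series} only finitely many points of $X$ carry energy above any fixed threshold, so $\cE\to 0$ forces the sequence to leave every ball $X(n_0)$. By the first part of Lemma~\ref{distinguishends}, the limits $Q_\infty(z):=\lim_n Q(f^n(z))$ then exist for $z=y$ and $z=\gamma(y)$.

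The key computation evaluates $Q_\infty$ explicitly. Iterating~\eqref{eq:Q-increment}, using $\cE(f^j(z))=(f^j)'(z)\,\cE(z)$ and the Schwarzian cocycle~\eqref{eq:S-comp}, the intermediate sum telescopes:
\[Q(f^n(z))-Q(z)=\cE(z)^2\sum_{j=0}^{n-1}(f^j)'(z)^2\,\mathcal S(f)(f^j(z))=\cE(z)^2\,\mathcal S(f^n)(z),\]
since $(f^j)'^{\,2}\,\mathcal S(f)\circ f^j=\mathcal S(f^{j+1})-\mathcal S(f^j)$ and $\mathcal S(\mathrm{id})=0$. Now $\fhi f^n\fhi^{-1}=h_{\mu^n}$ is a homothety, so writing $f^n=\fhi^{-1}h_{\mu^n}\fhi$ and expanding by~\eqref{eq:S-comp} gives $\mathcal S(f^n)(z)=\mu^{2n}\fhi'(z)^2\,\mathcal S(\fhi^{-1})(\mu^n\fhi(z))+\mathcal S(\fhi)(z)$. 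As $n\to\infty$ the first term vanishes (the factor $\mu^{2n}\to 0$, while $\mu^n\fhi(z)\to 0$ keeps $\mathcal S(\fhi^{-1})$ bounded), whence
\[Q_\infty(z)=Q(z)+\cE(z)^2\,\mathcal S(\fhi)(z).\]

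With this identity the conclusion becomes a short cocycle manipulation. Using $\cE(\gamma(y))=\gamma'(y)\,\cE(y)$, the relation $Q(\gamma(y))-Q(y)=\cE(y)^2\,\mathcal S(\gamma)(y)$ from~\eqref{eq:Q-increment}, and~\eqref{eq:S-comp} twice, one finds
\[Q_\infty(\gamma(y))-Q_\infty(y)=\cE(y)^2\big(\mathcal S(\fhi\gamma)(y)-\mathcal S(\fhi)(y)\big)=\cE(y)^2\,\fhi'(y)^2\,\mathcal S(\fhi\gamma\fhi^{-1})(\fhi(y)).\]
Thus $\mathcal S(\fhi\gamma\fhi^{-1})(\fhi(y))=0$ follows as soon as $Q_\infty(\gamma(y))=Q_\infty(y)$, and by the second part of Lemma~\ref{distinguishends} this holds provided the two forward orbits $\big(f^n(y)\big)_n$ and $\big(f^n(\gamma(y))\big)_n$ determine the \emph{same} end of the Schreier graph.

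Establishing this last point is where I expect the real difficulty, and it is exactly here that the hypothesis of finitely many ends enters (in the one-ended case it is automatic). The idea I would pursue is to join the two orbits by a far-out path: the word for $\gamma$ gives a path of bounded combinatorial length from $y$ to $\gamma(y)$, and pushing it forward by $f^n$ produces a chain joining $f^n(y)$ to $f^n(\gamma(y))$ whose finitely many vertices are $f^n$-images of fixed orbit points. The uniform contraction of $f$ toward $p$ should drive all these vertices into an arbitrarily small neighbourhood of $p$, hence outside any prescribed ball $X(n_0)$, so that the chain lives in $X\moins X(n_0)$; as there are only finitely many ends, the two orbits must eventually lie in a common component, i.e.\ determine the same end. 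The delicate points to control are that the intermediate vertices of the pushed path genuinely enter the basin of $p$ (one may need to replace the naive path or argue separately on the two sides of $p$, using minimality), and that consecutive vertices, related by conjugates $f^n s f^{-n}$ of generators rather than by generators themselves, can still be linked inside $X\moins X(n_0)$. Assembling these connectivity estimates, keeping every piece far from $x_0$ via minimality and the summability of the energy, is the technical heart of the lemma; once it is in place, the projective holonomy of the chart is immediate from the computation above.
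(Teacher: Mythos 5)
Your computations are exactly those of the paper: the evaluation $\lim_n Q(f^n(z))=Q(z)+\cE(z)^2\,\mathcal S(\fhi)(z)$ via the linearizing chart, and the identity $Q_\infty(\gamma(y))-Q_\infty(y)=\cE(y)^2\,\fhi'(y)^2\,\mathcal S(\fhi\gamma\fhi^{-1})(\fhi(y))$, are both correct and are the core of the argument. The gap is in the last logical step. You reduce the lemma to the claim that $\bigl(f^n(y)\bigr)_n$ and $\bigl(f^n(\gamma(y))\bigr)_n$ determine the \emph{same} end of the Schreier graph, and you acknowledge you cannot establish this. This is indeed a real obstruction, not a technicality: the two sequences are related by $f^n\gamma f^{-n}$, whose word length grows linearly in $n$, so they are a priori at unbounded Schreier-graph distance; the pushed-forward path you describe is not a path in the Schreier graph (consecutive vertices are joined by the long conjugates $f^n s f^{-n}$, not by generators), and its intermediate vertices need not lie in the basin of the contraction. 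With several ends there is no reason for the claim to hold, and the proof as written stops exactly where the finitely-many-ends hypothesis has to do its work.

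The paper's proof avoids the same-end question entirely. Since there are only finitely many ends and $Q_\infty$ depends only on the end (Lemma~\ref{distinguishends}), the set of possible limits is a finite set $\mathbf q$ modulo $b\Z$; hence for every $x=g(x_0)\in J\cap X$ the difference $\fhi'(x)^2\,\cE(x)^2\,\mathcal S(\fhi\gamma\fhi^{-1})(\fhi(x))=\mathcal S(\fhi\,\gamma g)(x_0)-\mathcal S(\fhi g)(x_0)$ lies in the \emph{discrete} set $\mathbf q-\mathbf q+b\Z$, whether or not the two ends coincide. By Lemma~\ref{l:series} one has $\cE(x)^2\to 0$ outside a finite subset of $X$, and $(\fhi')^2\,|\mathcal S(\fhi\gamma\fhi^{-1})\circ\fhi|$ is bounded on $J$ by compactness, so this discrete-valued quantity is eventually smaller than the gap $\delta$ of the discrete set around $0$ and must vanish on a dense subset of $J\cap X$; continuity finishes the proof. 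Note that this discreteness-plus-smallness step is needed even under your same-end claim: your identity for $Q_\infty(\gamma(y))-Q_\infty(y)$ only holds modulo $b\Z$, so equality of the limits in $\R/b\Z$ alone would give that the right-hand side lies in $b\Z$, and you would still need $\cE(y)^2$ small to rule out a nonzero multiple of $b$. I recommend replacing your final paragraph by this discreteness argument.
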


\begin{proof}
Assume that $Q$ takes finitely many values on the space of ends $e(X)$. By Lemma \ref{distinguishends}, for every $x\in I\cap X$, the limit $\lim_{n\to\infty} Q(f^n(x))$ exists and there is a finite set $\mathbf q=\{q_1,\ldots,q_\ell\}$ such that 
\[\lim_{n\to\infty} Q(f^n(x))\in \mathbf q+b\mathbb Z.\]
Now let $x=g(x_0)\in I\cap X$. Note that any homothety has zero Schwarzian derivative. Hence, the cocycle relation \eqref{eq:S-comp} implies the following equality:
\begin{align*}
Q(f^n(x))=\,&  \mathcal S(\fhi^{-1}\,  h_{\mu}^n\, \fhi  g)(x_0)\\
           =\,& \mu^{2n} (\fhi g)'(x_0)^2 \cdot\mathcal S(\fhi^{-1})(\mu^n\fhi  g(x_0))+\mathcal S(\fhi  g)(x_0).
\end{align*}
Letting $n$ go to infinity, we find $\lim_{n\to\infty} Q(f^n(x))=\mathcal S(\fhi g)(x_0)$. The latter shows that for every $g\in G$ satisfying $g(x_0)\in I$, we have that the Schwarzian derivative $\mathcal S(\fhi g)(x_0)$ belongs to the discrete set $\mathbf q+b\Z$.

Now consider a holonomy map of $I$, \textit{i.e.}~an element $\gamma\in G$ satisfying  $J=\gamma^{-1}(I)\cap I\neq\emptyset$. Note that by minimality, the set $J\cap X$ is dense in $J$. So let $x\in J\cap X$: we can write $x=g(x_0)$ for some $g\in G$. Since $x\in J$, we also have $\gamma g(x_0)=\gamma(x)\in I$. We deduce that both $\mathcal S(\fhi g)(x_0)$ and $\mathcal S(\fhi\, \gamma  g)(x_0)$ are in $\mathbf q+b\Z$. By \eqref{eq:Q-increment}, their difference is
\[\mathcal{S}(\fhi\,\gamma g)(x_0)-\mathcal S(\fhi g)(x_0)=\fhi'(x)^2\,\cE(x)^2\cdot \mathcal S (\fhi\, \gamma\fhi^{-1})(\fhi(x))\in \mathbf q-\mathbf q+b\Z.\]

The set $\mathbf q-\mathbf q+b\Z$ is discrete in $\R$ and contains $0$, so there is $\delta>0$ such that if
\[\left\vert\fhi'(x)^2\,\cE(x)^2\cdot \mathcal S (\fhi\, \gamma\fhi^{-1})(\fhi(x))\right\vert<\delta\]
then $\fhi'(x)^2\,\cE(x)^2\cdot \mathcal S (\fhi\, \gamma\fhi^{-1})(\fhi(x))=0$. Since $\fhi'(x)^2\,\cE(x)^2>0$, the latter condition implies $\mathcal S(\fhi\, \gamma\fhi^{-1})(\fhi(x))=0$.

\smallskip

By compactness, there is $M>0$ such that 
\[\sup_J\left\vert(\fhi')^2\cdot \mathcal S(\fhi\, \gamma\fhi^{-1})\circ\fhi\right\vert\le M.\]
Consider the set $X'$ of points $x\in X$ such that $\cE(x)^2<\frac{\delta}{M}$, which contains all but finitely many points of $X$.
The condition that points in $X'\cap J$ verify implies that $\mathcal{S}(\fhi\, \gamma\fhi^{-1})(\fhi(x))=0$ for every $x\in X'\cap J$. Since the orbit $X\cap J$ is dense in $J$, so is $X'\cap J$.
Hence, the Schwarzian derivative of $\fhi\, \gamma\,\fhi^{-1}$ vanishes on a dense set of $\fhi(J)$, which implies that $\fhi \,\gamma\,\fhi^{-1}$ is projective on $\fhi(J)$.
\end{proof}

\paragraph{Invariant projective structure --} By compactness of $\T$ and minimality of the action of $G$, there exists a finite number of open intervals $(I_j)_{j=1}^{m}$ and a finite number of elements of the group $(g_j)_{j=1}^m$ such that:
\begin{enumerate}
\item the family $(I_j)_{j=1}^m$ is an open cover of $\T$,
\item for every $j=1,\ldots,k$, we have $g_j(I_j)\dans I$.
\end{enumerate}

\begin{lem}[Invariant projective structure]
\label{projstructure}
For $j=1,\ldots,m$, we set $\fhi_j=\fhi\circ g_j:I_j\to\R$.
\begin{enumerate}[\bf i.]
\item The atlas $(I_j,\fhi_j)_{j=1}^{m}$ defines a projective structure  on $\T$, \textit{i.e.}~for every $j,k$ with $I_j\cap I_k\neq\emptyset$, we have:
\[\mathcal S(\fhi_k\fhi_j^{-1})=0.\]
\item The projective structure is $G$-invariant, \textit{i.e.}~for every $g\in G$ and $j,k$ satisfying $g^{-1}(I_k)\cap I_j\neq\emptyset$, we have:
\[\mathcal S(\fhi_k\, g\,\fhi_j^{-1})=0.\]
\end{enumerate}
\end{lem}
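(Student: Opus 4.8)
The plan is to reduce both assertions directly to the Projective holonomy Lemma~\ref{projectiveholonomy}, exploiting that all the charts $\fhi_j=\fhi\circ g_j$ are obtained from the single distinguished chart $(I,\fhi)$ by precomposition with a group element. The key point is that each transition map is, after conjugation by $\fhi$, exactly a holonomy map of $I$ in the sense of that lemma, so its Schwarzian must vanish. Throughout I will use only the identities $\fhi_j=\fhi\circ g_j$, $\fhi_j^{-1}=g_j^{-1}\circ\fhi^{-1}$ and the standing assumptions that the $I_j$ cover $\T$ with $g_j(I_j)\subset I$.

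First I would treat assertion i. On $\fhi_j(I_j\cap I_k)$ one has, as an identity of maps,
\[\fhi_k\circ\fhi_j^{-1}=\fhi\circ g_k\circ g_j^{-1}\circ\fhi^{-1}=\fhi\,\gamma\,\fhi^{-1},\qquad \gamma:=g_k g_j^{-1}.\]
To invoke Lemma~\ref{projectiveholonomy} I must check that $J=\gamma^{-1}(I)\cap I\neq\emptyset$ and that the chart overlap sits inside $\fhi(J)$. This is the only place requiring care, and it is short: for $y\in g_j(I_j\cap I_k)$ one has $y\in g_j(I_j)\subset I$, while $\gamma(y)=g_k\big(g_j^{-1}(y)\big)\in g_k(I_k)\subset I$ since $g_j^{-1}(y)\in I_j\cap I_k\subset I_k$; hence $g_j(I_j\cap I_k)\subset J$, so $J\neq\emptyset$ whenever $I_j\cap I_k\neq\emptyset$, and $\fhi_j(I_j\cap I_k)=\fhi\big(g_j(I_j\cap I_k)\big)\subset\fhi(J)$. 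The lemma then yields $\mathcal S(\fhi\gamma\fhi^{-1})=0$ on $\fhi(J)$, hence $\mathcal S(\fhi_k\fhi_j^{-1})=0$ on the overlap: the transition maps are projective and the atlas defines a projective structure on $\T$.

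Assertion ii.\ is entirely parallel. On $\fhi_j\big(g^{-1}(I_k)\cap I_j\big)$ one has
\[\fhi_k\circ g\circ\fhi_j^{-1}=\fhi\,\gamma\,\fhi^{-1},\qquad \gamma:=g_k\,g\,g_j^{-1},\]
and the same bookkeeping gives $g_j\big(g^{-1}(I_k)\cap I_j\big)\subset \gamma^{-1}(I)\cap I=J$: for such $y$ one has $y\in g_j(I_j)\subset I$ and, because $g_j^{-1}(y)\in g^{-1}(I_k)$, also $\gamma(y)=g_k\big(g(g_j^{-1}(y))\big)\in g_k(I_k)\subset I$. Applying Lemma~\ref{projectiveholonomy} to this $\gamma$ gives $\mathcal S(\fhi_k\,g\,\fhi_j^{-1})=0$, i.e.\ the structure is $G$-invariant.

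The main obstacle is not analytic but purely set-theoretic: one must guarantee that each $\gamma$ fed into Lemma~\ref{projectiveholonomy} genuinely has $\gamma^{-1}(I)\cap I\neq\emptyset$ and that the relevant overlap lies in $\fhi(J)$. The hypotheses that the cover is finite and that $g_j(I_j)\subset I$ for every $j$ are exactly what make these inclusions automatic, so no new difficulty arises. Once this invariant projective structure is constructed, the proof of Theorem~\ref{t:duminy} is completed by the separate observation (the projective analogue of the final step of Duminy's theorem above) that a group acting minimally and preserving a projective structure on $\T$ cannot possess a non expandable point; this contradicts $x_0\in\NE$ and thereby forces the Schreier graph of $x_0$ to have infinitely many ends.
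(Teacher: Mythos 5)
Your proof is correct and follows exactly the paper's route: the paper likewise observes that $g_k\,g\,g_j^{-1}$ (and, for the cocycle condition, $g_k g_j^{-1}$) is a holonomy map of $I$ and concludes by Lemma~\ref{projectiveholonomy}. The only difference is that you spell out the set-theoretic verification that $\gamma^{-1}(I)\cap I\neq\emptyset$ and that the overlap lands in $\fhi(J)$, which the paper leaves implicit.
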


\begin{proof}
For every $g\in G$, when $g^{-1}(I_k)\cap I_j\neq\emptyset$, the map $g_k  g  g_j^{-1}$ is a holonomy map of $I$.

Hence, this lemma is a direct application of the fact that $(I,\fhi)$ has projective holonomy (see Lemma \ref{projectiveholonomy}).
\end{proof}

\paragraph{Projective structures on the circle --} On the circle, there is a canonical projective structure which is given by that of $\R\PP^1$, and whose group of automorphisms is $\mathrm{PSL}(2,\R)$.

For a general projective structure we have the following result originally due to Kuiper \cite{kuiper}, but whose proof contained a little mistake corrected by Goldman \cite{goldman1,goldman2} (\textit{cf.}~\cite{navas2006}; it also appears in \cite[Lemme 5.1]{Ghys1993}):
\begin{thm}[Kuiper--Goldman]\label{KuiperGoldman}
If the group of orientation preserving automorphisms of a $C^r$ projective structure is not abelian, then it is $C^r$ conjugate to some finite covering of $\PSL(2,\R)$.
\end{thm} 

Let us explain the main lines of the proof. The whole point is to show that the \emph{developing map} of the projective structure gives the $C^r$ diffeomorphism realizing the conjugacy.
 In what follows, we denote by $\Gamma$ the group of orientation preserving automorphisms of a projective structure on $\T$.
We also denote by $\widetilde{\mathbf{S}}^1$ and $\widetilde{\R\PP}^1$  the  universal covers of $\T$ and $\R\PP^1$ respectively. The central extension 
\begin{equation}\label{eq:central-extension}
0\to\Z\to^{\iota}\widetilde{\Gamma}\to\Gamma\to 1.
\end{equation}
defines the lift $\widetilde{\Gamma}$ of $\Gamma$ to the universal cover $\widetilde{\mathbf{S}}^1$. The injective homomorphism $\iota:\Z\to \widetilde\Gamma$ is such that the quotient $\widetilde {\mathbf S}^1/\iota(\Z)$ is diffeomorphic to $\T$.
 Similarly, we have that the universal cover $\widetilde{\PSL}(2,\R)$ of $\PSL(2,\R)$, defined by the central extension
\[
0\to\Z\to\widetilde{\PSL}(2,\R)\to\PSL(2,\R)\to 1,
\]
acts on $\widetilde{\R\PP}^1$. 

We defined a $C^r$ projective structure on $\T$ as an atlas $(I_j,\fhi_j)_{j=1}^m$ of projective charts. An equivalent way of defining it is by the data of a \emph{developing-holonomy pair} $(\mathsf{dev},\mathsf{hol})$. Here $\mathsf{hol}$ is an injective homomorphism $\mathsf{hol}:\widetilde{\Gamma}\to\widetilde{\PSL}(2,\R)$, called the \emph{holonomy representation}, and $\cD:\widetilde{\mathbf{S}}^1\to\widetilde{\R\PP}^1$ is a local diffeomorphism of class $C^r$, called the \emph{developing map}, which is $\widetilde\Gamma$-equivariant:  $\cD\circ\gamma=\mathsf{hol}(\gamma)\circ\cD$ for every $\gamma\in\widetilde{\Gamma}$.
The developing map, which is well-defined up to a post-composition by an element of $\widetilde{\PSL}(2,\R)$, globalizes the projective charts, and the holonomy representation globalizes the transition maps.

Observe that since $\iota(\Z)$ is central in $\widetilde\Gamma$, the centralizer of $\mathsf{hol}\circ \iota(\Z)$ in $\widetilde{\PSL}(2,\R)$ contains the whole image $\mathsf{hol}(\widetilde\Gamma)$. Moreover we have the following elementary fact:

\begin{lem}
The centralizer of a non-central element of $\widetilde{\PSL}(2,\R)$ is abelian. 
\end{lem}

One deduces that if $\widetilde{\Gamma}$ is not abelian, the element $\mathsf{hol}\circ\iota(1)$ is central in $\widetilde{\PSL}(2,\R)$ and so it must be an automorphism of the universal covering $\widetilde{\R\PP}^1\to\R\PP^1$. Finally one has $\cD(\widetilde{\mathbf{S}}^1)=\widetilde{\R\PP}^1$, and $\cD$ descends to a diffeomorphism between $\T$ and some $k$-fold covering of $\R\PP^1$ that conjugates $\Gamma$ to $\PSL^{(k)}(2,\R)$. In order to see that the conjugacy is $C^r$, notice that it is given by the developing map, which is $C^r$ because the projective charts are of class $C^r$.

\paragraph{Proof of Theorem~\ref{t:rigidity} --} 
The projective structure we constructed in Lemma \ref{projstructure} cannot have an abelian group of automorphism, since $G$ realizes as a subgroup and is not abelian.
Hence, the group of automorphism of our invariant projective structure has to be conjugate to some finite covering $\mathrm{PSL}^{(k)}(2,\R)$ of $\PSL(2,\R)$. 
We conclude that $G$ is $C^r$ conjugate to a subgroup of $\mathrm{PSL}^{(k)}(2,\R)$, and this subgroup is discrete in $\mathrm{PSL}^{(k)}(2,\R)$ for $G$ is locally discrete.
By definition of $\mathrm{PSL}^{(k)}(2,\R)$, there exists a $C^r$ diffeomorphism $\gamma$ of $\T$ of order $k$ that commutes with $G$. We denote by $\overline{G}$ the image of $G$ obtained by considering the action of $G$ on $\T/\langle \gamma \rangle$. Observe that $\overline G$ is $C^r$ conjugate to a discrete subgroup of $\PSL(2,\R)$.

By  assumption, there are non-expandable points, which means that there are elements in $G$ with parabolic fixed points. Hence there are elements in $\overline G$ that are $C^r$ conjugate to parabolic elements in $\PSL(2,\R)$. Hence $\overline G$ is $C^r$ conjugate to the fundamental group  of a hyperbolic surface with non-empty boundary and so it is virtually free  and with infinitely many ends. As $G$ is a finite central extension of $\overline G$, the same holds for $G$.

\subsection{Proof of Theorem~\ref{t:duminy}}\label{ssc:proofC}
Here we summarize all the work done so far in this section and prove Theorem~\ref{t:duminy}.
Consider a finitely generated subgroup $G\subset \Diff^3_+(\T)$, which acts minimally, possesses property $(\star)$, and has at least one non-expandable point $x_0$.
Consider the Schwarzian energy $Q$ defined on the Schreier graph $\Sch(X,\mathcal G)$ of the orbit of the non-expandable point $x_0$, as in \eqref{SchwarzianEnergy}. Recall that Lemma~\ref{distinguishends} ensures that the function $Q$ has a well-defined extension on the space of ends $e(X)$ of the Schreier graph of $X$.

If $Q$ takes only finitely many values on $e(X)$, we deduce from Theorem~\ref{t:rigidity} that $\Sch(X,\mathcal G)$ has infinitely many ends and the group $G$ is virtually free.
Otherwise, $Q$ takes infinitely many values and this implies that $\Sch(X,\mathcal G)$ has infinitely many ends.

%%%%%%%%%%%%%%%%%%%%%%%%%%%%%%%%%%%%

\section{Theorem~\ref{t:ends_germs}: Ends of the groupoid of germs}
\label{s:thmd}

The main purpose of this section is to prove Theorem~\ref{t:ends_germs} about the number of ends of $G$.
Observe that it is enough to prove that the groupoid of germs $G_{x_0}$, for a non-expandable point $x_0\in \T$, has infinitely many ends: this is because in real-analytic regularity any element is uniquely determined by its germ at a given point.
Notice also that after Theorem~\ref{t:duminy} we know that the Schreier graph $\Sch(X,\cG)$ of the orbit of $x_0$ has infinitely many ends.

The proof of Theorem~\ref{t:ends_germs} relies on the following analogue to Lemma~\ref{l:nonl_compact_support} for groups with $\pstar$ (even though we have to ``discard'' some ends of $\Sch(X,\mathcal G)$). We will then deduce that if the Schreier graph of the orbit of $x_0$ has infinitely many ends, then also the groupoid of one-sided germs $G_{x_0}$ does.

Recall that points of the orbit $X$ are the vertices of the Schreier graph $\Sch(X,\cG)$.
 
\begin{prop}\label{p:nonc1}
Assume we are under the hypotheses of Theorem~\ref{t:ends_germs}. There exists a finite set $Y\subset X$ such that for at least an unbounded connected component $\cC$ of the complement $\Sch(X,\cG)\setminus Y$ the following holds. 

Consider a point $x\in \cC$ in the orbit of $x_0\in \NE$. Let $g\in G$ be an element that fixes $x$ and suppose that $g$ can be written in the form $g=g_n\cdots g_1$ with respect to the generating system $\mathcal G$. Suppose that the intermediate images of $x$ satisfy
$g_i\cdots g_1(x)\in \cC$, for any $i=1,\ldots, n$.
Then $g$ is the identity when restricted to a neighbourhood of $x$.
\end{prop}

\begin{rem}
It is possible that within the standing assumption of real-analytic regularity, the proof can be largely simplified. Here we want to provide a strategy that relies on this assumption as least as possible, hoping that Theorem~\ref{t:ends_germs} can be generalized to $C^2$ regularity (our Conjecture~\ref{conj:groupoid}). The essential property we use of $C^\omega$ regularity is given by Lemma~\ref{l:keyCw}, namely the magnification map $\mathcal R$ is always expanding (this is a consequence of the normal forms \eqref{eq:ParNormalForm} and \eqref{eq:ParNormalForm2}, which are not valid in $C^\infty$ regularity).
\end{rem}
We postpone the proof of Proposition~\ref{p:nonc1} to the end of this section, since we first need a few preliminary lemmas. Before that, let us explain how Proposition~\ref{p:nonc1} implies Theorem~\ref{t:ends_germs}.

\paragraph{Proof of Theorem~\ref{t:ends_germs} from Proposition~\ref{p:nonc1} --}

We proceed as for Theorem~\ref{t:duminy_analytic}. Consider the class of the loop defined by the stabilizer $h\in \stab{G}{x_0}$ in the fundamental group $\pi_1(\Sch(X,\mathcal{G}),x_0)$. The holonomy covering $\pi:\widehat{\Sch}(X,\cG)\to \Sch(X,\mathcal{G})$ (see Remark~\ref{r:holcover}) is exactly the covering associated with this element. 
After Proposition~\ref{p:nonc1}, there exists a finite subset $Y$ and an unbounded connected component of $\Sch(X,\cG)\setminus Y$ such that any loop contained in $\cC$ can be lifted to $\widehat{\Sch}(X,\cG)$. This implies that the pre-image of $\cC$ of the holonomy covering is homeomorphic to $\cC\times \Z$. Denote by $\phi$ the homeomorphism $\phi:\cC\times \Z\to \pi^{-1}(\cC)$.
For any $n>0$, let $Y_n$ be a finite subset contained in $\pi^{-1}(Y)$ such that $\widehat{\Sch}(X,\cG)\setminus Y_n$ contains $\phi(\cC\times [1,n])$. We deduce that $\widehat{\Sch}(X,\cG)\setminus Y_n$ contains at least $n$ unbounded connected components. Letting $n$ go to infinity, we deduce that $\widehat{\Sch}(X,\cG)$ has infinitely many ends, as desired.

\smallskip

The rest of the section is devoted to proving Proposition \ref{p:nonc1}.

\subsection{A particular case}
We begin by proving Proposition~\ref{p:nonc1} under the stronger assumption that the sum of the energies is sufficiently small: this condition is analogue to the condition of small sum of lengths in Lemma~\ref{l:nonl_compact_support}, but we replace the length by the energy $\cE$ defined as in \eqref{eq:energy}.
\begin{lem}\label{l:nonc1}
There exists $\delta>0$ with the following property:  let $g\in G$ be an element that fixes some $x\in X$ and suppose that $g$ can be written in the form $g=g_n\cdots g_1$ with respect to the generating system $\mathcal G$. Suppose that the intermediate images of $x$ satisfy
\begin{equation}\label{eq:linearsum}
\sum_{i=0}^{n-1}\cE(g_i\cdots g_1(x))<\delta.
\end{equation}
Then $g$ is the identity when restricted to a neighbourhood of $x$.
\end{lem}

\begin{proof}
Note that if~\eqref{eq:linearsum} is satisfied for some $g\in G$ and $x\in X$
then for the sum of the intermediate derivatives we have
\[
\sum_{i=0}^{n-1}(g_i\cdots g_1)'(x)=\frac{\sum_{i=0}^{n-1}\cE(g_i\cdots g_1(x))}{\cE(x)}<\frac{\delta}{\cE(x)}.
\]

We consider the restriction of $g$ to the right neighbourhood $J_x^+$ in the partition of level $\kappa(x)$ given by Proposition \ref{Expprocedure}.\ref{ass:level} (we can proceed in a similar way with the left neighbourhood $J_x^-$). 

Recall from Lemma~\ref{l:energy-exp} that the length $|J_x^+|$ is of the same order of magnitude as $\cE(x)$: there exists $C>1$, which does not depend on $x$, such that
\begin{equation}
\label{eq:energy-interval}
C^{-1}|J_x^+|\le \cE(x)\le C|J_x^+|.
\end{equation}
We can apply Proposition~\ref{l:schwartz0} to have a good control of distortion for $g$ on the interval $J_x^+$: for any $\delta<\log 2/4C_{\mathcal G}C$, Proposition~\ref{l:schwartz0} gives 
\[
\varkappa(g;J_x^+)\le 4C_{\mathcal{G}}C\delta.
\]

Now, $g'(x)=1$ because $x$ is in the orbit of a non-expandable point. Hence $g'$ is close to $1$ uniformly on $J_x^{+}$. In particular, there exists a constant $K>0$, which does not depend on $x$, such that for every $z\in J_x^{+}$ we have
\[
|g(z)-z|\le K |J_x^+|\delta.
\]
Similarly to Proposition~\ref{Expprocedure}.\ref{ass:dist}, for a sufficiently small $\delta$ we have an arbitrarily good control of distortion for the map $\mathbf g_x$ given by Proposition~\ref{Expprocedure}.\ref{ass:level}, on the interval $[z,g(z)]$; hence the ratio
\[
\frac{\mathbf g_x'(g(z))}{\mathbf g_x'(z)}
\]
is uniformly close to $1$. We conclude that the element $\widetilde g=\mathbf g_x\,g\,\mathbf g_x^{-1}$ has derivative close to $1$ on $\mathbf{g}_x(J_x^+)$. By the definition of the expanding map $\mathbf g_x$, the interval $\mathbf{g}_x(J_x^+)$ is in the finite collection $\mathcal{I}$ of Theorem~\ref{t:Markov}. The element $\widetilde g$ fixes the leftmost point of this interval.

For any interval $I^+_i$ in the collection $\mathcal I$, we know from Lemma~\ref{l:stab} that the stabilizer of its leftmost point is cyclic, generated by some element $h_i$. Choosing the constant $\delta$ such that $\widetilde{g}$ is closer to the identity on the macroscopic interval $\mathbf{g}_x(J_x^+)$ than all the $h_i^{\pm 1}$ on their corresponding $I_i^+$, we can conclude that $\widetilde g$ must be the identity on a right neighbourhood of $\mathbf g_x(x)$, and so is $g$ on a right neighbourhood of $x$, as desired.
\end{proof}

If we had that $\sum_{x\in X}\cE(x)<\infty$ we could easily use Lemma \ref{l:nonc1} in order to prove Proposition \ref{p:nonc1}, the problem is that we only know that $\sum_{x\in X}\cE(x)^2<\infty$. The idea is to lift the study of stabilizers to a ``macrocospic'' level where we control the sum of the energies so Lemma \ref{l:nonc1} applies. The lift is made by the magnification procedure described in \S~\ref{sc:Markovp}.

\subsection{Crowns}

In the following, we will take the freedom to enlarge the finite generating system when needed. This changes the graph structure of the Schreier graph, by adding edges, but by Remark~\ref{r:indep_generators}, this does not affect the assumption that $\Sch(X,\cG)$ has infinitely many ends. However increasing the number of generators leads to worse bounds on dynamical quantities, and we keep track of this by setting, given a finite generating set $\cG$, 
\begin{equation}\label{eq:chooseconstants}
M=M_{\cG}:=\max_{g\in\cG\cup\cG^{-1}}\|g'\|_0.
\end{equation}
Also, given a finite generating set $\cG$ and $\eps>0$, we define the two following nested subsets of $X$, to which we refer as \emph{crowns}:
\begin{equation}\label{eq:Xeps}
X'=X'_{\eps,\cG}:=\left\{ x\in X \mid M^{-3}\eps\le \cE(x)\le \eps\right\},
\end{equation}
and 
\begin{equation}\label{eq:Xsegeps}
X''=X''_{\eps,\cG}:=\left\{ x\in X \mid M^{-2}\eps\le \cE(x)\le M^{-1} \eps\right\}.
\end{equation}
The dependence on $\cG$ and/or $\eps$ will be omitted when there is no ambiguity.  Note that as a consequence of Lemma~\ref{l:series}, these are finite sets.
 
\begin{lem}
	\label{l:ccXeps}
Fix a finite generating set $\cG$ and assume that $\Sch(X,\cG)$ has infinitely many ends.	For every $K\in\N$ there exists $\eps_0>0$ such that for any $\eps\le \eps_0$, the complement $\Sch(X,\cG)\moins X'_\eps$ has at least $K$ unbounded connected components.

The same holds also for the minor crown $X''_\eps$.
\end{lem}

\begin{proof}
Recall that by Lemma \ref{l:series} the sum $\sum_{x\in X}\cE(x)^2$ is finite. Take the constant $M>0$ as in \eqref{eq:chooseconstants}, and for any $\eps>0$ define
\[Y_\eps:=\left\{ x\in X \mid M^{-3}\eps\le \cE(x)\right\}.\]
The family $(Y_\eps)_{\eps>0}$ is an increasing family (as $\eps$ decreases to $0$) of compact subsets exhausting $X$. Thus, as $\Sch(X,\cG)$ has infinitely many ends, for any $K\in\N$ there exists $\eps_0>0$ such that for any $\eps\le \eps_0$ the complement $\Sch(X,\cG)\setminus Y_\eps$ has at least $K$ unbounded connected components. It is then enough to prove the following.

\begin{claim*}
Every connected component of $\Sch(X,\cG)\moins Y_\eps$ is a connected component of $\Sch(X,\cG)\moins X'_\eps$.	
\end{claim*}

\begin{proof}[Proof of claim]
The subset $Z_\eps:=\left\{ x\in X \mid  \cE(x)>\eps \right\}$ is also finite and $Y_\eps$ is the disjoint union $X'_\eps\sqcup Z_\eps$.
The claim is a consequence of two following assertions.
\begin{enumerate}[\bf i.]
\item\label{i:noedge1} Every continuous path in $\Sch(X,\cG)$ whose extremities belong respectively to $Z_\eps$ and to $X\moins Y_\eps$ intersects $X'_\eps$.
\item\label{i:noedge2} Every continuous path in $\Sch(X,\cG)$ whose extremities belong to different connected components of $\Sch(X,\cG)\moins Y_\eps$ intersects $X'_\eps$.
\end{enumerate}
(Here by a \emph{path}, we always mean a path whose endpoints are vertices of $\Sch(X,\cG)$.)

Assertion \ref{i:noedge1} follows from the fact that there is no edge in $\Sch(X,\cG)$ between a point of $Z_\eps$ and a point of $X\moins Y_\eps$. The existence of such  an edge is equivalent to the existence of $y\in X$ and $g\in\cG\cup\cG^{-1}$ such that $\cE(g(y))>\eps$ and $\cE(y)<M^{-3}\eps$. This is impossible because $\cE(g(y))=g'(y)\cE(y)$ and because from the definition~\eqref{eq:chooseconstants} we must have $M\geq||g'||_0$.

Assertion \ref{i:noedge2} follows from \ref{i:noedge1}, and the fact that $Y_\eps=X'_\eps\sqcup Z_\eps$. Indeed a continuous path between two connected components of $\Sch(X,\cG)\moins Y_\eps$ must intersect $Y_\eps$. If it intersects $Z_\eps$ then it must intersect $X'_\eps$ by \ref{i:noedge1}. If not, it must intersect $X'_\eps$ anyway.
\end{proof}

This gives the result for the major crown $X'_\eps$; the proof for the minor crown $X''_\eps$ is analogous.
\end{proof}

Crowns have been introduced for two reasons. First, as we say above, they disconnect the Schreier graph into a large number of unbounded connected components, and as $\eps>0$, they exhaust $\Sch(X,\cG)$. They also enjoy a very nice dynamical property, that is we have a bound on the sum of energies.

\begin{lem}
\label{l:macroucrouille}
Let $\cG$ be a finite system of generator and $M$ the associated constant defined by \eqref{eq:chooseconstants}. Assume that $\Sch(X,\cG)$ has infinitely many ends. There exists a constant $C_0$ such that for every $\eps>0$ we have
\[\sum_{x\in X'_\eps}\cE(x)\leq C_0,\]
where $X'_\eps$ is defined as in \eqref{eq:Xeps}.
\end{lem}

\begin{proof}
%We now proceed to the proof of \ref{i:nonc2}: let us estimate $\sum_{x\in X'_\eps}\cE(x)$. We will show that it does not exceed some universal constant $C_0$ that does not depend on $\eps$. 
We consider the intervals $J_x^+$, $x\in X$, given by Proposition~\ref{Expprocedure}. 
Any two distinct intervals $J_x^{+}$, $J_y^{+}$ either  are disjoint, or one is contained into the other. As in the proof of Lemma~\ref{l:series}, we observe that in the latter case, the ratio of the lengths is larger than  $\lambda>1$ (due to control of distortion).

We claim that there exists a uniform $C_1$ such that for any $\eps>0$, any point of the circle is covered by at most $C_1$ intervals $J_x^+$, with $x\in X'_\eps$.

Indeed, let $z\in\T$ be any point and denote by $J_{x_1}^+\subset \ldots \subset J_{x_d}^+$ all the intervals containing $z$, given by points $x_i\in X'_\eps$, ordered by inclusion. On the one hand, we must have
\[
\frac{|J_{x_d}^+|}{|J_{x_1}^+|}\ge \lambda^d.
\]
On the other hand by Lemma~\ref{l:energy-exp} there exists a constant $c>1$ such that for any $x\in X$ one has
\[
c^{-1}\cdot\cE(x)\le |J_x^+|\le c\cdot\cE(x),
\]
whence
\[
\frac{|J_{x_d}^+|}{|J_{x_1}^+|}\le \frac{c\cdot \cE(x_d)}{c^{-1}\cdot\cE(x_1)}\le c^2M^3,
\]
for $x_1,x_d\in X'_\eps$.
Thus we have a uniform bound for the number of overlaps $d$ given by $\lambda^d\le c^2\,M^3$. Therefore it is enough to take $C_1$ such that
$
\lambda^{C_1}>c^2\,M^3.
$
We deduce the inequality
$
\sum_{x\in X'_\eps}|J_x^{+}|\le C_1,
$
Thus, by Lemma~\ref{l:energy-exp}, there exists $C_0$ such that
$
\sum_{x\in X'_\eps}\cE(x)\le C_0,
$
as desired.
\end{proof}

%%%%%%%%%%%%%
%%%%%%%%%%%%%
%%%%%%%%%%%%%
%%%%%%%%%%%%%

\subsection{Magnification}

\paragraph{Increased systems of generators --} We consider the Markov partition $\mathcal{I}$ consisting of atoms $I$ and associated maps $g_I$ given by Theorem \ref{t:Markov2}. We choose the Markov partition so that the \emph{magnification map} $\cR:\T\moins\Delta_0\to\T$, introduced in \eqref{eq:expR},
 is expanding: $\cR'(x)>1$ for every $x\in\T\moins\Delta_0$ (see Lemma \ref{l:keyCw}).

 \begin{conv}\label{convention}
 	Recall that $\cR$ is defined to be equal to a certain element $g_I\in G$ on every atom  $I$. To avoid imprecisions in what follows, we consider the \emph{right-continuous extension} of $\cR$ to $\T$, and keep denoting it by $\cR$. As a consequence, the conclusions of Theorem~\ref{t:thompsonlike} hold for a partition of $\T$ (we do not need to remove a finite subset $\Sigma_g\subset\T$).
 \end{conv}
 
After Theorem~\ref{t:thompsonlike}, given the Markov partition, there is a finite collection of elements $h_i\in G$ of $G$ that describe all elements upon conjugation which allows to describe elements of $G$ through sufficient magnification.
With this, we enlarge the finite generating set $\cG$:
\begin{equation}\label{eq:newgen}
\mathcal{G}_1=\mathcal G\cup \{g_I\}\cup\{h_i\}.
\end{equation}
This is also a finite generating set so by Remark~\ref{r:indep_generators} $\Sch(X,\cG_1)$ has infinitely many ends.

Apply again Theorem \ref{t:thompsonlike} to elements of $\cG_1$. For an element $g\in\cG_1\cup\cG_1^{-1}$ there exists a partition $J_1,...,J_q$ of $\T$ and integers $n_p,n_p'$ such that
\[\cR^{n_p'}g\vert_{J_p}=h_{i_p}\cR^{n_p}\vert_{J_p}.\]

The elements $\gamma\in G$ such that there exist an interval $U\subset \T$ and a power $i\le \max\{n_p,n'_p;p=1,\ldots,q\}$ such that $\gamma\vert_U=\cR^i\vert_U$, constitute a finite collection $\mathcal F$ of compositions of maps $g_I$. We define the enlarged system of generators of $G$ by
\begin{equation}\label{eq:ext_gen_set}\widetilde{\cG}=\cG_1\cup \mathcal F\cup \mathcal F^{-1},\end{equation}
which is also a finite generating set.
In what follows we will always consider the constant $M=M_{\widetilde{\mathcal G}}$ introduced in \eqref{eq:chooseconstants} with respect to this generating system $\widetilde{\cG}$. We keep working with these distinct sets of generators, as they play different roles in the course of the proof.

\begin{rem}
\label{r:increased_system}
The graph $\Sch(X,\widetilde{\cG})$ is obtained from $\Sch(X,\cG_1)$ by adding the shortcuts represented by elements of $\mathcal F$. Thus $\Sch(X,\cG_1)$ is naturally a subgraph $\Sch(X,\widetilde{\cG})$.
The set of vertices does not change, but distances decrease and different connected components of some subset $\Sch(X,\cG_1)\setminus Y$ may be merged into a unique connected component of $\Sch(X,\widetilde{\cG})\setminus Y$. In particular, every unbounded connected component of $\Sch(X,\widetilde{\cG})\moins X'$ contains an unbounded connected component of $\Sch(X,\cG_1)\moins X'$.
\end{rem}

\paragraph{Lifting paths to the crown --} We now prove how to use the magnification procedure in order to lift paths of $\Sch(X,\widetilde{\cG})$ to the macroscopic level.

\begin{lem}
\label{l:nonc2}
Let $\eps>0$. Given a point $y$ belonging to an unbounded connected component of $\Sch(X,\widetilde{\cG})\moins X_{\eps}''$ and $g\in\cG_1$,  there exist $n,m\in\N$ and $h\in\cG_1$ such that
\[
\cR^n(y), \cR^{m}(g(y))\in X_\eps',\text{ and }h(\cR^n(y))=\cR^{m}(g(y)).
\]
\end{lem}

\begin{proof}
Let $y$ be a point in an unbounded connected component of $\Sch(X,\widetilde{\cG})\setminus X''_\eps$. In particular we have
$\cE(y)<M^{-2}\eps$.
Let $g\in\cG_1$. Using the magnification procedure, there exist $n_1,m_1,i_1$ such that $\cR^{m_1}g(y)=h_{i_1}\cR^{n_1}(y)$. Note that by definition $\cR^{n_1}$ (resp. $\cR^{m_1}$) coincides in a neighbourhood of $y$ (resp. $g(y)$) with an element of the augmented set $\widetilde{\cG}$, and that $h_{i_1}\in\cG_1$.

As a consequence, performing the magnification of the element $h_{i_1}$ on an interval containing $\cR^{n_1}(y)$ provides $n_2,m_2,i_2$ such that
 \begin{align*}
 \cR^{m_1+m_2}g(y) & = \cR^{m_2}\cR^{m_1}g(y) \\
 & = \cR^{m_2} h_{i_1}\cR^{n_1}(y) \\
 & = h_{i_2} \cR^{n_2}\cR^{n_1}(y) =h_{i_2}\cR^{n_2+n_1}(y).
 \end{align*}
 Here again $\cR^{n_2}$ (resp. $\cR^{m_2}$) coincides in a neighbourhood of $\cR^{n_1}(y)$ (resp. $\cR^{m_1}(g(y))$) with an element of the augmented set $\widetilde{\cG}$, and we also have that $h_{i_1}\in\cG_1$.

Iterating this process, we obtain a tower of magnifications that we schematically represent by the following diagram:
\[
\xymatrix{
\cR^n(y) \ar[r]^{h} & \cR^m(g(y))\\
\cdots \ar[u] & \cdots \ar[u]\\
\cR^{n_1+n_2}(y) \ar[r]^{h_{i_2}} \ar[u] & \cR^{m_1+m_2}(g(y)) \ar[u] \\ 
\cR^{n_1}(y)\ar[r]^{h_{i_1}} \ar[u]^{\cR^{n_2}} & \cR^{m_1}(g(y)) \ar[u]_{\cR^{m_2}}\\
y \ar[u]^{\cR^{n_1}} \ar[r]_g & g(y) \ar[u]_{\cR^{m_1}} 
}
\]
where the ascending arrows $\cR^{n_k},\cR^{m_k}$ are restrictions of elements of $\widetilde{\cG}$ and horizontal arrows $h_j$ are elements of $\cG_1$.
The final step is chosen so that for
\[n=n_1+n_2+\ldots,\quad m=m_1+m_2+\ldots\]
we have  $\cE(\cR^n(y))\in [M^{-2}\eps,M^{-1}\eps]$, meaning that $\cR^n(y)\in X''_\eps\dans X'_\eps$. This choice is always possible because on the one hand the energy increases when applying $\cR$ (since in real-analytic regularity $\mathcal R$ is expanding, Lemma~\ref{l:keyCw}),
 but on the other it cannot grow too fast. Indeed, set $z=\cR^{n_k+\ldots +n_1}(y)$. The map $\cR^{n_{k+1}}$ coincides with an element of $\widetilde{\cG}$ in restriction to a neighbourhood of $z$ so
\[
\cE(\cR^{n_{k+1}}(z))=(\cR^{n_{k+1}})'(z)\cdot\cE(z)\leq M\cE(z).
\]
We also have
\begin{equation}\label{eq:control_magn}
\cE(\cR^m(g(y)))\in [M^{-3}\eps,\eps].
\end{equation}
Indeed, on the one hand we know that $\cR^{m}g(y)=h\cR^{n}(y)$ and  $\cE(\cR^n(y))\in [M^{-2}\eps,M^{-1}\eps]$; on the other hand we have $M^{-1}\le \|h'\|_0\le M$, by our choice of $M$, therefore from the equality
\[
\cE(\cR^m(g(y)))=\cE(h\cR^{n}(y))=h'(\cR^n(y))\cdot \cE(\cR(y))
\]
we can easily deduce \eqref{eq:control_magn}.
In particular, both $\cR^n(y)$ and $\cR^m(g(y))$ are in $ X'_\eps$. This concludes the proof of the lemma.
\end{proof}

\paragraph{Unbounded components and adjacent sets --} Let $\cC$ be an unbounded connected of complement of $\Sch(X,\widetilde{\cG})\setminus X''$. We let $X_{\cC}$ denote the set of vertices of $X''$ that are adjacent to $\cC$ in $\Sch(X,\widetilde{\cG})$, \textit{i.e.}~the set of vertices $ \partial \cC\cap X''$ in $\Sch(X,\widetilde{\cG})$. Consider also the \emph{augmented adjacent set} defined by
\begin{equation}
\label{eq:augmented_adjacent}
X^+_{\cC}=\overline{\cC}\cap X',
\end{equation}
where $\overline{\cC}=\cC\cup \partial\cC$ denotes the closure of $\cC$ in $\Sch(X,\widetilde{\cG})$.
Note that if $\cC_1$ and $\cC_2$ are disjoint unbounded connected components of $\Sch(X,\widetilde{\cG})\moins X''$ we have
\begin{equation}
\label{eq:intersectouille}
X^+_{\cC_1}\cap X^+_{\cC_2}=X_{\cC_1}\cap X_{\cC_2},
\end{equation}

An adaptation of the proof of Lemma \ref{l:nonc2} gives the following strengthening of that lemma.
\begin{lem}
\label{l:strength}
Let $\cC$ be an unbounded component of $\Sch(X,\widetilde{\cG})\moins X''$. Consider $y\in\cC$ and $g\in\cG_1$ such that $g(y)\in\cC$. 
Then there exist $n,m\in\N$ and $h\in\cG_1$ such that
\[
\cR^n(y), \cR^{m}(g(y))\in X^+_\cC,\text{ and }h(\cR^n(y))=\cR^{m}(g(y)).
\]
\end{lem}

\begin{proof}
Consider the tower defined in the proof of Lemma \ref{l:nonc2}. Note first that by definition of $\cC$ and of the system of generators $\widetilde{\cG}$ we have $\cR^i(y),\cR^j(g(y))\in\cC$ until they eventually enter $X''$. Hence the first time one of these two sequences enters  $X''$, it must belong to $X_{\cC}$.

There are three possibilities. Either the sequence $\cR^{n_k+\ldots+n_1}(y)$ first enters $X''$, or the sequence $\cR^{m_k+\ldots+m_1}(y)$ does so, or the two sequences enter $X''$ at the same time.

If the first possibility occurs, then $\cR^m(g(y))$ belongs to $\cC$ (by the same reasoning given to show \eqref{eq:control_magn}), to $X'$ (by Lemma \ref{l:nonc2}), and the $h$ obtained in Lemma \ref{l:nonc1} belongs to $\cG_1$. In that case, we then have $\cR^n(y)\in X_{\cC}\dans X^+_{\cC}$ and $\cR^m(g(y))\in X^+_{\cC}$.

If the second possibility occurs, say $\cR^{m'}(g(y))\in X_{\cC}$ and $\cR^{n'}(y)\in\cC$ where $n'=n_k+\ldots n_1$ and $m'=m_k+\ldots m_1$, then conclude as in the previous case.

If the third possibility occurs, then the two points $\cR^n(y)$ and $\cR^m(g(y))$ belong to $X_{\cC}$. That concludes the proof of the lemma.
\end{proof}

\subsection{Trivial stabilizers and conclusion}

Observe that any element $g\in G$ fixing a point of the orbit $X$ defines a loop in the Schreier graph (and \textit{viceversa}). By a local ``conjugation'' by a sufficiently large power of the magnification $\cR$, we bring any loop contained in an unbounded connected component of $\Sch(X,\widetilde{\cG})\setminus X''$, to the finite set $X'$.

\begin{lem}\label{l:nonc3}
Let $\cC$ be an unbounded connected component of $\Sch(X,\widetilde{\cG})\setminus X''$.
Consider a vertex $x\in \cC$ and an element $g\in G$ that fixes $x$ and that can be written in the form $g=g_m\cdots g_1$ with respect to the generating set $\mathcal{G}_1$, such that $g_i\cdots g_1(x)\in \cC$ for any $i\le m$.

Then there exists $N\in\N$ and an element $h\in G$ such that the following properties are satisfied:
\begin{enumerate}[\bf i.]
\item there exists a right neighbourhood $U$ of $x$ such that $\cR^{N}g\vert_U=h\cR^N\vert_U$,
\item $\cR^{N}(x)\in X^+_{\cC}$,
\item with respect to the generating system $\mathcal{G}_1$, $h$ can be written in the form $h=s_k\cdots s_1$, with $s_i\cdots s_1(\cR^N(x))\in X^+_{\cC}$ for any $i\le  k$.
\end{enumerate}
\end{lem}

\begin{proof}
%	\note{referee: I don't see why this is necessarily true for the proof, right?}
	We consider the partition $J_1,\ldots,J_p$ of $\T$ for the finite subset $\cG_1\cup \cG_1^{-1}$ given by Theorem~\ref{t:thompsonlike} (see also Convention~\ref{convention}).
Let $x\in\cC$ and $g=g_m\cdots g_1\in G$ be such as in the statement. We let $x_k$ denote the point $g_k\cdots g_1(x)$, which belongs to $\cC$ by hypothesis (we also write $x_0=x_m=x$). We apply Lemma \ref{l:strength} to every $x_{k-1},g_{k}$:
 there exist sequences $n_k,n'_k,i_k$ as well as intervals $J_{i_k}\ni x_{k-1}$ given by the magnification procedure such that
\[\cR^{n'_k}g_k\vert_{J_{i_k}}=h_{i_k}\cR^{n_k}\vert_{J_{i_k}},\]
and $\cR^{n_k}(x_{k-1}),\cR^{n'_{k}}(x_k)\in X^+_{\cC}$. 
We are tempted to locally rewrite $g=g_m\cdots g_1$ in the following way:
\begin{equation}\label{eq:formal}
\cR^{n_1}g\vert_U=\text{``}\left(\cR^{n_1}\cR^{-n_m'}h_{i_m}\cR^{n_m}\cdots\cR^{-n_2'}h_{i_2}\cR^{n_2}\cR^{-n_1'}h_{i_1}\right)\cR^{n_1}\vert_U\text{''},
\end{equation}
for some sufficiently small right neighbourhood $U$ of $x$. The reason for the quotes is that the map $\cR$ is not invertible. Nonetheless, $\cR$ is \emph{locally} invertible, in the sense that for any point $y\in \T$, there exist an element $t\in G$ and a right neighbourhood $V\ni y$ such that $t\vert_V=\cR^{-1}\vert_V$.

For this, we can use the elements $\mathbf g_y\in G$ from Proposition~\ref{Expprocedure}, which locally define the ``maximal expansion'' of a non-expandable point $y\in\NE$. There exists a right neighbourhood $U$ of $x$ such that, writing $U_k=g_k\cdots g_1(U)$ for $k=0,\ldots,m$, then $U_k\subset J_{i_k}$ and
\[\cR^{n_k}\vert_{U_{k-1}}=\mathbf g_{\cR^{n_k}(x_{k-1})}^{-1}\mathbf g_{x_{k-1}}\vert_{U_{k-1}},\quad \cR^{n'_k}\vert_{U_k}=\mathbf g_{\cR^{n'_k}(x_{k})}^{-1}\mathbf g_{x_{k}}\vert_{U_k}.
\]
In the formal equality in \eqref{eq:formal}, we  replace inverse powers  $\cR^{-n'_k}$ by the inverses of the elements representing it.
After some simplifications (and using that $x_m=x_0= x$) we find:
\[ \cR^{n_1}g\vert_U= 
\left (
\mathbf g_{\cR^{n_1}(x_{m})}^{-1} \mathbf g_{\cR^{n'_m}(x_{m})}
h_{i_m}
\mathbf g_{\cR^{n_m}(x_{m-1})}^{-1}
\cdots
 \mathbf g_{\cR^{n'_2}(x_{2})}
h_{i_2}
\mathbf g_{\cR^{n_2}(x_{1})}^{-1} \mathbf g_{\cR^{n'_1}(x_{1})}
h_{i_1}
\right )
\cR^{n_1}\vert_U.
\]
In the expression above, the compositions
\[
t_m = \mathbf g_{\cR^{n_1}(x_{m})}^{-1}\mathbf g_{\cR^{n'_{m}}(x_{m})}
\quad\ldots\quad  t_1=\mathbf g_{\cR^{n_{2}}(x_{1})}^{-1}\mathbf g_{\cR^{n'_{1}}(x_{1})}
\]
correspond to the formal compositions $\cR^{n_{k+1}-n_k'}$ in \eqref{eq:formal} (which makes sense only if $n_{k+1}\ge n_k'$), and we can write $t_k$ as a composition of maps $g_I\in\cG_1$ (if $n_{k+1}\ge n_k'$) or inverses of $g_I$ (otherwise). 
Then writing
\begin{equation}\label{eq:h}
h=
t_m
h_{i_m}
t_{m}-1
h_{i_{m-1}}
\cdots
h_{i_2}
t_1
h_{i_1},
\end{equation}
we clearly have $\cR^{n_1}g\vert_U= 
h
\cR^{n_1}\vert_U$.

We claim that $N=n_1$ and  $h$ as in \eqref{eq:h}
satisfy the conclusions of the lemma. The first property holds by construction. The second one holds by definition of $n_1$. Let us prove that the third one holds as well.

Set $y_{k-1}=\cR^{n_k}(x_{k-1})$ and $z_{k}=h_{i_k}(y_{k-1})=\cR^{n'_k}(x_k)$, so that $t_k = \mathbf g^{-1}_{y_{k}}\mathbf g_{z_{k}}$.
 By construction, we have $y_{k-1},z_{k}\in X^+_{\cC}$. Let us describe the loop based at $y_0$ defined by $h$ in the Schreier graph $\Sch(X,\cG_1)$. It links $y_{k-1}$ to $z_{k}$ via $h_{i_k}\in\cG_1$ and $z_{k}$ to $y_{k}$ via the path corresponding to $t_k$, that is a composition of maps $g_I$ or $g_I^{-1}$. 
We claim that the vertices belonging to such a path are actually in $X^+_{\cC}$. To see this, observe that the element $g_I$ that has to be applied to a vertex of such a path is locally expanding, as it locally defines $\cR$ (Lemma~\ref{l:keyCw}). We deduce that the energy is monotone along such a path. As its two endpoints are in $X'$, all the vertices in this path are in $X'$.

Now, assume without loss of generality that $n_k'<n_{k+1}$ so that $y_{k}$ is the image of $z_{k-1}$ by a positive power of $\cR$. Recall that $y_{k}$ is defined from $x_{k}$ by an inductive process. Let $y_{k}'\in\cC$ be the point obtained by the previous step of the induction, so that it belongs to $\cC$ and is at distance $1$ from $y_{k}$ in $\Sch(X,\widetilde \cG)$. There are two possibilities for a vertex $z$ on the path between $z_{k}$ and $y_{k}$. Either it belongs to $\cC$, and in that case it belongs to $X^+_{\cC}$. Or it belongs to $X''$ and is at distance $1$ from $y_{k}'\in\cC$, so it belongs to $\partial \cC$. In that case again $z\in X^+_{\cC}=\overline{\cC}\cap X''$.
\end{proof}

\paragraph{Simple loops --} Next, we decompose a loop, whose vertices are contained in $X^+_{\cC}$ into finitely many \emph{simple} loops, whose vertices are contained in $X^+_{\cC}$:
\begin{lem}\label{l:nonc4}
Let $h\in G$ be an element fixing a point $y\in X^+_{\cC}$, such that $h$ can be written in the form $h=h_m\cdots h_1$ with respect to the generating system $\mathcal G_1$, and such that
\[
h_i\cdots h_1(y)\in X^+_{\cC}\quad\text{for every }i\le m.
\]
Then there exist elements $\gamma_1,\ldots, \gamma_\alpha$ and $\beta_1,\ldots,\beta_\alpha$ such that the following properties are satisfied:
\begin{enumerate}[\bf i.]
\item $h=\beta^{-1}_\alpha\gamma_\alpha\beta_\alpha\cdots\beta_2^{-1}\gamma_2\beta_2 \beta^{-1}_1\gamma_1\beta_1$,
\item any $\gamma_j$ fixes the point $x_j=\beta_j(y)$ in $X^+_{\cC}$,
\item any $\gamma_j$ can be written in the form $\gamma_j=t_k\cdots t_1$ with respect to the generating system $\cG_1$, in such a way that for any $i=1,\ldots,k$, the points $t_i\cdots t_1(x_j)$ belong to $X^+_{\cC}$ and all are distinct. These are called \emph{simple} loops.
%THE NEXT ITEM IS ALSO TRUE BUT NOT NEEDED
%\item any $\beta_j$ can be written in the form $\gamma_j=t'_{m}\cdots t'_1$ with respect to the generating system $\mathcal{G}$, in such %a way that for any $i=1,\ldots,k$, the points $t'_i\cdots t'_1(y)$ belong to $X_{\cC}$ and all are distinct and different from $y$. %These are called {\em simple chains\/} at $y$.
\end{enumerate}
The maps $\beta^{-1}_i\gamma_i\beta_i$ are called \emph{basic loops} at $y$. Observe also that $k$ depends on $\gamma_j$ but we shall omit this dependence in the notation for the sake of readability.
\end{lem}

\begin{proof}
%As we said, we take the loop based at $y$ representing $h$, whose vertices are contained in $X_{\cC}$, and we decompose it into simple loops, whose vertices are contained in $X_{\cC}$.
The decomposition is obtained by means of an inductive process on the word length $\|h\|$ (with respect to $\cG_1$). If $\|h\|=0$ then $h=id$ and there is nothing to prove. Let us set $y_i=h_i\cdots h_1(y)$ and $y_0=y=y_m$, that belong to $X^+_\cC$ by hypotheses. Let $i_1>1$ be the first index so that $y_{i_1}=y_{j_1}$ for some $0\leq j_1< i_1$. Let $\gamma_1=h_{i_1}\cdots h_{j_1+1}$ and $\beta_1=h_{j_1}\cdots h_1$ (if $j_1\neq 0$ and the identity otherwise). By construction $\gamma_1$ induces a simple loop in $X_\cC$ based at $y_{j_1}=\beta_1(y)$. 
%One of the following situations must hold: either $h=h_m\cdots h_{i_1+1}\beta_1\beta_1^{-1}\gamma_1\beta_1$ or $h$ is itself a simple loop (and $\beta_1$ is the identity). In this later case the result is done.

Observe now that the map $\hat{h}_1=h_m\cdots h_{i_1+1}\beta_1$ satisfies the same hypotheses as $h$ but with $\|\hat h\|<\|h\|$. By the inductive hypothesis, we can write $\hat h=\beta^{-1}_\alpha\gamma_\alpha\beta_\alpha\cdots\beta_2^{-1}\gamma_2\beta_2$, with all the elements satisfying the required statements. As $h=\hat h\, \beta^{-1}_1\gamma_1\beta_1$, this gives the proof.
%
%
%If the length of $\hat{h}_1$ is $0$ then it is the identity map and $h=\beta^{-1}_1\gamma_1\beta_1$ as desired (and $h$ will be a basic loop). In any other case we can apply the same method as above to find $i_2$, $j_2$, $\gamma_2$ and $\beta_2$ so that $\beta_2(y)=y_{j_2}$ is the base point of the simple loop in $X_\cC$ induced by $\gamma_2$ and we have $\hat{h}_1=h_m\cdots h_{i_2+1}\beta_2\beta^{-1}_2\gamma_2\beta_2$ or $\hat{h}_1$ is a simple loop, this latter case trivially implies the result. Combining both decompositions we have $h=h_m\cdots h_{i_2+1}\beta_2\beta^{-1}_2\gamma_2\beta_2\beta^{-1}_1\gamma_1\beta_1$.
%
%By finite induction, it must exist a finite ordinal $\alpha$ so that $\hat{h}_{\alpha+1}=h_m\cdots h_{i_\alpha+1}\beta_\alpha$ is the identity or $\hat{h}_\alpha$ is a simple loop, and both cases give the desired result.
\end{proof}

%{\color{red}Observe that, by construction, the $\beta_i$ induce \emph{simple chains} at $y$, i.e., $\beta_i=t'_{k}\cdots t'_{1}$ with $t'_j\in\cG$ (where $k$ and the $t'_j$'s dependending on $\beta_i$) so that for all $0\leq i\leq k$ the points $t'_i\cdots t'_1(y)\in X_\cC$ are all distinct (recall $i=0$ means just $y$). This interesting fact will not be necessary for our purposes.}

\paragraph{Proof of Proposition \ref{p:nonc1} --} 
We will prove that, for an appropriate choice of $\eps>0$, the finite set $Y=X''_\eps$ defined as in \eqref{eq:Xsegeps} satisfies the required properties.

First of all, recall that we are working with different generating sets, for which we have the inclusions $\cG\subset \cG_1 \subset \widetilde{\cG}$. Hence $\Sch(X,\cG)$ is naturally a subgraph of $\Sch(X,\cG')$, which is on its turn a subgraph of $\Sch(X,\widetilde{\cG})$. In particular, a connected component of $\Sch(X,\widetilde{\cG})\setminus Y$ is the disjoint union (modulo extra edges) of connected components of $\Sch(X,\cG_1)\setminus Y$ and hence of $\Sch(X,\cG)\setminus Y$ (see Remark \ref{r:increased_system}). Therefore it is enough to prove the statement for the Schreier graph $\Sch(X,\widetilde{\cG})$.
 
By Theorem~\ref{t:duminy}, the Schreier graph $\Sch(X,\widetilde{\cG})$ has infinitely many ends. 
We take $\delta>0$ given by Lemma~\ref{l:nonc1}.
% Using Lemma \ref{l:ccXeps}, for every $K\in\N$ there exists $\eps>0$, which can be chosen arbitrarily small, such that in the complement of the set $X''=X''_\eps$,
%% defined as in \eqref{eq:Xeps}, 
%there are at least $K$ unbounded connected components. As before, for any such component $\cC$ we denote by $X_{\cC}$ the subset of $X''$ that is adjacent to $\cC$.
Using Lemma \ref{l:ccXeps}, for every $K$ there exists $\eps>0$, which can be chosen arbitrarily small, such that in the complement of the set $X''_\eps$ inside $\Sch(X,\widetilde{\cG})$, there are at least $K$ unbounded connected components. As above, for any such component $\cC$ we let $X_{\cC}$ denote the subset of $X''_\eps$ that is adjacent to it, and $X^+_{\cC}\dans X'_\eps$ the associated augmented adjacent set defined by \eqref{eq:augmented_adjacent}.

Now using Lemma \ref{l:macroucrouille} we have that $\sum_{x\in X'_\eps}\cE(x)\leq C_0$. Moreover a point $x$ can be adjacent to at most $\kappa=|\widetilde{\cG}\cup \widetilde{\cG}|$ disjoint connected components of $\Sch(X,\widetilde{\cG})\moins X''_\eps$. Finally, for two disjoint connected components $\cC_1,\cC_2$, we have $X^+_{\cC_1}\cap X^+_{\cC_2}=X_{\cC_1}\cap X_{\cC_2}$ (see \ref{eq:intersectouille}). Hence we deduce that for at most $\left \lfloor\frac{\kappa C_0}{\delta}\right \rfloor$ connected components $\cC$, the sum of energies $S_{\cC}$ satisfies
\[
S_{\cC}:=\sum_{x\in X^+_{\cC}}\cE(x)\ge\delta.
\]
Take $K> \frac{\kappa C_0}{\delta}$ and $\eps<\delta$ such that $\Sch(X,\widetilde{\cG})\moins X''_\eps$ has at least $K$ unbounded components. In this way we ensure at least one unbounded connected component $\widetilde{\cC}$ of $\Sch(X,\widetilde{\cG})\setminus X''_\eps$ satisfying $S_{\widetilde{\cC}}<\delta$.
%\emr{We show that with this condition, the component $\widetilde{\cC}$ contains an unbounded connected component of $\Sch(X,\cG)\moins X''_\eps$ that satisfies the requirements in the statement.}

\smallskip

%\emr{Consider now $\cC\dans\widetilde{\cC}$ an unbounded component of $\Sch(X,\cG)\moins X''_\eps$ and
Take $x\in{\widetilde{\cC}}$ and $g\in \stab{G}{x}$ as in the statement.
Using Lemma \ref{l:nonc3}, we find a power $N$ of $\cR$ that locally conjugates $g$ to an element $h$ that fixes a point $y$ in $X^+_{\widetilde{\cC}}$ and defines a loop whose vertices are contained in $X^+_{\widetilde{\cC}}$.

By applying Lemma~\ref{l:nonc4}, we decompose $h$ into a product $h=\beta_\alpha^{-1}\gamma_\alpha\beta_\alpha\cdots \beta_1^{-1}\gamma_1\beta_1$, with every $\gamma_i$ defining a simple loop  whose vertices are in $X^+_{\widetilde{\cC}}$. Therefore there exists a right neighbourhood $U$ of $x$ such that we have the decomposition
\begin{equation}\label{eq:nonc3}
\cR^{N}g\vert_U=\beta_\alpha^{-1}\gamma_\alpha\beta_\alpha\cdots \beta_1^{-1}\gamma_1\beta_1\cR^N\vert_U.
\end{equation}
The loops defined by the $\gamma_j$'s are simple: we can write $\gamma_j=t_k\cdots t_1$, such that all the points $t_i\cdots t_1(x_j)$ belong to $X^+_{\widetilde{\cC}}$ and all are distinct, where $x_j=\beta_j(y)$. Hence we have the upper bound
\[
\sum_{i=0}^{k-1}\cE(t_i\cdots t_1(x_j))\le S_{\widetilde{\cC}}<\delta,
\]
Then Lemma~\ref{l:nonc1} implies that the $\gamma_j$'s are trivial. We get that the decomposition \eqref{eq:nonc3} equals $\cR^Ng\vert_U = \cR^N\vert_U$. Hence also $g$ is trivial, as desired.
The proof of Proposition~\ref{p:nonc1} is now over, and with that also that of Theorem~\ref{t:ends_germs}. 

%%%%%%%%%%%%%%%%%%%%%%%%%%%%%%%%%%%%

\section*{Acknowledgements}
The authors wish to thank Pablo Barrientos and Artem Raibekas for having taken active part in the process of understanding most of the background material, during workshop sessions at UFF and PUC in 2014.
M.T. acknowledges the hospitality of USACH and the discussions with A.N. and Crist\'obal Rivas around Ghys' Theorem during the visit in December 2014.
This work was carried on during the visit of D.F., V.K. and A.N. to PUC in Rio de Janeiro in January 2015.
The presentation of the paper has been largely improved after the generous suggestions of the referees.

\smallskip

S.A., C.M., D.M. and M.T. were supported by a post-doctoral grant financed by CAPES.
M.T. was supported by PEPS -- Jeunes Chercheur-e-s -- 2017 (CNRS).
S.A. was supported by the project \emph{Geometric theory of dynamical systems and France-Brazil cooperation in mathematics}, sponsored by Marcelo Viana's prix Louis D, as well by the Université de Bourgogne.
C.M. was supported by Fundaci\'on Barri\'e de la Maza, post-doctoral fellow 2012 and MICINN, Grant MTM2014-56950-P (2014--2017) (Spain).
D.F. and V.K. were partially supported by the RFBR projects 13-01-00969-a and 16-01-00748-a and 
by the project CSF of CAPES.
V.K. was partially supported by the R\'eseau France-Br\'esil en Math\'ematiques.
A.N. was supported by the Anillo 1103 Research Project DySyRF and its compagnion Project REDES 140138. 

\bibliographystyle{plain}
\def\cprime{$'$}
% \bib, bibdiv, biblist are defined by the amsrefs package.
\begin{bibdiv}
\begin{biblist}

\bib{markov-partition-vfree}{article}{
	author={Alvarez, S.},
	author={Barrientos, P.},
	author={Filimonov, D.},
	author={Kleptsyn, V.},
	author={Malicet, D.},
	author={Meni\~no, C.},
	author={Triestino, M.},
	title={Maskit partitions and locally discrete groups of real-analytic circle diffeomorphisms},
	note={In preparation},
}

\bib{baumslag}{book}{
	author={Baumslag, G.},
	title={Topics in Combinatorial Group Theory},
	year={1993},
	series={Lectures in Mathematics ETH Z\"urich},
	publisher={Birkh\"auser Verlag, Basel},
}

\bib{bergman}{article}{
	author={Bergman, G.M.},
	title={On groups acting on locally finite graphs},
	journal={Ann. of Math. (2)},
	volume={88},
	date={1968},
	pages={335--340},
	issn={0003-486X},
%	review={\MR{0228574}},
%	doi={10.2307/1970578},
}

\bib{bridson-haefliger}{book}{
      author={Bridson, M.R.},
      author={Haefliger, A.},
       title={Metric spaces of non-positive curvature},
      series={Grundlehren Math. Wiss.},
   publisher={Springer-Verlag, Berlin},
        date={1999},
      volume={319},
}

\bib{CC-Duminy}{incollection}{
	author={Cantwell, J.},
	author={Conlon, L.},
	title={Endsets of exceptional leaves; a theorem of G.~Duminy},
	year={2002},
	booktitle={Proceedings of Foliations: Geometry and Dynamics, Warsaw 2000},
	pages={225\ndash 261}
}

\bib{culler-morgan}{article}{
	author={Culler, M.},
	author={Morgan, J.W.},
	title={Group actions on $\mathbb R$-trees},
	year={1987},
	journal={Proc. London Math. Soc. (3)},
	volume={55},
	pages={571\ndash 604}
}

\bib{dambra-gromov}{article}{
	author={D'Ambra, G.},
	author={Gromov, M.},	
	title={Lectures on the transformation groups: geometry and dynamics},
	journal={J. Differential Geom.},
	volume={Suppl. 1},
	year={1991},
	pages={19\ndash 111}	
}

\bib{denjoy}{article}{
      author={Denjoy, A.},
       title={Sur les courbes d\'efinies par les \'equations diff\'erentielles
  \`a la surface du tore},
        date={1932},
     journal={J. Math. Pures Appl.},
      volume={9},
      number={11},
       pages={333–375},
}

\bib{Deroin}{article}{
	author = {Deroin, B.},
	title = {Locally discrete expanding groups of analytic diffeomorphisms of the circle},
	eprint = {arXiv:1811.10298},
}

\bib{tokyo}{article}{
   author={Deroin, B.},
      author={Filimonov, D.A.},
      author={Kleptsyn, V.A.},
      author={Navas, A.},
   title={A paradigm for codimension one foliations},
   conference={
      title={Geometry, dynamics, and foliations 2013},
   },
   book={
      series={Adv. Stud. Pure Math.},
      volume={72},
      publisher={Math. Soc. Japan, Tokyo},
   },
   date={2017},
   pages={59\ndash 69},
 %  review={\MR{3726705}},
}

\bib{DKN2007}{article}{
      author={Deroin, B.},
      author={Kleptsyn, V.A.},
      author={Navas, A.},
       title={Sur la dynamique unidimensionnelle en r\'egularit\'e
  interm\'ediaire},
        date={2007},
     journal={Acta Math.},
      volume={199},
      number={2},
       pages={199\ndash 262},
}

\bib{DKN2009}{article}{
      author={Deroin, B.},
      author={Kleptsyn, V.A.},
      author={Navas, A.},
       title={On the question of ergodicity for minimal group actions on the
  circle},
        date={2009},
     journal={Mosc. Math. J.},
      volume={9},
      number={2},
       pages={263\ndash 303},
}

\bib{DKN2014}{article}{
      author={Deroin, B.},
      author={Kleptsyn, V.A.},
      author={Navas, A.},
       title={On the ergodic theory of free group actions by real-analytic circle diffeomorphisms},
     journal={Invent. Math.},
     year ={2018},
     Volume = {212},
     number ={3},
     pages ={731\ndash 779},
}

\bib{dippolito}{article}{
	author={Dippolito, P.R.},
	title={Codimension one foliations of closed manifolds},
	journal={Ann. Math. (2)},
	volume={107},
	year={1978},
	pages={403\ndash 453},
}

\bib{dunwoody1}{article}{
	author={Dunwoody, M.J.},
	title={The accessibility of finitely presented groups},
	journal={Invent. Math.},
	volume={81},
	year={1985},
	pages={449\ndash 457}
}

\bib{dunwoody2}{incollection}{
	author={Dunwoody, M.J.},
	title={An inaccessible group},
	date={1993},
	booktitle={Geometric {G}roup {T}heory, {V}olume 1, {S}ussex 1991},
	publisher={Cambridge Univ. Press},
	pages={75\ndash 78}
}

\bib{conf}{book}{
      author={Eliashberg, Y.M.},
      author={Thurston, W.P.},
       title={Confoliations},
      series={Univ. Lect. Series},
   publisher={Amer. Math. Soc., Providence, RI},
        date={1998},
      volume={13},
}

\bib{shcherbakov}{incollection}{
      author={Elizarov, P.M.},
      author={Il{\cprime}yashenko, Yu.S.},
      author={Shcherbakov, A.A.},
      author={Voronin, S.M.},
       title={Finitely generated groups of germs of one-dimensional conformal
  mappings, and invariants for complex singular points of analytic foliations
  of the complex plane},
        date={1993},
   booktitle={Nonlinear {S}tokes phenomena},
      series={Adv. Soviet Math.},
      volume={14},
   publisher={Amer. Math. Soc., Providence, RI},
       pages={57\ndash 105},
}

\bib{eskif-rebelo}{article}{
      author={Eskif, A.},
      author={Rebelo, J.C.},
       title={Global rigidity of conjugations for locally non-discrete subgroups of $\Diff^\omega(S^1)$},
        date={2015},
     journal={preprint, [ArXiv 1507.03855]},
}

\bib{FK2012_C_eng}{article}{
      author={Filimonov, D.A.},
      author={Kleptsyn, V.A.},
       title={Structure of groups of circle diffeomorphisms with the property
  of fixing nonexpandable points},
        date={2012},
     journal={Funct. Anal. Appl.},
      volume={46},
      number={3},
       pages={191\ndash 209},
}

\bib{FKone}{article}{
      author={Filimonov, D.A.},
      author={Kleptsyn, V.A.},
       title={One-end finitely presented groups acting on the circle},
        date={2014},
     journal={Nonlinearity},
      volume={27},
      number={6},
       pages={1205\ndash 1223},
}

\bib{thompson}{article}{
	author={Ghys, {\'E}.},
	author={Sergiescu, V.},
       title={Sur un groupe remarquable de diff\'eomorphismes du cercle},
        date={1987},
     journal={Comment. Math. Helv.},
      volume={62},
       pages={185\ndash 239},
}

\bib{euler}{article}{
      author={Ghys, {\'E}.},
       title={Classe d'{E}uler et minimal exceptionnel},
        date={1987},
     journal={Topology},
      volume={26},
      number={1},
       pages={93\ndash 105},
}

\bib{Ghys1993}{article}{
      author={Ghys, {\'E}.},
       title={Rigidit\'e diff\'erentiable des groupes fuchsiens},
        date={1993},
     journal={Publ. Math. IH\'ES},
      number={78},
       pages={163\ndash 185 (1994)},
}

\bib{proches}{article}{
      author={Ghys, {\'E}.},
       title={Sur les groupes engendr\'es par des diff\'eomorphismes proches de
  l'identit\'e},
        date={1993},
     journal={Bol. Soc. Brasil. Mat. (N.S.)},
      volume={24},
      number={2},
       pages={137\ndash 178},
}

\bib{goldman1}{article}{
	author={Goldman, W.},
	title={Discontinuous groups and {E}uler class},
	journal={Ph.D. thesis, Univ. of California, Berkeley},
	year={1980}
}

\bib{goldman2}{article}{
	author={Goldman, W.},
	title={Geometric structures on manifolds},
	journal={Unpublished notes},
	year={1988},
	note={Available at \url{http://www.math.umd.edu/~wmg/gstom.pdf}}
}

\bib{herman}{article}{
      author={Herman, M.-R.},
       title={Sur la conjugaison diff\'erentiable des diff\'eomorphismes du
  cercle \`a des rotations},
        date={1979},
     journal={Publ. Math. IH\'ES},
      number={49},
       pages={5\ndash 233},
}

\bib{virtually_free}{article}{
      author={Karrass, A.},
      author={Pietrowski, A.},
      author={Solitar, D.},
       title={Finite and infinite cyclic extensions of free groups},
        date={1973},
     journal={J. Austral. Math. Soc.},
      volume={16},
       pages={458\ndash 466},
        note={Collection of articles dedicated to the memory of Hanna Neumann,
  IV},
}

\bib{KH}{book}{ 
author={Katok, A.},
author={Hassenblatt, B.},
       title={Introduction to the Modern Theory of Dynamical Systems},
      series={Encyclopedia of Mathematics and its Applications},
   publisher={Cambridge University Press},
        date={1997},
}

\bib{kuiper}{article}{
	author={Kuiper, N.},
	title={Locally projective spaces of dimension one},
	journal={Michigan Math. J.},
	volume={2},
	year={1954},
	pages={95\ndash 97}
}

\bib{loray-rebelo}{article}{
      author={Loray, F.},
      author={Rebelo, J.C.},
       title={Minimal, rigid foliations by curves on {$\mathbb{C}\mathbb{P}^n$}},
        date={2003},
     journal={J. Eur. Math. Soc. (JEMS)},
      volume={5},
      number={2},
       pages={147\ndash 201},
}

\bib{combinatorial}{book}{
      author={Magnus, W.},
      author={Karrass, A.},
      author={Solitar, D.},
       title={Combinatorial group theory},
     edition={revised},
   publisher={Dover Publications, Inc., New York},
        date={1976},
        note={Presentations of groups in terms of generators and relations},
}

\bib{Matsuda2009}{article}{
      author={Matsuda, Y.},
       title={Groups of real analytic diffeomorphisms of the circle with a
  finite image under the rotation number function},
        date={2009},
     journal={Ann. Inst. Fourier (Grenoble)},
      volume={59},
      number={5},
       pages={1819\ndash 1845},
         url={http://eudml.org/doc/10441},
}

\bib{nakai}{article}{
      author={Nakai, I.},
       title={Separatrices for nonsolvable dynamics on {$\mathbf{C},0$}},
        date={1994},
     journal={Ann. Inst. Fourier (Grenoble)},
      volume={44},
      number={2},
       pages={569\ndash 599},
}

\bib{navas2006}{article}{
      author={Navas, A.},
       title={On uniformly quasisymmetric groups of circle diffeomorphisms},
        date={2006},
     journal={Ann. Acad. Sci. Fenn. Math.},
      volume={31},
      number={2},
       pages={437\ndash 462},
}

\bib{Navas2011}{book}{
      author={Navas, A.},
       title={Groups of circle diffeomorphisms},
      series={Chicago Lectures in Mathematics},
   publisher={University of Chicago Press, Chicago, IL},
        date={2011},
}

\bib{NT}{article}{
	author={Navas, A.},
	author={Triestino, M.},
       title={On the invariant distributions of $C^2$ circle diffeomorphisms of irrational rotation number},
      journal={Math. Z.},
      volume={274},
      number={1},
       pages={315\ndash 321},
       date={2013},
}

\bib{rebelo-ergodic}{article}{
      author={Rebelo, J.C.},
       title={Ergodicity and rigidity for certain subgroups of {$\mathrm{Diff}^\infty_+(S^1)$}},
        date={1999},
     journal={Ann. Sci. \'{E}cole Norm. Sup. (4)},
      volume={32},
      number={4},
       pages={433\ndash 453},
}

\bib{rebelo-transitive}{article}{
      author={Rebelo, J.C.},
       title={Subgroups of {$\mathrm{Diff}^\infty_+(\mathbb{S}^1)$} acting
  transitively on 4-tuples},
        date={2004},
     journal={Trans. Amer. Math. Soc.},
      volume={356},
      number={11},
       pages={4543\ndash 4557 (electronic)},
}

\bib{Sacksteder}{article}{
      author={Sacksteder, R.},
       title={Foliations and pseudogroups},
        date={1965},
     journal={Amer. J. Math.},
      volume={87},
       pages={79\ndash 102},
}

\bib{Schwartz}{article}{
	author={Schwartz, A.J.},
	title={A generalization of a Poincar\'e-Bendixson theorem to closed
		two-dimensional manifolds},
	journal={Amer. J. Math. 85 (1963), 453-458; errata, ibid},
	volume={85},
	date={1963},
	pages={753},
	issn={0002-9327},
%	review={\MR{0155061}},
}

\bib{Schweitzer}{article}{
	author = {Schweitzer, P.A.},
	title = {Some problems in foliation theory and related areas},
	journal = {Lecture Notes in Mathematics},
	volume = {652},
	note ={Springer-Verlag, New York},
	year = {1978},
	pages = {240--252}
}

\bib{serre}{book}{
      author={Serre, J.-P.},
       title={Arbres, amalgames, {$\mathrm{SL}_{2}$}},
   publisher={Soci\'et\'e Math\'ematique de France, Paris},
        date={1977},
        note={Avec un sommaire anglais, R{\'e}dig{\'e} avec la collaboration de
  Hyman Bass, Ast{\'e}risque, No. 46},
}

\bib{SS}{article}{
      author={Shub, M.},
      author={Sullivan, D.},
       title={Expanding endomorphisms of the circle revisited},
        date={1985},
     journal={Erg. Theory and Dynam. Systems},
      volume={5},
      number={2},
       pages={285\ndash 289},
}

\bib{stallings}{article}{
	author={Stallings, J.R.},
	title={On torsion-free groups with infinitely many ends},
	journal={Ann. of Math. (2)},
	volume={88},
	date={1968},
	pages={312--334},
	issn={0003-486X},
%	review={\MR{0228573}},
%	doi={10.2307/1970577},
}

\bib{sullivan}{article}{
	author={Sullivan, D.},
	title={Conformal dynamical systems},
	conference={
		title={Geometric dynamics},
		address={Rio de Janeiro},
		date={1981},
	},
	book={
		series={Lecture Notes in Math.},
		volume={1007},
		publisher={Springer, Berlin},
	},
	date={1983},
	pages={725--752},
%	review={\MR{730296}},
%	doi={10.1007/BFb0061443},
}

\end{biblist}
\end{bibdiv}

\begin{flushleft}
{\scshape S\'ebastien Alvarez}\\
CMAT, Facultad de Ciencias, Universidad de la Rep\'ublica\\
Igua 4225 esq. Mataojo. Montevideo, Uruguay.\\
email: \texttt{salvarez@cmat.edu.uy}

\smallskip

{\scshape Dmitry Filimonov}\\
National Research University Higher School of Economics (HSE)\\
20 Myasnitskaya ulitsa,
101000 Moscow, Russia\\
email: \texttt{mityafil@gmail.com}

\smallskip

{\scshape Victor Kleptsyn}\\
CNRS, Institut de R\'echerche Math\'ematique de Rennes (IRMAR, UMR 6625)\\
B\^at. 22-23, Campus Beaulieu,
263 avenue du G\'en\'eral Leclerc,
35042 Rennes, France\\
email: \texttt{victor.kleptsyn@univ-rennes1.fr}

\smallskip

{\scshape Dominique Malicet}\\
Universidade Federal Fluminense (UFF)\\
Rua Prof. Marcos Waldemar de Freitas Reis, S/N -- Bloco H, 4o Andar\\
Campus do Gragoatá, Niterói, Rio de Janeiro 24210-201, Brasil\\
email: \texttt{dominique.malicet@crans.org}

\smallskip

{\scshape Carlos Meni\~no Cot\'on}\\
Rua Prof. Marcos Waldemar de Freitas Reis, S/N -- Bloco H, 4o Andar\\
Campus do Gragoatá, Niterói, Rio de Janeiro 24210-201, Brasil\\
email:  \texttt{carlos\_menino@id.uff.br}

\smallskip

{\scshape Andr\'es Navas}\\
Universidad de Santiago de Chile (USACH)\\
Alameda 3363, Estaci\'on Central, Santiago, Chile\\
email: \texttt{andres.navas@usach.cl}

\smallskip

{\scshape Michele Triestino}\\
Institut de Math\'ematiques de Bourgogne (IMB, UMR 5584)\\
9 av.~Alain Savary, 21000 Dijon, France\\
email: \texttt{michele.triestino@u-bourgogne.fr}

\end{flushleft}

\end{document}